\newtheorem{theorem}{Theorem}
\newtheorem{proposition}[theorem]{Proposition}
\newtheorem{lemma}[theorem]{Lemma}
\newtheorem{corollary}[theorem]{Corollary}
\newtheorem*{theorem*}{Theorem}
\theoremstyle{definition}
\newtheorem{definition}{Definition}
\newtheorem{remark}[theorem]{Remark}
\def\Xint#1{\mathchoice
{\XXint\displaystyle\textstyle{#1}}%
{\XXint\textstyle\scriptstyle{#1}}%
{\XXint\scriptstyle\scriptscriptstyle{#1}}%
{\XXint\scriptscriptstyle\scriptscriptstyle{#1}}%
\!\int}
\def\XXint#1#2#3{{\setbox0=\hbox{$#1{#2#3}{\int}$ }
\vcenter{\hbox{$#2#3$ }}\kern-.6\wd0}}
\def\dashint{\Xint-}
\definecolor{Yellow}{rgb}{0.95,0.9,0.0} 
\definecolor{Red}{rgb}{0.8,0.1,0.1}
\definecolor{Green}{rgb}{0.1,0.65,0.2}
\definecolor{Blue}{rgb}{0.1,0.1,0.8}
\definecolor{Purple}{rgb}{0.7,0.1,0.7}
\definecolor{Grey}{rgb}{0.6,0.6,0.6}
\newcommand{\supp}{\operatorname{supp}}
\newcommand{\dist}{\operatorname{dist}}
\DeclareMathOperator*{\argmin}{arg\,min}
\newcommand{\Rd}[1][d]{{\mathbb{R}^{#1}}}
\newcommand{\dH}{d\mathcal{H}^{d-1}}
\newcommand{\Sph}{\mathbb{S}^{d-1}}
\newcommand{\ddt}{\frac{d}{dt}}
\newcommand{\p}{\partial}
\renewcommand{\vec}[1]{{\operatorname{#1}}}
\newcommand{\n}{\vec{n}}
\newcommand {\jump}[1] {[\![ #1 ]\!]}
\newcommand{\wkto}{\rightharpoonup}
\newcommand{\wksto}{\overset{*}{\rightharpoonup}}
\def\Xint#1{\mathchoice
{\XXint\displaystyle\textstyle{#1}}%
{\XXint\textstyle\scriptstyle{#1}}%
{\XXint\scriptstyle\scriptscriptstyle{#1}}%
{\XXint\scriptscriptstyle\scriptscriptstyle{#1}}%
\!\int}
\def\XXint#1#2#3{{\setbox0=\hbox{$#1{#2#3}{\int}$ }
\vcenter{\hbox{$#2#3$ }}\kern-.6\wd0}}
\def\intbar{\Xint-}
\begin{document}

\title[Weak solutions of Mullins--Sekerka flow as gradient flows]
{Weak solutions of Mullins--Sekerka flow as a Hilbert space gradient flow}

\author{Sebastian Hensel}
\address{Hausdorff Center for Mathematics, Universit{\"a}t Bonn, Endenicher Allee 62, 53115 Bonn, Germany}
\email{sebastian.hensel@hcm.uni-bonn.de}

\author{Kerrek Stinson}
\address{Hausdorff Center for Mathematics, Universit{\"a}t Bonn, Endenicher Allee 62, 53115 Bonn, Germany}
\email{kerrek.stinson@hcm.uni-bonn.de}

\begin{abstract} 
We propose a novel weak solution
theory for the Mullins--Sekerka equation primarily motivated from a gradient flow perspective.
Previous existence results on weak solutions due to Luckhaus and Sturzenhecker
(Calc.\ Var.\ PDE~3, 1995) or R\"{o}ger ({SIAM} J. Math. Anal.~37, 2005)
left open the inclusion of both a sharp energy dissipation principle and
a weak formulation of the contact angle at the intersection of the interface
and the domain boundary. To incorporate these, we introduce a functional framework encoding a weak
solution concept for Mullins--Sekerka flow essentially relying only on
\textit{i)}~a single sharp energy dissipation inequality in the
spirit of De~Giorgi, and \textit{ii)}~a weak formulation 
for an arbitrary fixed contact angle through a distributional
representation of the first variation of the underlying capillary energy.
Both ingredients are intrinsic to the interface of the evolving phase indicator
and an explicit distributional PDE formulation with potentials can be derived from them.
Existence of weak solutions is established via subsequential
limit points of the naturally associated minimizing movements scheme. 
Smooth solutions are consistent with the classical Mullins--Sekerka flow, 
and even further, we expect our solution concept to be amenable, at least in principle,
to the recently developed relative entropy approach for curvature driven interface evolution.

\medskip
\noindent \textbf{Keywords:} Mullins--Sekerka flow, gradient flows, weak solutions,
energy dissipation inequality, De~Giorgi metric slope, contact angle, Young's law

\medskip
\noindent \textbf{Mathematical Subject Classification}: 35D30, 49J27, 49Q20, 53E10

\end{abstract}

%\keywords{}

\maketitle
\vspace*{-.7em}\tableofcontents

\section{Introduction}

\subsection{Context and motivation}
The purpose of this paper is to develop the gradient flow perspective for the Mullins--Sekerka equation
at the level of a weak solution theory. 
The Mullins--Sekerka equation is a curvature driven evolution equation for a mass preserved quantity,
see~\eqref{eq:MullinsSekerka1}--\eqref{eq:MullinsSekerka6} below.
The ground breaking results of the early 90s showed that when strong solutions exist, this equation is in fact the sharp interface limit of the 
Cahn--Hilliard equation, a fourth order diffuse interface model for phase separation in materials \cite{Alikakos1994} (see also, e.g., \cite{batesFife1993}, \cite{Pego1989FrontMI}). 
However, as for mean curvature flows, one of the critical challenges in studying such sharp interface 
models is the existence of solutions after topological change. As a result, 
many different weak solution concepts
have been developed, and in analogy with the development of weak solution theories for PDEs and the 
introduction of weak function spaces, a variety of weak notions of smooth surfaces have been 
applied for solution concepts. In the case that a surface arises as the common boundary of 
two sets (i.e., an interface), a powerful solution concept has been the $BV$~solution,
first developed for the Mullins--Sekerka flow in the seminal work 
of Luckhaus and Sturzenhecker~\cite{LucStu}. 

For $BV$~solutions in the sense of~\cite{LucStu}, the evolving phase
is represented by a time-dependent family of characteristic functions which are of bounded variation.
Furthermore, both the evolution equation for the phase and the Gibbs--Thomson law are
satisfied in a distributional form. The corresponding existence result for such solutions crucially
leverages the well-known fact that the Mullins--Sekerka flow can be formally
obtained as an $H^{-1}$-type gradient flow of the perimeter functional (see, e.g., \cite{Garcke2013}). 
Indeed, $BV$~solutions for the Mullins--Sekerka
flow are constructed in~\cite{LucStu} as subsequential limit points of the associated minimizing
movements scheme. However, due to the discontinuity of the first variation of the perimeter functional with respect to
weak-$*$ convergence in~$BV$, Luckhaus and Sturzenhecker~\cite{LucStu} relied on the
additional assumption of convergence of the perimeters in order to obtain a $BV$~solution
in their sense. Based on geometric measure theoretic results of Sch\"{a}tzle~\cite{Schaetzle2001} and a pointwise interpretation of the Gibbs--Thomson law in terms of a generalized curvature intrinsic to the interface~\cite{Roeger2004}, R\"{o}ger~\cite{Roeger2005} was 
later able to remove the energy convergence assumption (see also \cite{AbelsRoeger}).

However, the existence results of Luckhaus and Sturzenhecker~\cite{LucStu}
and R\"{o}ger~\cite{Roeger2005} still leave two fundamental questions unanswered. First, both weak
formulations of the Gibbs--Thomson law do not encompass a weak formulation
for the boundary condition of the interface where it intersects the domain boundary. For instance, if the energy is proportional to the surface area of the interface, one expects a constant
ninety degree contact angle condition at the intersection points, which quantitatively accounts for the fact that minimizing energy in the bulk, the surface will travel the shortest path to the boundary. 
Second, neither of the two works establishes a sharp energy dissipation
principle, which, because of the formal gradient flow
structure of the Mullins--Sekerka equation, is a natural ingredient
for a weak solution concept as we will further discuss below. 
A second motivation to prove a sharp energy dissipation inequality
stems from its crucial role in the recent progress concerning weak-strong
uniqueness principles for curvature driven interface evolution problems
(see, e.g., \cite{Fischer2020c}, \cite{Fischer2020a} or~\cite{Hensel2021l}).

Turning to approximations of the Mullins--Sekerka flow via the Cahn--Hilliard equation
Chen \cite{Chen} introduced an alternative weak solution concept, which does include an energy dissipation inequality. To prove existence, Chen developed powerful estimates (that have been used in numerous applications, e.g., \cite{abelsLengeler2014}, \cite{AbelsRoeger}, \cite{melchionnaRocca2017}) to control the sign of the discrepency measure, an object which captures the distance of a solution from an equipartition of energy. Critically these estimates do not rely on the maximum principle and are applicable to the fourth-order Cahn--Hilliard equation. However, in contrast to Ilmanen's proof for the convergence of the Allen--Cahn equation to 
mean curvature flow~\cite{ilmanen}, where the discrepancy vanishes in the limit, Chen is restricted to proving non-positivity in the limit. As a result, the proposed solution concept requires a varifold lifting of the energy for the dissipation inequality and a modified varifold for the Gibbs--Thomson relation. In the interior of the domain, the modified Gibbs--Thomson relation no longer implies the pointwise interpretation of the evolving surface's curvature in terms of the trace of the chemical potential and, on the boundary, cannot account for the contact angle. Further, Chen's solution concept does not use the optimal dissipation inequality to capture the full dynamics of the gradient flow. 

Looking to apply the framework of evolutionary Gamma-convergence 
developed by Sandier and Serfaty~\cite{Sandier2004} to the convergence of the Cahn--Hilliard equation, 
Le~\cite{Le2008} introduces a gradient flow solution concept for the Mullins--Sekerka equation, which principally relies on an optimal dissipation inequality. However, interpretation of the limiting interface as a solution in this sense requires that the surface is regular and does not intersect the domain boundary, i.e., there is no contact angle. 
As noted by Serfaty~\cite{Serfaty2011}, though the result of Le~\cite{Le2008} sheds light 
on the gradient flow structure of the Mullins--Sekerka flow in a smooth setting, 
it is of interest to develop a general framework for viewing solutions of the Mullins--Sekerka flow 
as curves of maximal slope even on the level of a weak solution theory. 
This is one of the primary contributions of the present work.  

%We introduce and prove existence for a novel weak solution
%concept for Mullins-Sekerka flow which is based on ideas from the theory
%of gradient flows. 
Though still in the spirit
of the earlier works by Le \cite{Le2008}, Luckhaus and Sturzenhecker~\cite{LucStu},
and R\"{o}ger~\cite{Roeger2005}, the solution concept we introduce includes both
a weak formulation for the constant contact angle and a sharp energy
dissipation principle. The boundary condition for the interface is 
in fact not only implemented %at the level of the Gibbs--Thomson law 
for a constant contact angle $\alpha = \smash{\frac{\pi}{2}}$ but even 
for general constant contact angles $\alpha \in (0,\pi)$.
For the formulation of the energy dissipation inequality, we exploit
a gradient flow perspective encoded
in terms of a De~Giorgi type inequality. Recall to this end that for smooth
gradient flows, the gradient flow equation $\dot{u} = - \nabla E[u]$
can equivalently be represented by the inequality 
$$E[u(T)] + \int_{0}^T \frac{1}{2}|\dot{u}(t)|^2 
+ \frac{1}{2}|\nabla E[u(t)]|^2 \,dt \leq E[u(0)]$$
(for a discussion of gradient flows and their solution concepts in further detail see Subsection \ref{subsec:SandierSerfaty}).
%The merit of representing the gradient flow dynamics through the previous inequality
%is that it is much better suited to handle the typically nonlinear nature of the gradient flow equation
%in the context of weak compactness and lower-semicontinuity arguments.
Representation of gradient flow dynamics through the above dissipation inequality allows one to generalize to the weak setting and is often amenable to typical variational machinery such as weak compactness and lower semi-continuity.

The main conceptual contribution of this work consists of the introduction of
a functional framework for which a weak solution
of the Mullins--Sekerka flow is essentially characterized through only \textit{i)} a single sharp
energy dissipation inequality,
and \textit{ii)} a weak formulation for the contact angle condition
in the form of a suitable distributional representation of the first variation of the energy.
We emphasize that both these ingredients are intrinsic to the trajectory of the evolving phase indicator.
Beyond proving existence of solutions via a minimizing movements scheme (Theorem \ref{theo:mainResult}), we show that our solution concept extends Le's \cite{Le2008} to the weak setting (Subsection \ref{subsec:relationToLe}), a more classical distributional PDE formulation with potentials can be derived from it (Lemma \ref{lem:PDEinterpretation}), smooth solutions are consistent with the classical Mullins-Sekerka equation (Lemma \ref{lem:propertiesVarifoldSolutions}), and that the underlying varifold for the energy is of bounded variation (Proposition \ref{prop:RoegerInterpretation}).
%It is in this sense that we develop the gradient flow perspective for the Mullins--Sekerka equation
%at the level of a weak solution theory. Having said this, it finally comes at no surprise that
%existence of weak solutions in our sense will be established through subsequential
%limit points of the associated minimizing movements scheme.

A natural question arising from the present work is whether solutions
of the Cahn--Hilliard equation converge subsequentially to weak solutions
of the Mullins--Sekerka flow in our sense, which would improve the seminal
result of Chen~\cite{Chen} that relies on a (much) weaker formulation
of the Mullins--Sekerka flow. 
An investigation of this question will be the subject of a future work.

\subsection{Mullins--Sekerka motion law: Strong PDE formulation}
\label{subsec:strongFormulation}
\begin{subequations}
Let $d \geq 2$ and let $\Omega \subset \Rd$ be a bounded domain with 
orientable and $C^2$-boundary~$\p\Omega$. Consider also a finite
time horizon $T_{*} \in (0,\infty)$ and let $\mathscr{A}=(\mathscr{A}(t))_{t \in [0,T_{*})}$
be a time-dependent family of smoothly evolving open subsets $\mathscr{A}(t) \subset \Omega$ with
$\p\mathscr{A}(t)=\overline{\p^*\mathscr{A}(t)}$
and $\mathcal{H}^{d-1}(\p\mathscr{A}(t)\setminus\p^*\mathscr{A}(t))=0$, $t \in [0,T_{*})$, where $\partial^*\mathscr{A}$ refers to the reduced boundary \cite{AmbrosioFuscoPallara}. 
Denoting for every $t \in [0,T_{*})$
by $V_{\partial\mathscr{A}(t)}$ and $H_{\partial\mathscr{A}(t)}$ the associated normal velocity
and mean curvature vector, respectively, the family~$\mathscr{A}$ is said to evolve
by \emph{Mullins--Sekerka flow} if for each $t \in (0,T_{*})$ there exists
a chemical potential $\overline{u}(\cdot,t)$ so that
\begin{align}
\label{eq:MullinsSekerka1}
\Delta \overline{u}(\cdot,t) &= 0
&& \text{in } \Omega \setminus \p\mathscr{A}(t),
\\ \label{eq:MullinsSekerka2}
V_{\partial\mathscr{A}(t)} &= - \big(n_{\partial\mathscr{A}(t)}\cdot
\jump{\nabla\overline{u}(\cdot,t)}\big)n_{\partial\mathscr{A}(t)}
&& \text{on } \p\mathscr{A}(t) \cap \Omega,
\\ \label{eq:MullinsSekerka3}
c_0H_{\partial\mathscr{A}(t)} &= \overline{u}(\cdot,t) n_{\partial\mathscr{A}(t)}
&& \text{on } \p\mathscr{A}(t) \cap \Omega,
%\\ \label{eq:MullinsSekerka4}
%\jump{\overline{u}(\cdot,t)} &= 0
%&&\text{on } \p\mathscr{A}(t) \cap \Omega,
\\ \label{eq:MullinsSekerka5}
(n_{\p\Omega} \cdot \nabla) \overline{u}(\cdot,t) &= 0
&&\text{on } \p\Omega \setminus \overline{\partial \mathscr{A}(t)\cap \Omega}.
\end{align}
\end{subequations}

Here, we denote by $c_0 \in (0,\infty)$ a fixed surface tension
constant, by $n_{\partial\mathscr{A}(t)}$ the unit normal vector field
along~$\p\mathscr{A}(t)$ pointing inside the phase~$\mathscr{A}(t)$,
and similarly $n_{\p\Omega}$ is the inner normal on the domain boundary~$\p\Omega$.
Furthermore, the jump~$\jump{\cdot}$ across the interface~$\p\mathscr{A}(t) \cap \Omega$
in normal direction is understood to be oriented such that the signs in the following
integration by parts formula are correct:
\begin{equation}
\label{eq:defJump}
\begin{aligned}
-\int_{\Omega\setminus\p\mathscr{A}(t)} \eta\nabla\cdot v \,dx
&= \int_{\Omega} \nabla\eta\cdot v \,dx + 
\int_{\p\mathscr{A}(t)\cap\Omega} \eta \big(n_{\partial\mathscr{A}(t)} \cdot \jump{v}\big) \,\dH
\\&~~~
+ \int_{\p\Omega} \eta \big(n_{\p\Omega}\cdot v\big) \,\dH
\end{aligned}
\end{equation}
for all sufficiently regular functions 
$v \colon\overline{\Omega}\to\Rd$ and $\eta \colon \overline{\Omega} \to \mathbb{R}$.

For sufficiently smooth evolutions, it is a straightforward 
exercise to verify that the Mullins--Sekerka flow conserves
the mass of the evolving phase as
\begin{align}
\label{eq:massPreservation}
\ddt \int_{\mathscr{A}(t)} 1 \,dx =
- \int_{\p\mathscr{A}(t) \cap \Omega} 
V_{\partial\mathscr{A}(t)} \cdot n_{\partial\mathscr{A}(t)} \,\dH
= 0.
\end{align}
To compute the change of interfacial surface area, we first need to
fix a boundary condition for the interface. In the present work,
we consider the setting of a fixed contact angle $\alpha \in (0,\pi)$
in the sense that for all $t \in [0,T_{*})$ it is required that
\begin{align}
\label{eq:MullinsSekerka6}
\tag{1e}
n_{\partial\Omega} \cdot n_{\partial\mathscr{A}(t)} = \cos\alpha \quad \text{ on } \partial \Omega \cap \overline{ \partial \mathscr{A}(t) \cap \Omega }.
\end{align}
Then, it is again straightforward to compute that 
\begin{equation}
\label{eq:energyIdentity}
\begin{aligned}
&\ddt \bigg(\int_{\p\mathscr{A}(t) \cap \Omega} c_0 \,\dH
+ \int_{\p\mathscr{A}(t) \cap \partial\Omega} c_0\cos\alpha \,\dH\bigg)
\\&
= - \int_{\p\mathscr{A}(t) \cap \Omega} 
V_{\partial\mathscr{A}(t)} \cdot c_0H_{\partial\mathscr{A}(t)} \,\dH
= - \int_{\Omega} |\nabla \overline{u}(\cdot,t)|^2 \,dx \leq 0.
\end{aligned}
\end{equation}
In view of the latter inequality, one may wonder whether the Mullins--Sekerka
flow can be equivalently represented as a gradient flow with respect to interfacial 
surface energy. That this is indeed possible is of course a classical observation (see \cite{Garcke2013} and references therein) and,
at least for smooth evolutions, may be realized in terms of a suitable $H^{-1}$-type
metric on a manifold of smooth surfaces. 

\subsection{Gradient flow perspective assuming smoothly evolving geometry}\label{subsec:SandierSerfaty}
To take advantage of the insight provided by (\ref{eq:energyIdentity}), we recall two methods for gradient flows.
In parallel, our approach is inspired by De~Giorgi's methods for curves of maximal slope in 
metric spaces and the approach for Gamma-convergence of evolutionary equations developed by 
Sandier and Serfaty in~\cite{Sandier2004}, which has been applied to the Cahn--Hilliard approximation
of the Mullins--Sekerka flow by Le~\cite{Le2008}. 

Looking to the school of thought inspired by De Giorgi (see \cite{Ambrosio2005} and references therein), in a generic 
metric space $(X,d)$ equipped with energy $E:X \to \mathbb{R}\cup \{\infty\}$, a curve $t \mapsto u(t) \in X$ is said to be a solution 
of the differential inclusion $-\ddt u \in \partial E[u]$ 
if it is a curve of maximal slope, that is, it satisfies the optimal dissipation relation
\begin{equation}
\label{eqn:deGiorgiDissipation}
E[u(T)] + \frac{1}{2}\int_{0}^T \Big|\ddt u\Big|^2 + |\partial E [u]|^2 \, dt \leq E[u(0)]
\end{equation} for almost all $T\in (0,T_*)$, where
$|\ddt u|$ is interpreted in the metric sense and 
\begin{equation}\nonumber
|\partial E[u]|   := \limsup_{v \to u} \frac{(E[v]-E[u])_{+}}{d(v,u)}.
\end{equation} 
One motivation for this solution concept is in the Banach 
setting where, for sufficiently nice energies $E$, the optimal 
dissipation (\ref{eqn:deGiorgiDissipation}) is equivalent to solving 
the differential inclusion \cite{Ambrosio2005}. 

The energy behind the gradient flow structure of the Mullins--Sekerka flow 
is the perimeter functional, for which we have the classical result of 
Modica~\cite{Modica87number2} (see also~\cite{ModicaMortola})
\begin{equation*}
E_\epsilon [u]  := \int_{\Omega} \frac{1}{\epsilon}W(u) 
+ \epsilon \|\nabla u\|^2 \, dx 
\wkto_{\Gamma} c_0{\rm Per}_{\Omega} (\chi) =:E[\chi],
\end{equation*}
thereby making the perspective of Sandier and Serfaty \cite{Sandier2004} relevant. 
Abstractly, given $\Gamma$-converging (see, e.g., \cite{BraidesLocalMinNotes}, \cite{DalMasoBook}) 
energies $E_ \epsilon \wkto_{\Gamma} E$, 
this approach gives conditions for when a curve $t \mapsto u(t) \in Y$, which is the limit 
of $t \mapsto u_\epsilon(t) \in X_\epsilon$ solving $-\ddt u_{\epsilon} 
\in \nabla_{X_\epsilon} E_\epsilon[u_\epsilon]$, is a solution the 
gradient flow $-\ddt u \in \nabla_Y E[u]$ associated with the limiting energy. Specifically, this 
requires the lower semi-continuity of the time derivative and the variations given by 
\begin{align*}
\int_{0}^{T} \Big\|\ddt u\Big\|^2_{Y} \,dt
&\leq \liminf_{\epsilon\downarrow 0} \int_{0}^{T} \Big\|\ddt u_\epsilon\Big\|^2_{X_\epsilon} \,dt,
\\
 \int_{0}^{T} \big\|\nabla_Y E[u]\big\|^2_{Y} \,dt
&\leq \liminf_{\epsilon\downarrow 0} \int_{0}^{T} 
\big\|\nabla_{X_\epsilon} E_\epsilon[u_\epsilon]\big\|^2_{X_\epsilon} \,dt,
\end{align*}
 which are precisely the relations needed to maintain an optimal dissipation inequality~(\ref{eqn:deGiorgiDissipation}) in the limit. 
We note that this idea was precisely developed in finite dimensions with $C^1$-functionals, 
and extending this approach to geometric evolution equations seems to require 
re-interpretation in general.
 
This process of formally applying the Sandier--Serfaty approach to the Cahn--Hilliard equation 
was carried out by Le in \cite{Le2008} (see also \cite{le2010} and \cite{roccaScala2017}). As the Cahn-Hilliard equation and Mullins-Sekerka flow are mass preserving, it is necessary to introduce the Sobolev space $H^{1}_{(0)} := H^1(\Omega)\cap \{u: \intbar u\, dx = 0\}$ with dual $H^{-1}_{(0)}$. Then, for a set $A\subset \Omega$ with 
$\Gamma := \partial A \cap \Omega$ a piecewise Lipschitz surface, Le recalls the 
space $H^{1/2}_{(0)}(\Gamma)$, the trace space of $H^1(\Omega\setminus \Gamma)$ with constants quotiented out, and introduces a norm with Hilbert structure given by 
\begin{equation}\label{eqn:LeTraceSpace}
\| f\|_{H^{1/2}_{(0)}(\Gamma)} = \sqrt{(f,f)_{H^{1/2}_{(0)}(\Gamma)}} := \|\nabla \tilde f\|_{L^2(\Omega)},
\end{equation}
where~$\tilde f$  satisfies the Dirichlet problem
 \begin{equation}\label{def:LeDirichletProb}
 \begin{aligned}
 -\Delta  \tilde f = 0 \text{ in }\Omega\setminus \Gamma, \quad	
  \tilde f = f \text{ on }\Gamma.
 \end{aligned}
\end{equation}  
Additionally, $H^{-1/2}_{(0)}(\Gamma)$ is the naturally associated dual space
with a Hilbert space structure induced by the corresponding Riesz isomorphism. 

With these concepts, Le shows that in the smooth setting the Mullins--Sekerka flow 
is the gradient flow of the perimeter functional on a formal Hilbert manifold 
with tangent space given by~$H^{-1/2}_{(0)}(\Gamma)$, which for a characteristic function $u(t)$ with interface~$\Gamma_t := \partial \{u(t) {=} 1\}$ can summarily be written as
\begin{equation}\label{eqn:leMS}
-\ddt u(t) \in \nabla_{H^{-1/2}_{(0)}(\Gamma_t)} E[u(t)].
\end{equation}  
Further, solutions~$u_\epsilon$ of the Cahn--Hilliard equation
 \begin{equation*}
 \begin{aligned}
 \partial_t u_\epsilon  &= \Delta v_\epsilon \quad\text{where}\quad 
 v_\epsilon = \delta E_\epsilon [u_\epsilon] = \frac{1}{\epsilon} f'(u_\epsilon) - \epsilon \Delta u_\epsilon\quad \text{ in } \Omega, \\
 (n_{\partial\Omega} \cdot \nabla )u_\epsilon &= 0 \quad \text{ and }\quad
 (n_{\partial\Omega} \cdot \nabla) v_\epsilon = 0 \quad \text{ on } \partial \Omega,
 \end{aligned}
 \end{equation*}
 are shown to converge to a trajectory $t \mapsto u(t) \in BV(\Omega ; \{0,1\})$, such 
that if the evolving surface $\Gamma_t $ 
is~$C^3$ in space-time, then $u$ is a solution of the Mullins-Sekerka flow (\ref{eqn:leMS}) 
in the sense that
  \begin{align*}
\int_{0}^{T} \Big\|\ddt u(t)\Big\|^2_{H^{-1/2}_{(0)}(\Gamma_t)} \,dt
&\leq \liminf_{\epsilon\downarrow 0} \int_{0}^{T} \Big\|\ddt u_\epsilon(t)\Big\|^2_{H^{-1}_{(0)}(\Omega)} \,dt,
\\
\int_{0}^{T} \big\|H_{\Gamma_t} \big\|^2_{H^{1/2}_{(0)}(\Gamma_t)} \,dt 
&= \int_{0}^{T} \big\|\nabla_{H^{-1/2}_{(0)}(\Gamma_t)} E[u(t)]\big\|^2_{H^{-1/2}_{(0)}(\Gamma_t)} \,dt\\
&\leq \liminf_{\epsilon\downarrow 0} \int_{0}^{T} \big\|\nabla_{H^{-1}_{(0)}(\Omega)} 
E_\epsilon[u_\epsilon(t)]\big\|^2_{H^{-1}_{(0)}(\Omega)} \,dt,
\end{align*}
where $H_\Gamma$ is the scalar mean curvature of a sufficiently regular
surface~$\Gamma$. 
As developed by Le, interpretation of the left-hand side of the above inequalities is only possible for regular $\Gamma.$ In the next section, we will introduce function spaces and a solution concept that allow us to extend these quantities to the weak setting.

%\ks{As noted by Serfaty~\cite{Serfaty2011}, though this result sheds light 
%on the gradient flow structure of Mullins--Sekerka flow in a smooth setting, 
%it is of interest to develop a general framework for viewing solutions of Mullins--Sekerka flow 
%as curves of maximal slope even on the level of a weak solution theory. 
%This is the primary contribution of the present work. }

%%%%%%%%%%%%%%%%%%%%%%%%%%%%%%%%%%%%%%%%%%%%%%%%%%%%%%%%%%%%%%%%%%%%%%%%%%%%%%%%%%%%%%%%%%%%%%%%
\section{Main results and relation to previous works}
So as not to waylay the reader, we first introduce in Subsection~\ref{subsec:weakSol}
a variety of function spaces necessary for our weak solution concept 
and then state our main existence theorem. 
Further properties of the associated solution space,
an interpretation of our solution concept from
the viewpoint of classical PDE theory (i.e., in terms of associated chemical potentials),
as well as further properties of the time-evolving oriented varifolds associated
with solutions which are obtained as limit points of the natural minimizing
movements scheme are presented in Subsection~\ref{subsec:furtherProperties}.
In Subsection~\ref{subsec:functionSpaces}, we return to a discussion of the function spaces
introduced in Subsection~\ref{subsec:weakSol} to further 
illuminate the intuition behind their choice. 
We then proceed in Subsection~\ref{subsec:relationToLe} with a discussion relating 
our functional framework to the one introduced by Le~\cite{Le2008} for the smooth setting.
In Subsection~\ref{subsec:PDEperspective}, we finally take 
the opportunity to highlight the potential of our framework
in terms of the recent developments concerning weak-strong uniqueness for curvature
driven interface evolution.

\subsection{Weak formulation: Gradient flow structure and existence result}\label{subsec:weakSol}
At the level of a weak formulation, we will 
describe the evolving interface, arising as the boundary of a phase region, in terms of a time-evolving
family of characteristic functions of bounded variation. This strongly motivates us to formulate the
gradient flow structure over a manifold of $\{0,1\}$-valued
$BV$ functions in~$\Omega$.
% instead of surfaces due to
%the, in general, limited regularity of the associated interfaces.
To this end, let $d \geq 2$ and let $\Omega \subset \Rd$ be a bounded domain with 
orientable $C^2$ boundary~$\p\Omega$. Fixing the mass to be $m_0 \in (0,\mathcal{L}^d(\Omega))$, 
we define the ``manifold''
\begin{align}
\label{def:manifold}
\mathcal{M}_{m_0} := \Big\{\chi \in BV(\Omega;\{0,1\}) \colon
\int_{\Omega} \chi \,dx = m_0 \Big\}.
\end{align} 

For the definition of the associated energy functional~$E$ on $\mathcal{M}_{m_0}$, 
recall that we aim to include contact point dynamics with fixed contact 
angle in this work. Hence, in addition to an isotropic interfacial energy
contribution in the bulk, we also incorporate a capillary contribution.
Precisely, for a fixed set of three positive surface tension constants $(c_0,\gamma_+,\gamma_-)$
we consider an interfacial energy~$E[\chi]$, $\chi \in \mathcal{M}_{m_0}$, of the form
\begin{align*}
\int_{\Omega} c_0 \,d |\nabla\chi|
+ \int_{\p\Omega} \gamma_+\chi \,\dH
+ \int_{\p\Omega} \gamma_-(1{-}\chi) \,\dH,
\end{align*}
where by an abuse of notation we do not distinguish
between~$\chi$ and its trace along~$\p\Omega$.
Furthermore, the surface tension constants are assumed to satisfy
Young's relation $|\gamma_+ {-} \gamma_{-}| < c_0$ so that there
exists an angle $\alpha \in (0,\pi)$ such that
\begin{align}
\label{eq:YoungsLaw}
(\cos\alpha) c_0 = \gamma_+ - \gamma_-.
\end{align}
For convenience, we will employ the following convention:
switching if needed the roles of the sets indicated by~$\chi$ and~$1{-}\chi$,
we may assume that $\gamma_- < \gamma_+$ and hence $\alpha \in (0,\smash{\frac{\pi}{2}}]$.
In particular, by subtracting a constant, we may work with the following
equivalent formulation of the energy functional on $\mathcal{M}_{m_0}$:
\begin{align}
\label{def:energy}
E[\chi] := \int_{\Omega} c_0 \,d|\nabla\chi|
+ \int_{\p\Omega} (\cos\alpha) c_0 \chi \,\dH,
\quad \chi \in \mathcal{M}_{m_0}.
\end{align}
As usual in the context of weak formulations for curvature driven interface evolution problems,
it will actually be necessary to work with a suitable (oriented) varifold relaxation of~$E$.
We refer to Definition~\ref{def:VarifoldSolution} below for details in this direction.

In order to encode a weak solution of the Mullins--Sekerka equation as a Hilbert space gradient flow
with respect to the interfacial energy~$E$, it still remains to introduce the associated
Hilbert space structure. To this end, we first introduce a class of regular test functions, 
which give rise to infinitesimally volume preserving inner variations, denoted by 
\begin{equation}\label{def:variationVelocities}
\mathcal{S}_{\chi} : = \left\{B\in C^1(\overline{\Omega};\Rd)\colon 
\int_{\Omega} \chi \nabla \cdot B  \, dx = 0, \, B \cdot n_{\partial\Omega} = 0 
\text{ on } \partial \Omega\right\}.
\end{equation}
As in Subsection \ref{subsec:SandierSerfaty}, we recall the Sobolev space of functions with mass-average zero given by $H^1_{(0)} := \{u \in H^1(\Omega) \colon \dashint_{\Omega} u \,dx = 0\}$ with norm $\|u\|_{H^1_{(0)}} := \|\nabla u\|_{L^2(\Omega)}$
and dual $H^{-1}_{(0)} := (H^{1}_{(0)})^*$.
Based on the test function space~$\mathcal{S}_\chi$,
we can introduce the space $\mathcal{V}_{\chi}\subset H^{-1}_{(0)}$ 
as the closure of regular mass preserving normal velocities 
generated on the interface associated with~$\chi \in \mathcal{M}_{m_0}$:
\begin{equation}\label{def:surfaceVelocities}
\mathcal{V}_{\chi} : = \overline{\big\{B \cdot \nabla \chi \colon 
B\in \mathcal{S}_\chi \big\}}^{H^{-1}_{(0)}}
\subset H^{-1}_{(0)},
\end{equation}
where $B \cdot \nabla \chi$ acts on elements $u \in H^1_{(0)}$ 
in the distributional sense, i.e., recalling that $B \cdot n_{\partial\Omega} = 0$
along $\partial\Omega$ for $B \in \mathcal{S}_{\chi}$ we have
\begin{equation}
\label{eq:actionVel}
\langle B\cdot \nabla \chi, u \rangle_{H^{-1}_{(0)}, H^{1}_{(0)}} 
:= - \int_\Omega \chi \nabla \cdot (uB)\, dx.
\end{equation}
The space~$\mathcal{V}_\chi$ carries a Hilbert space structure directly
induced by the natural Hilbert space structure of~$H^{-1}_{(0)}$.
The latter in turn is induced by the inverse $\Delta_{N}^{-1}$ of the weak Neumann 
Laplacian~$\Delta_{N}\colon H^1_{(0)} \to H^{-1}_{(0)}$
(which for the Hilbert space $H^1_{(0)}$ is in fact nothing else but the associated
Riesz isomorphism) in the form of
\begin{align}\label{eqn:h-1HilbertStruct}
( F,\widetilde F )_{H^{-1}_{(0)}}
:= \int_{\Omega} \nabla \Delta_{N}^{-1}(F) 
\cdot \nabla \Delta_{N}^{-1}(\widetilde F) \,dx
\quad\text{for all } F,\widetilde F \in H^{-1}_{(0)},
\end{align}
so that we may in particular define
\begin{align}
\label{def:normVel}
\|F\|_{\mathcal{V}_\chi}^2 := \|F\|_{H^{-1}_{(0)}}^2 
= ( F, F )_{H^{-1}_{(0)}},
\quad F \in \mathcal{V}_\chi.
\end{align}
We remark that operator norm on $H^{-1}_{(0)}$ is recovered from the inner product 
in~(\ref{eqn:h-1HilbertStruct}).
For the Mullins--Sekerka flow, the space~$\mathcal{V}_\chi$ is the natural space 
associated with the action of the first variation (i.e., the gradient) of the interfacial energy
on~$\mathcal{S}_\chi$, see~\eqref{def:MSmetricSlope1} in Definition~\ref{def:VarifoldSolution} below.

In view of the Sandier--Serfaty perspective on Hilbert space gradient
flows, cf.\ Subsection~\ref{subsec:SandierSerfaty},
it would be desirable to capture the time derivative of a 
trajectory $t \mapsto \chi(\cdot,t) \in \mathcal{M}_{m_0}$ within the same bundle of Hilbert spaces.
However, given the a priori lack of regularity of weak solutions, 
it will be necessary to introduce a second space of velocities~$\mathcal{T}_\chi$
(containing the space~$\mathcal{V}_\chi$) which can be thought of as a maximal 
tangent space of the formal manifold; this is given by 
\begin{equation}\label{def:tangentVelocities} 
\mathcal{T}_\chi := \overline{\big\{\mu \in H^{-1}_{(0)} \cap \mathrm{M}(\Omega)\colon 
{\rm supp}\, \mu \subset \supp|\nabla\chi|\big\}}^{H^{-1}_{(0)}}
\subset H^{-1}_{(0)},
\end{equation}
where $\mathrm{M}(\Omega)$ denotes the space of Radon measures on $\Omega$.
Both spaces $\mathcal{V}_\chi$ and $\mathcal{T}_{\chi}$ are spaces of velocities, 
and from the PDE perspective, associated with these will be spaces 
for the (chemical) potential. We will discuss this and quantify the separation 
between $\mathcal{V}_{\chi}$ and $\mathcal{T}_\chi$ in Subsection~\ref{subsec:functionSpaces}. 
However, despite the necessity to work with two spaces, 
we emphasize that our gradient flow solution concept still only requires 
use of the above formal metric/manifold structure and the above energy functional.

\begin{definition}[Varifold solutions of Mullins--Sekerka flow as curves of maximal slope]
\label{def:VarifoldSolution}
Let $d \in \{2,3\}$, consider a finite time horizon $T_{*} \in (0,\infty)$,
and let $\Omega \subset \Rd$ be a bounded domain with orientable 
$C^2$ boundary~$\p\Omega$. For a locally compact and separable metric space~$X$,
we denote by~$\mathrm{M}(X)$ the space of finite Radon measures on~$X$.
Fix $\chi_0 \in \mathcal{M}_{m_0}$
and define the associated oriented varifold $$\mu_0 := \mu_0^{\Omega} + \mu_0^{\p\Omega}
\in \mathrm{M}(\overline{\Omega} {\times} \Sph)$$ 
by $$\mu_0^{\Omega} := c_0\,|\nabla\chi_0| \llcorner \Omega
\otimes (\smash{\delta_{\frac{\nabla\chi_0}{|\nabla\chi_0|}(x)}})_{x \in \Omega}$$ and
$$\mu_0^{\p\Omega} := (\cos\alpha)c_0\chi_0\,\mathcal{H}^{d-1} \llcorner \p\Omega
\otimes (\smash{\delta_{n_{\p\Omega}(x)}})_{x\in\p\Omega}.$$

A measurable map $\chi\colon\Omega{\times}(0,T_*)\to\{0,1\}$ together
with $\mu \in \mathrm{M}((0,T_{*}) {\times} \overline{\Omega} {\times} \Sph)$
is called a \emph{varifold solution for Mullins--Sekerka 
flow~\emph{\eqref{eq:MullinsSekerka1}--\eqref{eq:MullinsSekerka6}} 
with time horizon~$T_*$ and initial data~$(\chi_0,\mu_0)$} if:
\begin{subequations}
\begin{itemize}[leftmargin=0.7cm]
\item[i)] \emph{(Structure and compatibility)} It holds that
					$$\chi \in L^\infty(0,T_{*};\mathcal{M}_{m_0}) \cap C([0,T_*);H^{-1}_{(0)}(\Omega))$$
					with ${\rm Tr}|_{t=0}\chi = \chi_0$ in $H^{-1}_{(0)}$.
					Furthermore, $\mu = \mathcal{L}^1 \llcorner (0,T_{*})
					\otimes \{\mu_t\}_{t \in (0,T_{*})}$, and for each $t \in (0,T_{*})$
					the oriented varifold~$\mu_t \in \mathrm{M}(\overline{\Omega} {\times} \Sph)$
					decomposes as $\mu_t = \mu_t^{\Omega} + \mu_t^{\partial\Omega}$
					for two separate oriented varifolds given in their disintegrated form by
					\begin{equation}\label{eq:compSlice1}					
					\mu_t^{\Omega} =: |\mu_t^{\Omega}|_{\mathbb{S}^{d-1}}
					\otimes (\lambda_{x,t})_{x \in \overline{\Omega}}
					\in \mathrm{M}(\overline{\Omega} {\times} \Sph)
					\end{equation}					
					 and
					\begin{equation}\label{eq:compSlice2}
					\mu_t^{\p\Omega} =: |\mu_t^{\p\Omega}|_{\mathbb{S}^{d-1}}
					\otimes (\smash{\delta_{n_{\p\Omega}(x)}})_{x\in\p\Omega}
					\in \mathrm{M}(\p\Omega {\times} \Sph).
					\end{equation}
					Finally, we
					require that these oriented varifolds contain the interface associated 
					with the phase modelled by~$\chi$ in the sense of
					\begin{align}
					\label{eq:comp1}
					c_0\,|\nabla\chi(\cdot,t)| \llcorner \Omega 
					&\leq |\mu_t^{\Omega}|_{\mathbb{S}^{d-1}} \llcorner \Omega,
					\\ \label{eq:comp2}
					(\cos\alpha)c_0\chi(\cdot,t)\,\mathcal{H}^{d-1} \llcorner \p\Omega 
					&\leq |\mu_t^{\p\Omega}|_{\mathbb{S}^{d-1}}
					{+} |\mu_t^{\Omega}|_{\mathbb{S}^{d-1}} \llcorner \p\Omega
					\end{align}
					for almost every $t \in (0,T_{*})$.
\item[ii)] \emph{(Generalized mean curvature)} For almost every~$t \in (0,T_*)$
					 there exists a function~$H_\chi(\cdot,t)$ such that 
					 \begin{equation}
					 \label{eq:integrabilityWeakMeanCurvature}
					 H_\chi(\cdot,t) \in L^s\big(\Omega;d|\nabla\chi(\cdot,t)|\llcorner\Omega\big)
					 \end{equation}
					 where $s \in [2,4]$ if $d = 3$ and
					 $s \in [2,\infty)$ if $d=2$, and $H_{\chi(\cdot,t)}$
					 is the generalized mean curvature vector of~$\supp|\nabla\chi(\cdot,t)| \llcorner \Omega$
					 in the sense of R\"{o}ger~\cite[Definition~1.1]{Roeger2004}.
					 Moreover, the first variation~$\delta\mu_t$
					 of~$\mu_t$ in the direction of a volume preserving
					 inner variation~$B\in\mathcal{S}_{\chi(\cdot,t)}$ is given by
					 \begin{align}
					 \label{eq:repTangentialFirstVariation}
					 \delta \mu_t(B) = - \int_{\Omega} c_0 H_\chi(\cdot,t) 
					 \frac{\nabla\chi(\cdot,t)}{|\nabla\chi(\cdot,t)|} \cdot B \,d|\nabla\chi(\cdot,t)|.
					 \end{align}
\item[iii)] \emph{(Mullins--Sekerka motion law as a sharp energy dissipation inequality)}
					 For almost every $T \in (0,T_{*})$, it holds that
					 \begin{align}
					 ~~~~~~&E[\mu_T] {+} \frac{1}{2} \int_{0}^{T} 
					 \big\|(\partial_t\chi)(\cdot,t)\big\|_{\mathcal{T}_{\chi(\cdot,t)}}^2 
					 + \big|\partial E [\mu_t]\big|^2_{\mathcal{V}_{\chi(\cdot,t)}} \, dt   
					 \leq E[\mu_0], 
					 \label{eq:DeGiorgiInequality}
					 \end{align}
					 where we define by a slight abuse of notation, but still in the spirit of 
					 the usual metric slope \`{a} la De~Giorgi (cf.\ \eqref{def:MSmetricSlope2} 
					 and \eqref{eq:minMovMetricSlope} below),
					 \begin{equation}\label{def:MSmetricSlope1}
					 ~~~\frac{1}{2}\big|\partial E [\mu]\big|^2_{\mathcal{V}_{\chi}} := 
					 \sup_{B \in \mathcal{S}_{\chi}} 
					 \bigg\{ \delta\mu\big(B\big) - \frac{1}{2}  
					 \big\|B\cdot\nabla\chi\big\|^2_{\mathcal{V}_{\chi}} \,dt \bigg\},
					 \end{equation}
					 and where the energy functional
					 on the varifold level is given by the total mass measure associated
					 with the oriented varifold~$\mu_t$, i.e.,
					 \begin{equation}
					 \label{def:varifoldEnergy}
					 E[\mu_t] 
					 := |\mu_t|_{\mathbb{S}^{d-1}}(\overline{\Omega})
					 = |\mu_t^{\Omega}|_{\mathbb{S}^{d-1}}(\overline{\Omega}) 
					    + |\mu_t^{\p\Omega}|_{\mathbb{S}^{d-1}}(\p\Omega).
					 \end{equation}
\end{itemize}
\end{subequations}

\begin{subequations}
Finally, we call~$\chi$ a \emph{$BV$~solution for
evolution by Mullins--Sekerka flow~\emph{\eqref{eq:MullinsSekerka1}--\eqref{eq:MullinsSekerka6}} 
with initial data~$(\chi_0,\mu_0)$} if there exists 
$\mu = \mathcal{L}^1 \llcorner (0,T_{*}) \otimes \{\mu_t\}_{t \in (0,T_{*})}$
such that $(\chi,\mu)$ is a varifold solution in the above sense and
the varifold~$\mu$ is given by the canonical lift of~$\chi$, i.e., for almost every $t \in (0,T_*)$
it holds that
\begin{align}
\label{eq:BVsol1}
|\mu_t|_{\mathbb{S}^{d-1}} &= c_0\,|\nabla\chi(\cdot,t)| \llcorner \Omega
+ (\cos\alpha)c_0\chi(\cdot,t)\,\mathcal{H}^{d-1} \llcorner \p\Omega,
\\
\label{eq:BVsol2}
\lambda_{x,t} &= \delta_{\frac{\nabla\chi(\cdot,t)}{|\nabla\chi(\cdot,t)|}(x)} \quad\text{for } 
(|\nabla\chi(\cdot,t)| \llcorner \Omega)\text{-almost every } x \in \Omega.
\end{align}
\end{subequations}
\end{definition}

Before we state the main existence result of this work,
let us provide two brief comments on the above definition.
First, we note that in Lemma~\ref{lem:PDEinterpretation} we show that if $(\chi,\mu)$ 
is a varifold solution to Mullins--Sekerka flow in the sense of
Definition~\ref{def:VarifoldSolution}, then it is also a solution from a 
more typical PDE perspective.
Second, to justify the notation of (\ref{def:MSmetricSlope1}), 
we refer the reader to Lemma~\ref{lem:metricSlope} where it is shown that if 
%the varifold~$\mu$ coincides with the one induced by~$\nabla \chi$ \ks{do we actually need this?} and
in addition the relation~\eqref{eq:repTangentialFirstVariation} is
satisfied, it holds that
\begin{equation*}
\big|\partial E [\mu]\big|_{\mathcal{V}_\chi}  
= \sup_{\Psi}
%\sup_{\substack{s\mapsto\Psi_s\in C^1\text{-}\mathrm{Diffeo}(\overline{\Omega},\overline{\Omega}),
%\\ \Psi_0=\mathrm{Id}, \,\int_{\Omega} \chi\circ\Psi_s^{-1} \,dx = m_0,
%\\ \partial_s\Psi_s|_{s=0} = B\in\mathcal{S}_\chi}} 
\limsup_{s \to 0}
\frac{(E[\mu\circ\Psi_s^{-1}] - E[\mu])_{+}}{\|\chi\circ\Psi_s^{-1} - \chi\|_{H^{-1}_{(0)}}},
\end{equation*}
where the supremum runs over all one-parameter families of diffeomorphisms $s\mapsto\Psi_s\in 
C^1\text{-}\mathrm{Diffeo}(\overline{\Omega},\overline{\Omega})$
which are differentiable in an open neighborhood of the origin and
further satisfy $\Psi_0=\mathrm{Id}$, $\int_{\Omega} \chi\circ\Psi_s^{-1} \,dx = m_0$
and $\partial_s\Psi_s|_{s=0} = B\in\mathcal{S}_\chi$.
Note that the relation $\partial_s(\chi\circ\Psi_s^{-1})|_{s=0} {+} (B\cdot\nabla)\chi = 0$ 
enforced by the chain rule, $(\chi\circ\Psi_s^{-1})|_{s=0} = \chi$
as well as $\partial_s\Psi_s^{-1}|_{s=0}=-\partial_s\Psi_s|_{s=0}=-B$ motivates us to consider $\mathcal{V}_{\chi}$ as the tangent 
space for the formal manifold at~$\chi\in\mathcal{M}_{m_0}$.

\begin{theorem}[Existence of varifold solutions of Mullins--Sekerka flow]
\label{theo:mainResult}
Let $d \in \{2,3\}$, $T_* \in (0,\infty)$, and $\Omega\subset\Rd$ be
a bounded domain with orientable $C^2$ boundary $\p\Omega$.
Let $m_0 \in (0,\mathcal{L}^d(\Omega))$, $\chi_0 \in \mathcal{M}_{m_0}$,
$c_0 \in (0,\infty)$, $\alpha \in (0,\smash{\frac{\pi}{2}}]$, and
let $\mu_0 \in \mathrm{M}(\overline{\Omega}{\times}\mathbb{S}^{d-1})$
be the associated oriented varifold to this data, cf.\ Definition~\ref{def:VarifoldSolution}.

Then, there exists a varifold solution for Mullins--Sekerka 
flow~\emph{\eqref{eq:MullinsSekerka1}--\eqref{eq:MullinsSekerka6}} 
with initial data~$(\chi_0,\mu_0)$ in the sense of Definition~\ref{def:VarifoldSolution}.

In fact, each limit point of the minimizing movements scheme
associated with the Mullins--Sekerka flow~\emph{\eqref{eq:MullinsSekerka1}--\eqref{eq:MullinsSekerka6}},
cf.\ Subsection~\ref{subsec:keyIngredientsMinMov},
is a solution in the sense of Definition~\ref{def:VarifoldSolution}.
In case of convergence of the time-integrated energies (cf.\ \eqref{eq:energyConvergenveAssumpMinMov}),
the corresponding limit point of the minimizing movements scheme is even a $BV$~solution
in the sense of Definition~\ref{def:VarifoldSolution}. 
\end{theorem}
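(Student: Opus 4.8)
The plan is to realize the solution as a subsequential limit point of the minimizing movements (De~Giorgi) scheme associated to the metric structure on $\mathcal{M}_{m_0}$ induced by the $H^{-1}_{(0)}$ distance, which is the natural distance for the $H^{-1}$-type gradient flow structure underlying Mullins--Sekerka. Fixing $N \in \mathbb{N}$ and time step $h := T_*/N$, I would set $\chi_h^0 := \chi_0$ and iteratively define $\chi_h^n$ as a minimizer over $\mathcal{M}_{m_0}$ of
$$\chi \mapsto E[\chi] + \frac{1}{2h}\|\chi - \chi_h^{n-1}\|_{H^{-1}_{(0)}}^2.$$
Existence of each minimizer follows from the direct method: the interfacial part of $E$ is lower semicontinuous and coercive on $BV(\Omega;\{0,1\})$, the boundary (capillary) term is continuous under the $BV\hookrightarrow L^1$ trace, the mass constraint $\int_\Omega\chi\,dx=m_0$ is closed under $L^1$-convergence, and $\|\cdot - \chi_h^{n-1}\|_{H^{-1}_{(0)}}$ is continuous along the compact minimizing sequence. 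Minimality yields at each step the discrete Euler--Lagrange relation identifying the first variation of $E$ tested against $B\in\mathcal{S}_{\chi_h^n}$ (i.e.\ the generalized mean curvature of $\chi_h^n$) with the difference quotient $-\frac1h(\chi_h^n-\chi_h^{n-1})$ paired in $H^{-1}_{(0)}$, together with the discrete slope bound $\big|\partial E[\chi_h^n]\big|_{\mathcal{V}_{\chi_h^n}}\le\frac1h\|\chi_h^n-\chi_h^{n-1}\|_{H^{-1}_{(0)}}$.

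I would then assemble the piecewise-constant interpolant $\bar\chi_h$, the De~Giorgi variational interpolant $\tilde\chi_h$ (minimizing with the fractional step $t-(n{-}1)h$), and the canonical-lift oriented varifolds $\mu^h_t$. Summing the one-step inequalities and invoking the variational-interpolant identity produces the discrete sharp dissipation estimate
$$E[\bar\chi_h(T)] + \frac12\int_0^T \|\partial_t\chi_h\|_{H^{-1}_{(0)}}^2 + \big|\partial E[\tilde\chi_h]\big|_{\mathcal{V}_{\tilde\chi_h}}^2\,dt \le E[\chi_0],$$
which gives uniform bounds on $\sup_t E[\chi_h(t)]$ (hence on $\sup_t|\nabla\chi_h(t)|(\Omega)$ and on the varifold masses), on $\int_0^T\|\partial_t\chi_h\|_{H^{-1}_{(0)}}^2\,dt$, and a uniform $\tfrac12$-H\"older bound for $t\mapsto\chi_h(t)$ in $H^{-1}_{(0)}$; R\"oger's theory supplies the uniform $L^s$ control of the generalized mean curvature in \eqref{eq:integrabilityWeakMeanCurvature}. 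By Arzel\`a--Ascoli (using the compact embedding into $H^{-1}_{(0)}$) I extract $\chi_h\to\chi$ in $C([0,T_*);H^{-1}_{(0)})$, so $\mathrm{Tr}|_{t=0}\chi=\chi_0$; weak-$*$ $BV$ compactness for a.e.\ $t$ gives $\chi\in L^\infty(0,T_*;\mathcal{M}_{m_0})$; $\partial_t\chi_h\rightharpoonup\partial_t\chi$ in $L^2(0,T_*;H^{-1}_{(0)})$; and weak-$*$ compactness of the uniformly bounded $\mu^h$ yields $\mu=\mathcal{L}^1\otimes\mu_t$ with the bulk/boundary decomposition of part~(i). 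The compatibility inequalities \eqref{eq:comp1}--\eqref{eq:comp2} follow from lower semicontinuity of the mass measures under weak-$*$ convergence, and the curvature compactness of Sch\"atzle/R\"oger produces $H_\chi$ and the first-variation representation \eqref{eq:repTangentialFirstVariation}, establishing (ii).

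It remains to pass to the limit in the dissipation inequality (iii). Lower semicontinuity of $E[\mu_T]$ and of the velocity term $\int_0^T\|\partial_t\chi\|_{\mathcal{T}_\chi}^2\,dt$ is comparatively routine once one checks that the limit velocity is a measure supported in $\supp|\nabla\chi|$, so that it indeed lies in $\mathcal{T}_\chi$ and the weak $H^{-1}_{(0)}$-lower semicontinuity applies. The main obstacle is the lower semicontinuity of the metric-slope term $\int_0^T\big|\partial E[\mu_t]\big|_{\mathcal{V}_\chi}^2\,dt$: this is precisely the discontinuity of the first variation of perimeter under weak-$*$ $BV$ convergence that forced the energy-convergence hypothesis in Luckhaus--Sturzenhecker, and it is exactly what the oriented-varifold relaxation is designed to circumvent. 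Since the slope \eqref{def:MSmetricSlope1} is a supremum of $\delta\mu(B)-\tfrac12\|B\cdot\nabla\chi\|_{\mathcal{V}_\chi}^2$ over the $\chi$-dependent admissible set $\mathcal{S}_\chi$, I would, given $B\in\mathcal{S}_\chi$ for the limit phase, build corrected fields $B_h\in\mathcal{S}_{\chi_h}$ by adding a small $C^1$ correction (vanishing as $h\to0$) that restores $\int_\Omega\chi_h\nabla\cdot B_h\,dx=0$, and then use $\delta\mu^h(B_h)\to\delta\mu(B)$ together with $\|B_h\cdot\nabla\chi_h\|_{\mathcal{V}_{\chi_h}}\to\|B\cdot\nabla\chi\|_{\mathcal{V}_\chi}$; taking the supremum over $B$ delivers the required $\liminf$ inequality. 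Combining the three lower-semicontinuity statements with the discrete estimate yields \eqref{eq:DeGiorgiInequality} for every limit point. Finally, under convergence of the time-integrated energies \eqref{eq:energyConvergenveAssumpMinMov}, no mass is lost to the varifold in the limit, forcing equality in \eqref{eq:comp1}--\eqref{eq:comp2} and hence the canonical-lift identities \eqref{eq:BVsol1}--\eqref{eq:BVsol2}, which upgrades the varifold solution to a $BV$ solution exactly as in the Luckhaus--Sturzenhecker argument.
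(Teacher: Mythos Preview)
Your overall strategy matches the paper's---minimizing movements with the De~Giorgi variational interpolant, compactness, and passing to the limit in a sharp dissipation inequality. But there is one genuine gap and one methodological difference worth spelling out.

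\textbf{The velocity term and $\partial_t\chi\in\mathcal{T}_\chi$.} This is the gap. You write that lower semicontinuity of $\int_0^T\|\partial_t\chi\|_{\mathcal{T}_\chi}^2\,dt$ is ``comparatively routine once one checks that the limit velocity is a measure supported in $\supp|\nabla\chi|$''. But $\partial_t\chi(\cdot,t)$ is only known to lie in $H^{-1}_{(0)}$; it is \emph{not} a priori a Radon measure, and no support argument is available. What must actually be shown is that the potential $u(\cdot,t):=\Delta_N^{-1}(\partial_t\chi(\cdot,t))$ is harmonic in $\Omega\setminus\supp|\nabla\chi(\cdot,t)|$, which is the characterization of $\mathcal{T}_\chi$ via $\mathcal{H}_\chi$. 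The paper's argument (Step~10) is not routine: it first invokes Sch\"atzle's regularity to produce an \emph{open} representative $\widetilde A(t)$ of $\{\chi(\cdot,t)=1\}$ with $\partial\widetilde A(t)\cap\Omega=\supp|\nabla\chi(\cdot,t)|$, and then runs a Lebesgue-point/difference-quotient argument: testing the evolution equation with time-independent $\zeta\in C_c^\infty(\Omega\setminus\overline{\widetilde A(t)};[0,\infty))$ and using that $\chi$ is $\{0,1\}$-valued with $\chi(\cdot,t)\equiv 0$ on $\supp\zeta$ squeezes $\int\nabla u(\cdot,t)\cdot\nabla\zeta=0$ from above and below via one-sided limits $s\downarrow t$ and $s\uparrow t$. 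This step is one of the main technical points of the proof and cannot be waved through.

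\textbf{The slope term.} Here your approach diverges from the paper's rather than being wrong. You propose to pass to the limit directly in $|\partial E|_{\mathcal{V}}^2$ by correcting test fields $B\in\mathcal{S}_\chi$ to $B_h\in\mathcal{S}_{\tilde\chi_h}$. The paper instead carries a potential $w^h:=\Delta_N^{-1}\big(\tfrac{\bar\chi^h(t)-\bar\chi^h(\lfloor t/h\rfloor h)}{t-\lfloor t/h\rfloor h}\big)$ through the scheme, so that the slope contribution in the discrete dissipation is exactly $\tfrac12\|\nabla w^h\|_{L^2}^2$; weak $H^1$-compactness gives $w^h\rightharpoonup w$, the \emph{linear} approximate Gibbs--Thomson relation $\delta\mu^h_t(B)=\int\bar\chi^h\nabla\cdot(w^h B)$ passes to the limit in time-integrated form and is then localized, and only afterwards is $\tfrac12\|\nabla w\|_{L^2}^2 \geq |\partial E[\mu_t]|_{\mathcal{V}_\chi}^2$ obtained by a one-line projection onto $\mathcal{G}_\chi$. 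This potential route sidesteps the issue that varifold convergence is only in the product space $(0,T_*)\times\overline\Omega\times\mathbb{S}^{d-1}$ (not slice-wise), which your direct $\liminf$ argument would have to confront; it also delivers the curvature identification $c_0H_\chi=w$ and the $L^s$ integrability~\eqref{eq:integrabilityWeakMeanCurvature} via the trace estimate of Proposition~\ref{prop:firstVariationTangential} at the \emph{limit} level, rather than requiring uniform-in-$h$ curvature bounds as you suggest.
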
	

The proof of Theorem~\ref{theo:mainResult} is the content of 
Subsections~\ref{subsec:keyIngredientsMinMov}--\ref{subsec:proofMainResult}.

\begin{remark}
\label{rem:existencePDEPerspective}
If instead of the conditions from items~\textit{ii)} and~\textit{iii)} 
from Definition~\ref{def:VarifoldSolution} one asks for the 
existence of two potentials $u\in L^2(0,T_*;H^1_{(0)})$ 
and $w\in L^2(0,T_*;H^1)$, respectively, which 
satisfy the conditions~\eqref{eq:evolEquationPhase}, \eqref{eq:GibbsThomson},
\eqref{eq:L2control} and~\eqref{eq:DeGiorgiInequality2}
from Lemma~\ref{lem:PDEinterpretation} below,
the results of Theorem~\ref{theo:mainResult} in fact hold without any restriction
on the ambient dimenion~$d$. We will prove this fact in 
the course of Subsection~\ref{subsec:proofsFurtherProperties}.
\end{remark}

\subsection{Further properties of varifold solutions}\label{subsec:furtherProperties}
The purpose of this subsection is to collect a variety of further results
complementing our main existence result, Theorem~\ref{theo:mainResult}.
Proofs of these are postponed until Subsection~\ref{subsec:proofsFurtherProperties}.

\begin{lemma}[Interpretation as a De~Giorgi metric slope]
\label{lem:metricSlope}
Let $\chi\in BV(\Omega;\{0,1\})$
%, $\mu$ be the varifold naturally generated by $\chi$, that is, 
%$\mu  := c_0|\nabla\chi|\llcorner\Omega\otimes(\delta_{(\nabla\chi/|\nabla\chi|)(x)})_{x\in\Omega}
%+ c_0(\cos\alpha)\chi\mathcal{H}^{d-1}\llcorner\partial\Omega
%\otimes(\delta_{n_{\partial\Omega}(x)})_{x\in\partial\Omega}$
and $\mu \in \mathrm{M}(\overline{\Omega}{\times}\mathbb{S}^{d-1})$. 
Suppose in addition that the tangential first variation of $\mu$ is 
given by a curvature $H_\chi \in L^1(\Omega; |\nabla \chi|)$ in the sense of 
equation~\eqref{eq:repTangentialFirstVariation}. Then, it holds that
\begin{equation}\label{def:MSmetricSlope2}
\big|\partial E [\mu]\big|_{\mathcal{V}_\chi}  
= \sup_{\Psi} 
%\sup_{\substack{s\mapsto\Psi_s\in C^1\text{-}\mathrm{Diffeo}(\overline{\Omega},\overline{\Omega}),
%\\ \Psi_0=\mathrm{Id}, \,\int_{\Omega} \chi\circ\Psi_s^{-1} \,dx = m_0,
%\\ \partial_s\Psi_s|_{s=0}= B\in\mathcal{S}_\chi}} 
\limsup_{s \to 0}
\frac{(E[\mu\circ\Psi_s^{-1}] - E[\mu])_{+}}{\|\chi\circ\Psi_s^{-1} - \chi\|_{H^{-1}_{(0)}}},
\end{equation}
where the supremum runs over all one-parameter families of diffeomorphisms $s\mapsto\Psi_s\in 
C^1\text{-}\mathrm{Diffeo}(\overline{\Omega},\overline{\Omega})$
which are differentiable in a neighborhood of the origin and
further satisfy $\Psi_0=\mathrm{Id}$, $\int_{\Omega} \chi\circ\Psi_s^{-1} \,dx = m_0$
and $\partial_s\Psi_s|_{s=0} = B\in\mathcal{S}_\chi$.
Without~\eqref{eq:repTangentialFirstVariation}, the right hand side
of~\eqref{def:MSmetricSlope2} provides at least an upper bound.
\end{lemma}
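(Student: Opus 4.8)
The plan is to evaluate the difference quotient on the right-hand side of \eqref{def:MSmetricSlope2} to leading order along each admissible family, to recognize the resulting normalized supremum as the variational slope \eqref{def:MSmetricSlope1} by an elementary scalar optimization, and to isolate the one place where the representation \eqref{eq:repTangentialFirstVariation} is genuinely needed.

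First I would record two first-order expansions along a family $s \mapsto \Psi_s$ with $\Psi_0 = \mathrm{Id}$, $\int_\Omega \chi\circ\Psi_s^{-1}\,dx = m_0$ and $\partial_s\Psi_s|_{s=0} = B \in \mathcal{S}_\chi$. For the denominator I would combine the transport identity $\partial_s(\chi\circ\Psi_s^{-1})|_{s=0} = -B\cdot\nabla\chi$ recorded above with the absolute continuity of $s\mapsto\chi\circ\Psi_s^{-1}$ as a curve in $H^{-1}_{(0)}$ to obtain $\|\chi\circ\Psi_s^{-1} - \chi\|_{H^{-1}_{(0)}} = |s|\,\|B\cdot\nabla\chi\|_{\mathcal{V}_\chi} + o(s)$. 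For the numerator I would use that $s\mapsto E[\mu\circ\Psi_s^{-1}]$ is the total mass of a pushforward (oriented) varifold and is therefore differentiable at $s=0$ with derivative the first variation, so that $E[\mu\circ\Psi_s^{-1}] - E[\mu] = s\,\delta\mu(B) + o(s)$. Taking positive parts and letting $s\downarrow 0$ and $s\uparrow 0$ (which interchanges the roles of $\pm B$, both admissible since $\mathcal{S}_\chi$ is symmetric), the inner limit superior for a fixed family equals $(\delta\mu(B))_+/\|B\cdot\nabla\chi\|_{\mathcal{V}_\chi}$ whenever $B\cdot\nabla\chi\neq0$.

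With these expansions in hand, the inequality in which the right-hand side of \eqref{def:MSmetricSlope2} bounds $|\partial E[\mu]|_{\mathcal{V}_\chi}$ from above is the robust one and requires no further hypothesis. Indeed, given any $B\in\mathcal{S}_\chi$ that is near-optimal for the variational problem \eqref{def:MSmetricSlope1}, so that $\delta\mu(B) - \tfrac12\|B\cdot\nabla\chi\|_{\mathcal{V}_\chi}^2$ is close to $\tfrac12|\partial E[\mu]|_{\mathcal{V}_\chi}^2$, the arithmetic--geometric mean inequality forces $\delta\mu(B) \ge |\partial E[\mu]|_{\mathcal{V}_\chi}\,\|B\cdot\nabla\chi\|_{\mathcal{V}_\chi} - \varepsilon$; choosing a volume-preserving family with initial velocity $B$ and passing to the limit in the difference quotient then produces a value at least $|\partial E[\mu]|_{\mathcal{V}_\chi} - \varepsilon$. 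This is exactly the assertion that, without \eqref{eq:repTangentialFirstVariation}, the right-hand side is still an upper bound.

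The reverse inequality is where the representation enters. Here one must bound the difference quotient from above along every admissible family, i.e.\ show $\delta\mu(B) \le |\partial E[\mu]|_{\mathcal{V}_\chi}\,\|B\cdot\nabla\chi\|_{\mathcal{V}_\chi}$ for all $B\in\mathcal{S}_\chi$, including fields $B$ tangential to the interface (those with $B\cdot\nabla\chi = 0$, for which the denominator degenerates). By \eqref{eq:repTangentialFirstVariation}, $\delta\mu(B) = -\int_\Omega c_0 H_\chi \tfrac{\nabla\chi}{|\nabla\chi|}\cdot B \,d|\nabla\chi|$ depends on $B$ only through the measure $B\cdot\nabla\chi$ and in particular vanishes on tangential fields; together with $H_\chi\in L^1(\Omega;|\nabla\chi|)$ and the scalar Legendre identity $\sup_B\{\delta\mu(B) - \tfrac12\|B\cdot\nabla\chi\|_{\mathcal{V}_\chi}^2\} = \tfrac12|\partial E[\mu]|_{\mathcal{V}_\chi}^2$, this shows that $B\mapsto\delta\mu(B)$ descends to a linear functional on the generating set of $\mathcal{V}_\chi$ that is bounded with operator norm exactly $|\partial E[\mu]|_{\mathcal{V}_\chi}$. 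Feeding this bound back into the per-family computation of the previous step closes the equality. The main obstacle I anticipate is the rigorous justification of the two first-order expansions in the nonsmooth regime --- the $H^{-1}_{(0)}$-differentiability of $s\mapsto\chi\circ\Psi_s^{-1}$ for a merely $BV$ phase, with metric derivative identified as $\|B\cdot\nabla\chi\|_{\mathcal{V}_\chi}$, and the differentiability of the pushforward mass with an $o(s)$ error that survives the positive part and the limit superior --- together with the careful treatment of the degenerate directions $B\cdot\nabla\chi=0$, which is precisely the point at which \eqref{eq:repTangentialFirstVariation} converts the one-sided bound into an identity.
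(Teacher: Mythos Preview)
Your proposal is correct and follows essentially the same route as the paper. The paper packages the denominator expansion as Lemma~\ref{lem:firstVarDeGiorgi} (strong $H^{-1}_{(0)}$-convergence of $\frac{\chi\circ\Psi_s^{-1}-\chi}{s}$ to $-B\cdot\nabla\chi$), and for the unconditional inequality runs the scalar inequality in the slightly cleaner direction $\tfrac{1}{2}A^2 \geq \delta\mu(B) - \tfrac{1}{2}\|B\cdot\nabla\chi\|_{\mathcal{V}_\chi}^2$ before taking the supremum over~$B$; for the reverse it argues, exactly as you do, that \eqref{eq:repTangentialFirstVariation} forces $\delta\mu(B)=0$ whenever $B\cdot\nabla\chi=0$, so $B\cdot\nabla\chi\mapsto\delta\mu(B)$ is a well-defined bounded functional on $\mathcal{V}_\chi$ with norm $|\partial E[\mu]|_{\mathcal{V}_\chi}=A$.
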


Next, we aim to interpret the
information provided by the sharp energy inequality~\eqref{eq:DeGiorgiInequality}
from a viewpoint which is more in the tradition of classical PDE~theory.
More precisely, we show that~\eqref{eq:DeGiorgiInequality}
together with the representation~\eqref{eq:repTangentialFirstVariation}
already encodes the evolution equation for the evolving phase
as well as the Gibbs-Thomson law--both in terms
of a suitable distributional formulation featuring an associated potential.
We emphasize, however, that without further regularity assumptions
on the evolving geometry these two potentials may \textit{a~priori} not agree.
This flexibility is in turn a key strength of the gradient flow perspective
to allow for less regular evolutions (i.e., a weak solution theory). 

\begin{lemma}[Interpretation from a PDE~perspective]
\label{lem:PDEinterpretation}
Let $(\chi,\mu)$ be a varifold solution for Mullins--Sekerka flow 
with initial data~$(\chi_0,\mu_0)$ in the sense of Definition~\ref{def:VarifoldSolution}.
For a given $\chi\in\mathcal{M}_{m_0}$, define for each of the two
velocity spaces~$\mathcal{V}_{\chi}$ and~$\mathcal{T}_{\chi}$
an associated space of potentials via $\mathcal{G}_{\chi}:=\Delta_N^{-1}(\mathcal{V}_{\chi})
\subset H^1_{(0)}$ and $\mathcal{H}_{\chi}:=\smash{\Delta_N^{-1}(\mathcal{T}_{\chi})}
\subset H^1_{(0)}$, respectively.
\begin{itemize}[leftmargin=0.7cm]
\item[i)] There exists a potential
					$u \in L^2(0,T_{*};\mathcal{H}_{\chi(\cdot,t)}) \subset L^2(0,T_{*};H^1_{(0)})$
					such that $\partial_t\chi = \Delta u$ in $\Omega{\times}(0,T_*)$, 
					$\chi(\cdot,0) = \chi_0$ in $\Omega$, and $(n_{\p\Omega}\cdot\nabla)u = 0$
					on $\p\Omega{\times}(0,T_*)$, in the precise sense of
				  \begin{equation}
					\label{eq:evolEquationPhase}
					\begin{aligned}
					&\int_{\Omega} \chi(\cdot,T)\zeta(\cdot,T) \,dx 
					- \int_{\Omega} \chi_{0}\zeta(\cdot,0) \,dx
					\\&
					= \int_{0}^{T} \int_{\Omega} \chi \p_t\zeta \,dxdt
					- \int_{0}^{T} \int_{\Omega} \nabla u \cdot \nabla\zeta \,dxdt
					\end{aligned}
					\end{equation} 
					for almost every $T \in (0,T_*)$ and all 
					$\zeta \in C^1(\overline{\Omega} {\times} [0,T_{*}))$.
\item[ii)] There exists a potential $w \in L^2(0,T_*;H^1(\Omega))$ such that
					 for $w_0 := w {-} \dashint_{\Omega} w \,dx$ one has
					 $w_0\in L^2(0,T_{*};\mathcal{G}_{\chi(\cdot,t)}) \subset L^2(0,T_{*};H^1_{(0)})$,
					 and further satisfies the following three properties: first,
					 the Gibbs--Thomson law
					 \begin{align}
					 \label{eq:GibbsThomson}
					 \int_{\overline{\Omega} {\times} \Sph}
					 (\mathrm{Id} {-} s\otimes s) : \nabla B(x) \,d\mu_t(x,s)
					 = \int_{\Omega} \chi(\cdot,t) \nabla \cdot (w(\cdot,t)B) \,dx
					 \end{align}
					 holds true for almost every $t \in (0,T_{*})$
					 and all $B \in C^1(\overline{\Omega};\Rd)$
					 such that $(B \cdot n_{\p\Omega})|_{\p\Omega} \equiv 0$;
					 second, it holds that
					 \begin{align}
					 \label{eq:normEquivalenceGibbsThomsonPotential}
					 \int_{\Omega} \frac{1}{2}|\nabla w(\cdot,t)|^2 \,dx 
					 = \frac{1}{2}\|w_0(\cdot,t)\|^2_{\mathcal{G}_{\chi(\cdot,t)}} 
					 = \frac{1}{2}\big|\partial E [\chi(\cdot,t),\mu_t]\big|^2_{\mathcal{V}_{\chi(\cdot,t)}}
					 \end{align}
					 for almost every~$t \in (0,T_{*})$; and third, 
					 there is $C=C(\Omega,d,c_0,m_0,\chi_0) > 0$ such that
					 \begin{align}
					 \label{eq:L2control}
					 \|w(\cdot,t)\|_{H^1(\Omega)} \leq C(1 + \|\nabla w(\cdot,t)\|_{L^2(\Omega)})
					 \end{align}
					 for almost every $t \in (0,T_{*})$. 
\item[iii)] The energy dissipation inequality holds true in the sense that
						\begin{align}
						\label{eq:DeGiorgiInequality2}
						E[\mu_T]
						+ \int_{0}^{T} \int_{\Omega} \frac{1}{2}|\nabla u|^2 + \frac{1}{2}|\nabla w|^2 \,dxdt
						\leq E[\mu_0]
						\end{align}
						for almost every $T \in (0,T_*)$.
\end{itemize}
\end{lemma}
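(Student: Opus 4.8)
The plan is to read off the two potentials directly from the Hilbert space structure already encoded in Definition~\ref{def:VarifoldSolution}, treating items~i) and~iii) as bookkeeping and concentrating the real work on item~ii).

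For item~i), I would first use that the structure condition $\chi\in C([0,T_*);H^{-1}_{(0)})$ together with the bound $\int_0^{T_*}\|\partial_t\chi\|^2_{\mathcal{T}_{\chi(\cdot,t)}}\,dt<\infty$ encoded in~\eqref{eq:DeGiorgiInequality} makes $\chi$ an absolutely continuous curve in the Hilbert space $H^{-1}_{(0)}$ with square-integrable metric derivative. Hence $\partial_t\chi$ exists for a.e.\ $t$ as an element of $\mathcal{T}_{\chi(\cdot,t)}\subset H^{-1}_{(0)}$ and lies in $L^2(0,T_*;H^{-1}_{(0)})$. I would then set $u:=\Delta_N^{-1}(\partial_t\chi)$, which by definition lies in $\mathcal{H}_{\chi(\cdot,t)}=\Delta_N^{-1}(\mathcal{T}_{\chi(\cdot,t)})$ and satisfies $\|\nabla u\|_{L^2(\Omega)}=\|\partial_t\chi\|_{\mathcal{T}_{\chi}}$, so $u\in L^2(0,T_*;\mathcal{H}_{\chi(\cdot,t)})$. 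The weak equation~\eqref{eq:evolEquationPhase} is then just the definition of the weak Neumann Laplacian combined with integration by parts in time against $\zeta$; the only point to watch is that $\partial_t\chi$ pairs with $H^1_{(0)}$, so the constant part of $\zeta$ is handled separately using mass conservation $\frac{d}{dt}\int_\Omega\chi\,dx=0$, which annihilates exactly this contribution.

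The heart of the proof is item~ii), where I would interpret the metric slope~\eqref{def:MSmetricSlope1} as a quadratic (Legendre-type) optimization. By~\eqref{eq:repTangentialFirstVariation}, the functional $B\mapsto\delta\mu_t(B)$ depends on $B\in\mathcal{S}_\chi$ only through $B\cdot\nabla\chi\in\mathcal{V}_\chi$, since two fields with the same $B\cdot\nabla\chi$ produce the same normal measure $(\tfrac{\nabla\chi}{|\nabla\chi|}\cdot B)\,|\nabla\chi|$; thus it descends to a linear functional $\ell$ on the dense subspace $\{B\cdot\nabla\chi:B\in\mathcal{S}_\chi\}\subset\mathcal{V}_\chi$. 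Finiteness of $|\partial E[\mu_t]|_{\mathcal{V}_\chi}$ for a.e.\ $t$, forced by~\eqref{eq:DeGiorgiInequality}, is equivalent to boundedness of $\ell$ in the $\mathcal{V}_\chi$-norm, so Riesz representation provides a unique $G\in\mathcal{V}_\chi$ with $\ell(\cdot)=(G,\cdot)_{H^{-1}_{(0)}}$ and $\|G\|_{\mathcal{V}_\chi}=|\partial E[\mu_t]|_{\mathcal{V}_\chi}$. Setting $w_0:=\Delta_N^{-1}(G)\in\mathcal{G}_{\chi}$, the isometry of $\Delta_N^{-1}$ gives $\|\nabla w_0\|_{L^2(\Omega)}=\|G\|_{\mathcal{V}_\chi}=|\partial E[\mu_t]|_{\mathcal{V}_\chi}$, which is~\eqref{eq:normEquivalenceGibbsThomsonPotential}, while unwinding $(G,B\cdot\nabla\chi)_{H^{-1}_{(0)}}=\langle B\cdot\nabla\chi,w_0\rangle=-\int_\Omega\chi\nabla\cdot(w_0B)\,dx$ yields the Gibbs--Thomson law~\eqref{eq:GibbsThomson} for all volume-preserving $B\in\mathcal{S}_\chi$.

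Two points then remain, and these I expect to be the main obstacle. First, Gibbs--Thomson must be promoted from $B\in\mathcal{S}_\chi$ to all $B$ with $B\cdot n_{\partial\Omega}=0$ by fixing the mean value. Since for volume-preserving $B$ the normal trace $\tfrac{\nabla\chi}{|\nabla\chi|}\cdot B$ has vanishing $|\nabla\chi|$-average, the identity obtained so far only determines the trace of $w_0$ on $\supp|\nabla\chi|$ up to an additive constant; using Röger's generalized mean curvature from Definition~\ref{def:VarifoldSolution} to write $\delta\mu_t(B)=-\int_\Omega c_0H_\chi\tfrac{\nabla\chi}{|\nabla\chi|}\cdot B\,d|\nabla\chi|$ for all such $B$, together with a density argument for normal traces of fields in $\mathcal{S}_\chi$ among mean-zero functions in $L^2(|\nabla\chi|)$, I would identify this trace with $c_0H_\chi$ up to a constant and then define $w:=w_0+\bar w$ with $\bar w$ the unique shift making the trace exactly $c_0H_\chi$, so that Gibbs--Thomson holds for every admissible $B$ by linearity. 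Second, for~\eqref{eq:L2control} I would control $\bar w$ using the fixed mass: since $0<m_0<\mathcal{L}^d(\Omega)$, solving $\Delta\phi=\chi-m_0/\mathcal{L}^d(\Omega)$ with homogeneous Neumann data and mollifying yields an admissible test field $B$ with $\int_\Omega\chi\nabla\cdot B\,dx=m_0(1-m_0/\mathcal{L}^d(\Omega))>0$ depending only on $m_0$ and $\Omega$; testing Gibbs--Thomson with this $B$ and invoking the Poincaré inequality for $w_0$ isolates $\bar w$ up to terms controlled by $\|\nabla w\|_{L^2(\Omega)}$ and the uniform bound $E[\mu_t]\le E[\mu_0]$. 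The delicate step is to estimate $\delta\mu_t(B)$ without reintroducing the full $L^2(|\nabla\chi|)$-norm of $H_\chi$; I would split $B$ into its volume-preserving part, bounded via the slope and hence by $\|\nabla w\|_{L^2(\Omega)}$, and the single fixed mass-changing direction, which I bound by the energy. Finally, item~iii) is immediate: substituting $\|\partial_t\chi\|^2_{\mathcal{T}_{\chi}}=\|\nabla u\|^2_{L^2(\Omega)}$ and $|\partial E[\mu_t]|^2_{\mathcal{V}_{\chi}}=\|\nabla w\|^2_{L^2(\Omega)}$ into~\eqref{eq:DeGiorgiInequality} turns it verbatim into~\eqref{eq:DeGiorgiInequality2}, so the genuinely new content lies entirely in the Riesz representation of the slope and the mean-value analysis of item~ii).
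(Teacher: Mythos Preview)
Your treatment of items~i) and~iii) and the Riesz representation step producing~$w_0\in\mathcal{G}_\chi$ in item~ii) match the paper essentially line by line. The issue is in how you promote the Gibbs--Thomson identity from $B\in\mathcal{S}_\chi$ to all tangential~$B$.

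You propose to identify the trace of~$w_0$ with~$c_0H_\chi$ up to a constant, choose the shift~$\bar w$ so that the trace becomes exactly~$c_0H_\chi$, and then conclude~\eqref{eq:GibbsThomson} ``by linearity.'' But this last step implicitly uses $\delta\mu_t(B)=-\int_\Omega c_0H_\chi\,\tfrac{\nabla\chi}{|\nabla\chi|}\cdot B\,d|\nabla\chi|$ for \emph{every} tangential~$B$, whereas Definition~\ref{def:VarifoldSolution} only asserts~\eqref{eq:repTangentialFirstVariation} for $B\in\mathcal{S}_\chi$; its extension to general tangential fields is a separate (and nontrivial) statement, recorded later as Proposition~\ref{prop:RoegerInterpretation}\,ii) for solutions arising from minimizing movements. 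So the trace route, as written, is circular. There are also secondary difficulties you would have to address along the way: making sense of the trace of~$w_0$ on~$\supp|\nabla\chi|$ at this level of generality, and justifying the claimed density of $\{\tfrac{\nabla\chi}{|\nabla\chi|}\cdot B:B\in\mathcal{S}_\chi\}$ among mean-zero functions in~$L^2(|\nabla\chi|)$.

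The paper avoids all of this with a one-line Lagrange multiplier argument (Lemma~\ref{lem:AbelsRoeger}): fix any tangential $\xi$ with $\int_\Omega\chi\,\nabla\cdot\xi\,dx\neq 0$, write an arbitrary tangential~$B$ as $\tilde B+c\,\xi$ with $\tilde B\in\mathcal{S}_\chi$, and define $\lambda$ by the single scalar equation $\delta\mu_t(\xi)=\int_\Omega\chi\,\nabla\cdot\big((w_0+\lambda)\xi\big)\,dx$. Linearity then gives~\eqref{eq:GibbsThomson} for all tangential~$B$ with $w:=w_0+\lambda$, and choosing~$\xi=\nabla\phi_\varepsilon$ from the mollified Poisson problem you already describe yields the bound~\eqref{eq:L2control} directly from Allard's first variation formula (which controls $|\delta\mu_t(\xi)|$ by $|\mu_t|(\overline\Omega)\,\|\nabla\xi\|_{L^\infty}$) and the Poincar\'e inequality. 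In other words, the ingredients you assembled for the~$L^2$ control are exactly what is needed for the extension step itself; the detour through~$H_\chi$ is both unnecessary and, at the level of hypotheses available here, unjustified.
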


Note that in view of Proposition~\ref{prop:RoegerInterpretation}, item~ii), 
and the trace estimate~\eqref{eq:firstVariationInterior} from below, if $(\chi, \mu)$ is a varifold solution that is a limit point of the minimizing movements scheme (see Section \ref{subsec:keyIngredientsMinMov}), we may in particular 
deduce that 
\begin{equation}\label{eqn:curvature=trace}
c_0 H_\chi(\cdot,t) = w(\cdot,t)
\end{equation} for almost every $t \in (0,T_*)$
up to sets of ($|\nabla\chi(\cdot,t)| \llcorner \Omega$)-measure zero. Via similar arguments, for any varifold solution, (\ref{eqn:curvature=trace}) holds up to a constant, conceptually consistent with (\ref{eqn:LeTraceSpace}) and Subsections \ref{subsec:functionSpaces} and \ref{subsec:relationToLe}.

Next, we show subsequential compactness of our solution concept 
and consistency with classical solutions. To formulate the latter,
we make use of the notion of a time-dependent family $\mathscr{A}=(\mathscr{A}(t))_{t \in [0,T_{*})}$
of smoothly evolving subsets $\mathscr{A}(t) \subset \Omega$, $t \in [0,T_*)$.
%satisfying Mullins--Sekerka flow~\eqref{eq:MullinsSekerka1}--\eqref{eq:MullinsSekerka6}.
More precisely, each set $\mathscr{A}(t)$ is open and
consists of finitely many connected components (the number of which is constant in time).
Furthermore, the reduced boundary of $\mathscr{A}(t)$ in~$\Rd$ differs from its topological
boundary only by a finite number of contact sets on~$\partial\Omega$
(the number of which is again constant in time) represented by 
$\partial(\partial^*\mathscr{A}(t) \cap \Omega)
= \partial(\partial^*\mathscr{A}(t) \cap \partial\Omega) \subset \partial\Omega$.
The remaining parts of~$\partial\mathscr{A}(t)$, i.e., $\partial^*\mathscr{A}(t) \cap \Omega$
and $\partial^*\mathscr{A}(t) \cap \partial\Omega$, are smooth manifolds with boundary
(which for both is given by the contact points manifold).

\begin{lemma}[Properties of the space of varifold solutions]
\label{lem:propertiesVarifoldSolutions}
Let the assumptions and notation of Theorem~\ref{theo:mainResult}
be in place. 
\begin{itemize}[leftmargin=0.7cm]
\item[i)] \emph{(Consistency)} Let $(\chi,\mu)$ be a varifold solution
					for Mullins--Sekerka flow in the sense of Definition~\ref{def:VarifoldSolution}
					which is smooth, i.e., $\chi(x,t) = \chi_{\mathscr{A}}(x,t) := \chi_{\mathscr{A}(t)}(x)$
					for a smoothly evolving family $\mathscr{A}=(\mathscr{A}(t))_{t \in [0,T_{*})}$.
					Furthermore, assume that~\eqref{eq:repTangentialFirstVariation} also holds
					with $\delta\mu_t$ replaced on the left hand side by $\delta E[\chi(\cdot,t)]$
					(which for a $BV$~solution does not represent an additional constraint). 
					Then, $\mathscr{A}$ is a classical solution for Mullins--Sekerka flow in the sense 
					of~\emph{\eqref{eq:MullinsSekerka1}--\eqref{eq:MullinsSekerka6}}. 
					If one assumes in addition that $\smash{\frac{1}{c_0}}\mu_t^\Omega
					\in \mathrm{M}(\overline{\Omega}{\times}\mathbb{S}^{d-1})$ 
					is an integer rectifiable oriented varifold, it also holds that
					\begin{align}
					\label{eq:identityMassMeasures1}
					c_0|\nabla\chi(\cdot,t)| \llcorner \Omega &= |\mu_t^\Omega|_{\mathbb{S}^{d-1}} \llcorner \Omega,
					&& |\mu_t^\Omega|_{\mathbb{S}^{d-1}} \llcorner \partial\Omega = 0,
					\\
					\label{eq:identityMassMeasures2}
					(\cos\alpha)c_0\chi(\cdot,t)\,\mathcal{H}^{d-1}\llcorner\partial\Omega
					&= |\mu_t^{\partial\Omega}|_{\mathbb{S}^{d-1}}
					\end{align}
					for a.e.\ $t \in (0,T_*)$.
					
					Vice versa, any classical solution~$\mathscr{A}$ of Mullins--Sekerka 
					flow~\emph{\eqref{eq:MullinsSekerka1}--\eqref{eq:MullinsSekerka6}}
					gives rise to a (smooth) $BV$~solution~$\chi = \chi_{\mathscr{A}}$ 
					in the sense of Definition~\ref{def:VarifoldSolution}.
\item[ii)] \emph{(Subsequential compactness of the solution space)} Let ${(\chi_k,\mu_k)}_{k\in\mathbb{N}}$
					 be a sequence of $BV$ solutions with initial data~$(\chi_{k,0},\mu_{k,0})$
					 and time horizon $0 < T_* < \infty$ in the sense of Definition~\ref{def:VarifoldSolution}. 
					 Assume that the associated energies $t \mapsto E[(\mu_k)_t]$ 
					 are absolutely continuous functions for all $k \in \mathbb{N}$, that 
					 $\sup_{k \in \mathbb{N}} E[\mu_{k,0}] < \infty$, and that
					 the sequence $(|\nabla\chi_{k,0}|\llcorner\Omega)_{k \in \mathbb{N}}$ 
					 is tight. Then, one may find a subsequence $\{k_n\}_{n\in\mathbb{N}}$, data~$(\chi_0,\mu_0)$,
					 and a varifold solution $(\chi,\mu)$ with initial data~$(\chi_0,\mu_0)$
					 and time horizon~$T_*$ in the sense of Definition~\ref{def:VarifoldSolution} 
					 such that $\chi_{{k_n}} \to \chi$ in $L^1(\Omega{\times}(0,T_*))$ 
					 as well as $\mu_{k_n} \wksto \mu$ in 
					 $\mathrm{M}((0,T_*){\times}\overline{\Omega}{\times}\mathbb{S}^{d-1})$ as $n \to \infty$.
\end{itemize}
\end{lemma}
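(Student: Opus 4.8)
\emph{Strategy.} The plan is to treat the two items separately and by opposite mechanisms. For the consistency statement~\textit{i)}, the crucial observation is that for a smoothly evolving family $\mathscr{A}$ the sharp dissipation inequality~\eqref{eq:DeGiorgiInequality} is forced to hold with \emph{equality}, which pins the velocity to the negative gradient of the energy and thereby recovers the classical system~\eqref{eq:MullinsSekerka1}--\eqref{eq:MullinsSekerka6}; the converse is a direct verification. For the compactness statement~\textit{ii)}, the plan is the standard two-step scheme: extract limits of $(\chi_k,\mu_k)$ from uniform energy and dissipation bounds, then establish lower semicontinuity in each clause of Definition~\ref{def:VarifoldSolution}.

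\emph{Consistency.} First I would use the smoothness of $\mathscr{A}$ to justify the chain rule expressing $\ddt E[\mu_t]$ as the first variation $\delta\mu_t$ applied to a field inducing the (smooth) normal velocity $V_{\partial\mathscr{A}(t)}$; together with the assumed interior representation~\eqref{eq:repTangentialFirstVariation} and the capillary boundary contribution this reproduces the classical energy identity~\eqref{eq:energyIdentity}. Inserting this into~\eqref{eq:DeGiorgiInequality} and applying Young's inequality to the resulting product shows that every inequality in the De~Giorgi chain must be an equality; in particular $\|\partial_t\chi\|_{\mathcal{T}_\chi}$ and the metric slope $|\partial E[\mu_t]|_{\mathcal{V}_\chi}$ coincide and the optimal test field $B\in\mathcal{S}_\chi$ realizing~\eqref{def:MSmetricSlope1} generates precisely $\partial_t\chi$. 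Unwinding this optimality through the Hilbert structure~\eqref{eqn:h-1HilbertStruct} and Lemma~\ref{lem:PDEinterpretation} forces the velocity potential $u$ and the Gibbs--Thomson potential $w$ to agree, yielding a single chemical potential $\overline{u}$ that is harmonic off the interface, satisfies the homogeneous Neumann condition, and for which $V = -(n\cdot\jump{\nabla\overline{u}})n$ and $c_0 H = \overline{u}n$. The contact angle condition~\eqref{eq:MullinsSekerka6} then drops out of the boundary term in the first variation combined with Young's relation~\eqref{eq:YoungsLaw}, since the generalized curvature $H_\chi$ carries no singular mass on $\p\Omega$. Under the integer rectifiability hypothesis, the identities~\eqref{eq:identityMassMeasures1}--\eqref{eq:identityMassMeasures2} follow by combining the compatibility inequalities~\eqref{eq:comp1}--\eqref{eq:comp2} with the now-established energy equality, which leaves no room for excess varifold mass, a constancy-type argument fixing the orientation. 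The converse direction is a direct check: inserting $\chi_{\mathscr{A}}$ and its canonical lift into Definition~\ref{def:VarifoldSolution} and using~\eqref{eq:energyIdentity} to supply the dissipation \emph{equality}.

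\emph{Compactness.} From $\sup_k E[\mu_{k,0}]<\infty$ and~\eqref{eq:DeGiorgiInequality} I would extract uniform bounds on $\sup_t E[(\mu_k)_t]$ and on $\int_0^{T_*}\|\partial_t\chi_k\|_{\mathcal{T}_{\chi_k}}^2 + |\partial E[(\mu_k)_t]|_{\mathcal{V}_{\chi_k}}^2\,dt$. The perimeter part of the energy bounds $\chi_k$ in $L^\infty(0,T_*;BV)$, while the velocity bound controls $\partial_t\chi_k$ in $L^2(0,T_*;H^{-1}_{(0)})$; an Aubin--Lions and Arzel\`{a}--Ascoli argument, upgraded to genuine spatial compactness via the tightness of $(|\nabla\chi_{k,0}|\llcorner\Omega)$, yields $\chi_{k_n}\to\chi$ in $L^1(\Omega{\times}(0,T_*))$ and in $C([0,T_*);H^{-1}_{(0)})$. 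The mass bound gives $\mu_{k_n}\wksto\mu$ in $\mathrm{M}((0,T_*){\times}\overline{\Omega}{\times}\Sph)$ after a further subsequence. It then remains to pass to the limit: the compatibility inequalities~\eqref{eq:comp1}--\eqref{eq:comp2} and the disintegration survive by lower semicontinuity of mass under weak-$*$ convergence, and~\eqref{eq:DeGiorgiInequality} survives by lower semicontinuity of the energy together with that of the two intrinsic quadratic terms.

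\emph{Main obstacle.} The genuinely delicate point is the lower semicontinuity of the two \emph{$\chi$-dependent} quantities $\|\partial_t\chi\|_{\mathcal{T}_\chi}$ and $|\partial E[\mu]|_{\mathcal{V}_\chi}$ along the limit, since the admissible velocity and test spaces $\mathcal{T}_{\chi_k}$ and $\mathcal{S}_{\chi_k}$ move with $\chi_k$ through their support and volume-preservation constraints, so this is a lower semicontinuity statement over a \emph{varying} family of Hilbert spaces rather than a fixed one. I expect to handle it by reformulating both terms variationally as suprema over fixed test objects (as in~\eqref{def:MSmetricSlope1} and the dual characterization of the $H^{-1}_{(0)}$ norm) and using strong $L^1$ convergence of $\chi_k$ to transfer the constraints to the limit. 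Closely tied to this is the passage of the generalized mean curvature: the uniform $L^s$ bound on $H_{\chi_k}$ must be carried through the R\"{o}ger~\cite{Roeger2004} and Sch\"{a}tzle~\cite{Schaetzle2001} lower semicontinuity and compactness results so as to recover~\eqref{eq:repTangentialFirstVariation} for the limit. Finally, one should not expect the limit to remain a \emph{$BV$} solution: without convergence of the energies the canonical-lift identities~\eqref{eq:BVsol1}--\eqref{eq:BVsol2} may fail (the classical Luckhaus--Sturzenhecker gap~\cite{LucStu}), so only a varifold solution is obtained, consistent with the statement.
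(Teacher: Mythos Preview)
Your strategy matches the paper's closely, especially for part~\textit{i)}: the paper likewise derives the smooth energy identity $\frac{d}{dt}E[\chi] = -\int_\Omega \nabla u\cdot\nabla w$, subtracts it from~\eqref{eq:DeGiorgiInequality2}, and completes the square to force $\nabla u=\nabla w$; \eqref{eq:MullinsSekerka1}--\eqref{eq:MullinsSekerka6} then follow as you indicate, with the contact angle read off from the boundary contribution in the first variation formula. One detail you pass over in the converse: showing $|\partial E[\mu_t]|_{\mathcal{V}_\chi}=\|\nabla\bar u\|_{L^2}$ is not entirely formal, since it requires exhibiting some $\bar B\in\mathcal{S}_\chi$ with $\Delta_N^{-1}(\bar B\cdot\nabla\chi)=\bar u$; the paper builds this by extending $n_{\partial\mathscr{A}}\cdot\jump{\nabla\bar u}$ to a tangential $C^1$ field on $\overline\Omega$ compatible with the contact angle geometry.

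For part~\textit{ii)} your scheme is correct but differs from the paper in two respects. First, tightness of $(|\nabla\chi_{k,0}|\llcorner\Omega)$ is not what feeds Aubin--Lions (the uniform energy bound already gives $L^\infty_t BV_x$); its role is to upgrade weak-$*$ convergence of the initial data to strict $BV$ convergence, hence trace convergence in $L^1(\partial\Omega)$, hence $E[\mu_{k,0}]\to E[\mu_0]$, without which the right-hand side of~\eqref{eq:DeGiorgiInequality} would not pass to the limit. Second, the paper does not attempt direct lower semicontinuity of the $\chi$-dependent slope and velocity norms over varying spaces. Instead it reroutes through potentials: from~\eqref{eq:GibbsThomson} one has $w_k$ with $\|\nabla w_k\|_{L^2}^2=|\partial E[(\mu_k)_t]|^2_{\mathcal{V}_{\chi_k}}$, passes to a weak limit $w$ in $L^2_tH^1_x$, re-establishes Gibbs--Thomson for $(\chi,\mu,w)$, and only then reads off $\frac{1}{2}\|\nabla w\|^2\geq\frac{1}{2}|\partial E[\mu_t]|^2_{\mathcal{V}_\chi}$. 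Likewise $\partial_t\chi\in\mathcal{T}_\chi$ is obtained not by lower semicontinuity but by showing $u=\Delta_N^{-1}\partial_t\chi$ is harmonic off $\supp|\nabla\chi|$ via the Sch\"atzle modification~\eqref{eq:ModificationPhase} and a Lebesgue-point argument. In short, the paper simply recycles Steps~2--11 of the proof of Theorem~\ref{theo:mainResult}, noting they depend only on the dissipation inequality and not on the minimizing-movements origin. Your direct variational route should also work, but would require you to construct, for each $B\in\mathcal{S}_\chi$, approximants $B_k\in\mathcal{S}_{\chi_k}$ converging in $C^1$; the potential route sidesteps this.
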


We remark that the above compactness is formulated in terms of BV solutions so that 
the generalized mean curvature (R\"oger's interpretation~\cite{Roeger2004}) is recovered 
in the limit, an argument which requires the use of geometric machinery developed 
by Sch\"atzle for varifolds with first variation given in terms of a Sobolev 
function~\cite{Schaetzle2001}. One can alternatively formulate compactness over the 
space $GMM(\chi_{k,0})$ of generalized minimizing movements, introduced by 
Ambrosio et al.\ \cite{Ambrosio2005}. The space $GMM(\chi_{k,0})$ is given by all limit points 
as $h\to 0$ of the minimizing movements scheme introduced in 
Subsection~\ref{subsec:keyIngredientsMinMov}. By Theorem~\ref{theo:mainResult}, every 
element of $GMM(\chi_{k,0})$ is a varifold solution of Mullins--Sekerka flow with 
initial value $\chi_{k,0}$. Though we do not prove this, a diagonalization argument shows 
that for a sequence of initial data as in Part~ii) of 
Lemma~\ref{lem:propertiesVarifoldSolutions}, $(\chi_k, \mu_k)$ belonging 
to $GMM(\chi_{k,0})$ are precompact and up to a subsequence converge 
to $(\chi,\mu)$ in $GMM(\chi_0)$. Note that this compactness result holds 
without the assumption of absolute continuity of the associated energies $E[(\mu_k)_t]$.

As indicated by the above remark, solutions arising from the minimizing movements 
scheme can satisfy additional properties. In the following proposition, we collect 
both structural and regularity properties for the time-evolving
varifold associated with a solution which is a limit point
of the minimizing movements scheme of Subsection~\ref{subsec:keyIngredientsMinMov}. 
%
%\iffalse We finally collect some further properties for the time-evolving
%varifold associated with a solution which is obtained as a limit point
%of the minimizing movements scheme of Subsection~\ref{subsec:keyIngredientsMinMov}. 
%\fi

\begin{proposition}[Further structure of the evolving varifold 
for limit points of minimizing movements approximation]
\label{prop:RoegerInterpretation} 
Let the assumptions and notation of Theorem~\ref{theo:mainResult}
be in place. Let $(\chi,\mu)$ be a varifold solution for Mullins--Sekerka 
flow~\emph{\eqref{eq:MullinsSekerka1}--\eqref{eq:MullinsSekerka6}} 
with initial data~$(\chi_0,\mu_0)$ in the sense of Definition~\ref{def:VarifoldSolution}.
Assume that $(\chi,\mu)$ is obtained as a limit point of the minimizing movements scheme
(cf.\ Subsection~\ref{subsec:keyIngredientsMinMov}) naturally
associated with Mullins--Sekerka flow~\emph{\eqref{eq:MullinsSekerka1}--\eqref{eq:MullinsSekerka6}}.

Then, the varifold~$\mu$ satisfies the following additional properties:
\begin{itemize}[leftmargin=0.7cm]
\item[i)] \emph{(Stronger compatibility conditions)} 
					Consider some $\eta \in C^\infty(\overline{\Omega};\Rd)$
					such that $n_{\partial\Omega}\cdot \eta = 0$ on~$\partial\Omega$, and consider 
					some $\xi \in C^\infty(\overline{\Omega};\Rd)$
					with $n_{\partial\Omega}\cdot\xi = \cos\alpha$ on~$\partial\Omega$. Then, it holds that
				  \begin{align}
					\label{eq:comp3}
					\int_{\overline{\Omega} {\times} \Sph} s \cdot \eta(x) \,d\mu_t^{\Omega}(x,s) 
					&= \int_{\Omega} c_0 \frac{\nabla\chi(\cdot,t)}{|\nabla\chi(\cdot,t)|} \cdot \eta(\cdot) 
					\,d|\nabla\chi(\cdot,t)|,
					\\ \label{eq:comp4}
					- \int_{\overline{\Omega} {\times} \Sph} s \cdot \xi(x) \,d\mu_t^{\Omega}(x,s) 
					&= - \int_{\Omega} c_0 \frac{\nabla\chi(\cdot,t)}{|\nabla\chi(\cdot,t)|} \cdot \xi(\cdot)
					\,d|\nabla\chi(\cdot,t)|,
					\\&~~~~ \nonumber
					+ |\mu_t^{\p\Omega}|_{\mathbb{S}^{d-1}}(\p\Omega) 
					- \int_{\p\Omega} (\cos\alpha)c_0\chi(\cdot,t) \,\dH
					\end{align}
					for almost every $t \in (0,T_{*})$.
%\item[ii)] \emph{(Structure of the boundary varifold)} There exists 
					 %$\chi^{\p\Omega}\colon\p\Omega{\times}(0,T_*)\to\{0,1\}$ such that				
					 %$\chi^{\p\Omega} \in L^\infty(0,T_*;BV(\p\Omega;\{0,1\}))$ and
					 %\begin{align}
					 %|\mu_t^{\p\Omega}|_{\mathbb{S}^{d-1}} 
					 %= (\cos\alpha)c_0\chi^{\p\Omega}(\cdot,t) \,\mathcal{H}^{d-1}\llcorner\p\Omega
					 %\end{align}
					 %for almost every $t \in (0,T_*)$.
%\item[iii)] \sh{\emph{(Structure of the bulk varifold)} For almost every $t \in (0,T_*)$,
					  %the (unoriented) varifold associated with $\smash{\frac{1}{c_0}}\mu^{\Omega}_t$
					  %is an integer rectifiable varifold. ***For the moment,
					  %not sure how to get this because of the time variable\ldots***}
%\item[iii)] \emph{(Contact angle condition \`{a} la Kagaya and 
					 %Tonegawa~\cite{Kagaya2017}, \cite{Kagaya2018})} It holds
					 %\begin{align}
					 %(\cos\alpha)c_0 \int_{\p\Omega} B \cdot 
					 %\frac{\nabla\chi^{\p\Omega}(\cdot,t)}{|\nabla\chi^{\p\Omega}(\cdot,t)|} 
					 %\,d|\nabla\chi^{\p\Omega}(\cdot,t)|
					 %= (\delta\mu_t^{\Omega} \llcorner \p\Omega)(B)
					 %\end{align}
					 %for almost every $t \in (0,T_*)$ and all $B \in C^1(\p\Omega;\Rd)$
					 %with $B \cdot n_{\p\Omega} = 0$ along~$\p\Omega$.
\item[ii)] \emph{(Integrability of generalized mean curvature vector w.r.t.\ tangential 
					variations, cf.\ R\"{o}ger~\cite{Roeger2005} and Sch\"{a}tzle~\cite{Schaetzle2001})}
					For almost every $t \in (0,T_*)$, the generalized mean curvature~$H_\chi(\cdot,t)$ 
					from item~\textit{ii)} of Definition~\ref{def:VarifoldSolution}
					satisfies~\eqref{eq:repTangentialFirstVariation} not
					only for $B \in \mathcal{S}_{\chi(\cdot,t)}$ but also for all 
					$B \in C^1(\overline{\Omega};\Rd)$ with $B \cdot n_{\p\Omega}=0$ along~$\p\Omega$.
					Furthermore, for each $s \in [2,4]$ if $d = 3$ or else
					 $s \in [2,\infty)$, there exists $C=C(\Omega,d,s,c_0,m_0,\chi_0) > 0$ 
					(independent of~$t$) such that
					\begin{align}
					\label{eq:quantitativeIntCurvature}
					~~~~\bigg(\int_{\Omega} |H_\chi(\cdot,t)|^s 
					\,d|\nabla\chi(\cdot,t)| \bigg)^\frac{1}{s}
					\leq C\big(1 {+} \max\big\{1,\big|\partial 
					E [\mu_t]\big|_{\mathcal{V}_{\chi(\cdot,t)}}^d\big\}
					\big)^{1 {+} \frac{1}{s}}
					\end{align}
					for almost every $t \in (0,T_*)$.
\item[iii)] \emph{(Global first variation estimate on~$\overline{\Omega}$)}
					 For almost every $t \in (0,T_*)$, the oriented varifold $\mu_t$
					 is of bounded first variation on~$\overline{\Omega}$ such that
					 \begin{equation}
					 \label{eq:quantitativeGlobalFirstVarEstimate}
					 \begin{aligned}
					 &\sup\big\{|\delta \mu_t(B)| \colon 
					 B \in C^1(\overline{\Omega}),\, \|B\|_{L^\infty} \leq 1\big\}
					 \\&
					 \leq C\big(1 {+} \max\big\{1,\big|\partial 
					 E [\mu_t]\big|_{\mathcal{V}_{\chi(\cdot,t)}}^d\big\}\big)^{3/2}
					 \end{aligned}
					 \end{equation}
					 for some $C=C(\Omega,d,c_0,m_0,\chi_0) > 0$ independent of~$t$.
\end{itemize}
\end{proposition}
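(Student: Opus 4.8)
The plan is to establish all three items at the level of the time-discrete minimizing movements approximations $(\chi_h,\mu_h)$, where $\mu_h$ is the canonical lift of an honest (almost-)minimizer $\chi_h\in\mathcal{M}_{m_0}$, and then to pass to the limit $h\downarrow 0$. Along the subsequence extracted in the proof of Theorem~\ref{theo:mainResult} (cf.\ Subsection~\ref{subsec:proofMainResult}) I would keep track of the interior and boundary pieces separately, so that $\mu_{h,t}^{\Omega}\wksto\mu_t^{\Omega}$ and $\mu_{h,t}^{\partial\Omega}\wksto\mu_t^{\partial\Omega}$; the disintegrated form \eqref{eq:compSlice2} survives in the limit because the constraint $s=n_{\partial\Omega}(x)$ cuts out a closed subset of the compact space $\overline{\Omega}\times\mathbb{S}^{d-1}$. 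All identities below are first proved in their time-integrated form against $\phi\in C_c^\infty(0,T_*)$ and then localized to a.e.\ $t$.

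For item~i) the key observation is that for the canonical lift the two identities \eqref{eq:comp3}--\eqref{eq:comp4} hold \emph{exactly}: writing $\int s\cdot\eta\,d\mu_{h,t}^{\Omega}=c_0\int\frac{\nabla\chi_h}{|\nabla\chi_h|}\cdot\eta\,d|\nabla\chi_h|$ and integrating by parts with the inner normal $n_{\partial\Omega}$, the boundary term vanishes for tangential $\eta$, while for $\xi$ with $n_{\partial\Omega}\cdot\xi=\cos\alpha$ it produces precisely $(\cos\alpha)c_0\int_{\partial\Omega}\chi_h\,\mathcal{H}^{d-1}=|\mu_{h,t}^{\partial\Omega}|_{\mathbb{S}^{d-1}}(\partial\Omega)$. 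Passing to the limit is immediate for \eqref{eq:comp3} since both sides are continuous under $\mu_{h}^\Omega\wksto\mu^\Omega$ and $\chi_h\to\chi$ in $L^1$. For \eqref{eq:comp4} the only term whose continuity is not obvious is the total boundary mass $|\mu_{h,t}^{\partial\Omega}|_{\mathbb{S}^{d-1}}(\partial\Omega)$; here I would use that $\partial\Omega\times\mathbb{S}^{d-1}$ is compact, so that weak-$*$ convergence of the nonnegative measures $\mu_h^{\partial\Omega}$ tested against the constant~$1$ in the spatial--directional variables forces convergence of their total masses and rules out any loss of mass in the contact region.

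Item~ii) is the technical heart. At each discrete step the minimizer carries a weak mean curvature whose first variation is represented by the discrete chemical potential $w_h\sim -h^{-1}\Delta_N^{-1}(\chi_h-\chi_h^{\mathrm{prev}})\in H^1(\Omega)$, and $\mu_h$ is an \emph{integral} varifold. I would invoke the regularity theory of Sch\"{a}tzle~\cite{Schaetzle2001} and R\"{o}ger~\cite{Roeger2005,Roeger2004} for varifolds whose mean curvature is the trace of an ambient Sobolev function, which in dimension $d\in\{2,3\}$ upgrades $L^2$ to $L^s$ integrability and yields a quantitative bound on $\int|H_{\chi_h}|^s\,d|\nabla\chi_h|$ in terms of $\|w_h\|_{H^1}$ and the perimeter. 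The norm-equivalence \eqref{eq:normEquivalenceGibbsThomsonPotential} together with the linear estimate \eqref{eq:L2control} then convert $\|w_h\|_{H^1}$ into $|\partial E[\mu_{h,t}]|_{\mathcal{V}_{\chi_h}}$, producing the exponent structure of \eqref{eq:quantitativeIntCurvature}; I would pass to the limit by lower semicontinuity of the $L^s$-norms along $|\nabla\chi_h|\wksto|\nabla\chi|$. The extension of \eqref{eq:repTangentialFirstVariation} from $\mathcal{S}_{\chi}$ to all $B$ with $B\cdot n_{\partial\Omega}=0$ then follows because the Gibbs--Thomson representation \eqref{eq:GibbsThomson} already holds for every such $B$. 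I expect the main obstacle to lie exactly here: making the Sch\"{a}tzle--R\"{o}ger estimate quantitative with the stated power $(1+\max\{1,|\partial E|^d\})^{1+1/s}$ and uniform in the time-step, and guaranteeing that the intrinsic curvature survives the limit (this is why one works with $BV$/integral solutions, cf.\ Lemma~\ref{lem:propertiesVarifoldSolutions}).

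Finally, for item~iii) I would integrate by parts in $\delta\mu_t(B)=\int(\mathrm{Id}-s\otimes s):\nabla B\,d\mu_t$ for arbitrary $B\in C^1(\overline\Omega)$. The interior contribution equals $-\int c_0 H_\chi\frac{\nabla\chi}{|\nabla\chi|}\cdot B\,d|\nabla\chi|$ and is bounded by $c_0\|B\|_{L^\infty}\|H_\chi\|_{L^2(|\nabla\chi|)}\,|\nabla\chi|(\Omega)^{1/2}$; the boundary contributions coming from $\mu_t^{\partial\Omega}$, from $\mu_t^{\Omega}\llcorner\partial\Omega$, and from the (bounded, since $\partial\Omega\in C^2$) curvature of $\partial\Omega$ are each controlled by $\|B\|_{L^\infty}$ times the total boundary mass. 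Since the energy dissipation inequality \eqref{eq:DeGiorgiInequality} bounds $E[\mu_t]$ and hence the perimeter and all boundary masses by $E[\mu_0]$, applying item~ii) with $s=2$ turns the interior term into $C(1+\max\{1,|\partial E[\mu_t]|_{\mathcal{V}_{\chi}}^d\})^{3/2}\|B\|_{L^\infty}$, which dominates and yields \eqref{eq:quantitativeGlobalFirstVarEstimate}.
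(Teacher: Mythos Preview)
Your approach to item~i) matches the paper's almost exactly: both work at the level of the De~Giorgi interpolants, use that the approximate varifolds are canonical lifts so that \eqref{eq:comp3}--\eqref{eq:comp4} hold as identities, integrate by parts to replace everything by $-c_0\int\bar\chi^h\nabla\cdot\eta\,dx$ (resp.\ $\xi$), and pass to the limit via $\bar\chi^h\to\chi$ in $L^2$ together with the varifold convergences. The paper avoids your concern about convergence of the boundary mass by bundling it into the left-hand side before passing to the limit; after undoing the integration by parts both sides are recovered simultaneously.

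For item~ii) your route and the paper's genuinely differ. You propose to apply the Sch\"atzle--R\"oger machinery at the \emph{discrete} level and then pass $\int|H_{\chi_h}|^s\,d|\nabla\chi_h|$ to the limit by lower semicontinuity. The paper instead works entirely at the \emph{limit} level: Step~7 of the proof of Theorem~\ref{theo:mainResult} already gives the Gibbs--Thomson law~\eqref{eq:GibbsThomson} for every tangential $B$, and the paper then applies the trace estimate of Proposition~\ref{prop:firstVariationTangential} (a monotonicity-formula argument \`a la Gr\"uter--Jost / Kagaya--Tonegawa combined with Sch\"atzle) directly to the limiting potential $w$. This yields $w\in L^s(|\mu_t|_{\mathbb{S}^{d-1}})$ with the precise power of $\|w\|_{H^1}$ on the right, and since $c_0H_\chi=w$ on $|\nabla\chi|$, the curvature bound~\eqref{eq:quantitativeIntCurvature} follows from \eqref{eq:normEquivalenceGibbsThomsonPotential}, \eqref{eq:L2control}, \eqref{eq:comp1}, and~\eqref{eq:DeGiorgiInequality}. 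The advantage of the paper's route is that it sidesteps exactly the obstacle you flag: the lower semicontinuity of $\int|H_{\chi_h}|^s\,d|\nabla\chi_h|$ when both the integrand and the measure are varying is not obvious (the limit measure is $|\mu_t^\Omega|$, not $|\nabla\chi|$, and identifying the limit curvature with $H_\chi$ through this step would itself require the Sch\"atzle--R\"oger argument at the limit level anyway).

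There is a genuine gap in your argument for item~iii). Your integration-by-parts formula $\delta\mu_t(B)=-\int c_0H_\chi\frac{\nabla\chi}{|\nabla\chi|}\cdot B\,d|\nabla\chi|+\text{(boundary terms)}$ is only established for \emph{tangential} $B$; item~ii) gives nothing for the normal component of $B$ along $\partial\Omega$, and the ``boundary contributions'' you list do not a priori account for the part of $\delta\mu_t^\Omega$ coming from $B\cdot n_{\partial\Omega}\neq 0$. The paper closes this gap by invoking Corollary~\ref{cor:boundedFirstVariationGlobally}, which rests on a recent result of De~Masi: given that $\mu$ has bounded first variation with respect to tangential fields (with $L^1$ generalized mean curvature $H^\Omega$), one obtains automatically a bounded $H^{\partial\Omega}\in L^\infty(\partial\Omega,d|\mu|)$ and a finite Radon measure $\sigma_\mu$ on $\partial\Omega$ such that \eqref{eq:repFirstVariationGlobally} holds for all $B$, together with the estimates \eqref{eq:boundedFirstVariationGlobally}--\eqref{eq:estimateBoundaryMeasure}. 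Only after this step does the bound on $\|w\|_{L^1(|\mu|)}$ (from Proposition~\ref{prop:firstVariationTangential} with $s=2$) translate into~\eqref{eq:quantitativeGlobalFirstVarEstimate}.
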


The proof of the last two items of the previous result 
is based upon the following two auxiliary results,
which we believe are worth mentioning on their own.

\begin{proposition}[First variation estimate up to the boundary for tangential variations]
\label{prop:firstVariationTangential}
Let $d \in \{2,3\}$, let $\Omega \subset \Rd$ be a bounded domain
with orientable $C^2$ boundary~$\p\Omega$, let $w \in H^1(\Omega)$,
let~$\chi\in BV(\Omega;\{0,1\})$,
and let $\mu = |\mu|_{\mathbb{S}^{d-1}} \otimes (\lambda_x)_{x\in\overline{\Omega}}
\in \mathrm{M}(\overline{\Omega}{\times}\mathbb{S}^{d-1})$
be an oriented varifold such that $c_0|\nabla\chi|\llcorner\Omega
\leq |\mu|_{\mathbb{S}^{d-1}}\llcorner\Omega$ in the sense of measures for some constant $c_0>0$.
Assume moreover that the Gibbs--Thomson law holds true in form of
\begin{align}
\label{eq:GibbsThomsonAux}
\int_{\overline{\Omega} {\times} \Sph} (\mathrm{Id} - s \otimes s) : \nabla B \,d\mu
= \int_{\Omega} \chi \nabla \cdot (w B) \,dx
\end{align}
for all tangential variations $B \in C^1(\overline{\Omega};\Rd)$, 
$(B \cdot n_{\partial\Omega})|_{\partial\Omega} \equiv 0$.

There exists $r = r(\p\Omega) \in (0,1)$ such that for all 
$x_0 \in \overline{\Omega}$ with $\dist(x_0,\p\Omega) < r$ and all 
exponents $s \in [2,4]$ if $d = 3$ or otherwise $s \in [2,\infty)$
there exists a constant $C = C(r,s,d) > 0$ such that
\begin{align}
\label{eq:firstVariationTangential}
\bigg(\int_{B_r(x_0) \cap \overline{\Omega}} |w|^s \,d|\mu|_{\mathbb{S}^{d-1}} \bigg)^\frac{1}{s}
\leq C\big(1 + |\mu|_{\mathbb{S}^{d-1}}(\overline{\Omega}) +
\|w\|_{H^{1}(\Omega)}^d\big)^{1 {+} \frac{1}{s}}.
\end{align}
In particular, the varifold~$\mu$ is of bounded variation with respect to tangential variations
(with generalized mean curvature vector~$H^\Omega$ trivially given by
$\rho^\Omega \smash{\frac{w}{c_0}} \smash{\frac{\nabla\chi}{|\nabla\chi|}}$ 
where $\rho^\Omega := \smash{\frac{c_0|\nabla\chi|\llcorner\Omega}
{|\mu|_{\mathbb{S}^{d-1}}\llcorner\Omega}}\in [0,1]$, cf.\ \eqref{eq:GibbsThomsonAux})
and the potential satisfies
\begin{align}
\label{eq:firstVariationInterior}
\bigg(\int_{\overline{\Omega}} |w|^s \,d|\mu|_{\mathbb{S}^{d-1}} \bigg)^\frac{1}{s}
\leq C\big(1 + |\mu|_{\mathbb{S}^{d-1}}(\overline{\Omega}) 
+ \max\big\{1,\|w\|_{H^{1}(\Omega)}^d\big\}\big)^{1 {+} \frac{1}{s}}.
\end{align}
\end{proposition}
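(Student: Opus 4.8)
The plan is to reduce the boundary estimate \eqref{eq:firstVariationTangential} to an interior estimate of Schätzle--Röger type (cf.\ \cite{Schaetzle2001}, \cite{Roeger2004}) by a reflection argument, exploiting that the tangency constraint $B\cdot n_{\p\Omega}=0$ is exactly what keeps the Gibbs--Thomson identity \eqref{eq:GibbsThomsonAux} free of boundary contributions. First I would fix $x_0$ with $\dist(x_0,\p\Omega)<r$ and, using the $C^2$ regularity of $\p\Omega$, straighten the boundary in $B_r(x_0)$ by a $C^2$ diffeomorphism, choosing $r=r(\p\Omega)$ small enough that this map and its inverse are bi-Lipschitz with controlled constants. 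Reflecting the straightened configuration across the (now flat) boundary produces a doubled varifold $\tilde\mu$ on a full ball together with the even extension $\tilde w$ of $w$. The key point is that, because the admissible fields are tangential along $\p\Omega$, the reflected first-variation identity holds in the interior sense up to error terms that are linear in the second fundamental form of $\p\Omega$ and hence bounded by $C(\p\Omega)$ times lower-order quantities.

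The heart of the argument is then the interior estimate for $\tilde\mu$. Rewriting \eqref{eq:GibbsThomsonAux} via the integration by parts $\int_\Omega\chi\,\divv(wB)\,dx=-\int_\Omega w\,\tfrac{\nabla\chi}{|\nabla\chi|}\cdot B\,d|\nabla\chi|$, where the boundary term drops out thanks to $B\cdot n_{\p\Omega}=0$, identifies the generalized mean curvature vector on the interface as $\tfrac{w}{c_0}\tfrac{\nabla\chi}{|\nabla\chi|}$, cf.\ \eqref{eq:repTangentialFirstVariation}; on the remaining mass the compatibility $c_0|\nabla\chi|\llcorner\Omega\le|\mu|_{\mathbb{S}^{d-1}}\llcorner\Omega$ yields the density factor $\rho^\Omega\in[0,1]$. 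Thus $\tilde\mu$ has first variation represented by a curvature whose size is pointwise $\lesssim|\tilde w|$. I would then run the Michael--Simon--Sobolev inequality for the $(d{-}1)$-dimensional varifold $\tilde\mu$ with test function $|\tilde w|^\beta$: since $|H|\sim|\tilde w|$, this produces a recursion raising the integrability exponent, in which the inhomogeneous source terms are controlled by the ambient Sobolev embedding $H^1(\Rd)\hookrightarrow L^{2^*}$ applied to $\tilde w$. The dimensional restriction $d\le 3$ together with the admissible range $s\in[2,4]$ for $d=3$ (resp.\ $s\in[2,\infty)$ for $d=2$) arises precisely from matching the varifold Sobolev exponent $\tfrac{d-1}{d-2}$ against the ambient exponent $2^*=\tfrac{2d}{d-2}$, which is what lets the curvature term $\int|\tilde w|^{\beta+1}\,d|\tilde\mu|$ be reabsorbed; this closes the iteration and yields the bound with the structural power $1+\tfrac1s$ and the factor $\|\tilde w\|_{H^1}^d$.

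Transferring the estimate through the straightening diffeomorphism and restricting to the original half gives \eqref{eq:firstVariationTangential} on $B_r(x_0)\cap\overline\Omega$, with a constant depending only on $r,s,d$, since the geometry of $\p\Omega$ has been absorbed into the choice of $r$. The two ``in particular'' assertions then follow quickly. The identification of $\mu$ as being of bounded variation with respect to tangential variations, with $H^\Omega=\rho^\Omega\tfrac{w}{c_0}\tfrac{\nabla\chi}{|\nabla\chi|}$, is immediate from \eqref{eq:GibbsThomsonAux} once one knows $w\in L^2(d|\mu|_{\mathbb{S}^{d-1}})$, which is the case $s=2$ just established. For the global bound \eqref{eq:firstVariationInterior} I would cover $\overline\Omega$ by finitely many balls: those meeting $\{\dist(\cdot,\p\Omega)<r\}$ are handled by \eqref{eq:firstVariationTangential}, while balls well inside $\Omega$ are handled by the same interior estimate applied directly, where no reflection is needed and one may test with arbitrary (not merely tangential) $B$; summing the finitely many local contributions and bounding local masses by $|\mu|_{\mathbb{S}^{d-1}}(\overline\Omega)$ produces \eqref{eq:firstVariationInterior}.

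The main obstacle I anticipate is the reflection bookkeeping: constructing the doubled oriented varifold so that its mass measure, its disintegration $(\lambda_x)$, and above all its first variation transform correctly under the $C^2$ straightening map, and verifying that the induced curvature-error terms are genuinely lower order and absorbable. A secondary delicate point is the exponent accounting in the Michael--Simon--Sobolev iteration, where the precise range of $s$ and the exact powers $1+\tfrac1s$ and $d$ must be tracked to match the statement; this is the step where the restriction $d\in\{2,3\}$ is essential.
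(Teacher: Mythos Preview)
Your proposal has a circularity that would block the Michael--Simon iteration. You write that ``$\tilde\mu$ has first variation represented by a curvature whose size is pointwise $\lesssim|\tilde w|$'' and then plan to apply the Michael--Simon--Sobolev inequality with test function $|\tilde w|^\beta$. But both of these steps already presuppose that $w$ has a trace in $L^1(d|\mu|_{\mathbb{S}^{d-1}})$. The Gibbs--Thomson relation \eqref{eq:GibbsThomsonAux} gives you only the \emph{Sobolev form} $\delta\mu(B)=\int_\Omega\chi\,\nabla\cdot(wB)\,dx$; converting this to $-\int w\,\tfrac{\nabla\chi}{|\nabla\chi|}\cdot B\,d|\nabla\chi|$ requires the trace, and a priori $\mu$ need not have locally bounded first variation in the $C^0$ sense (only $|\delta\mu(B)|\le C\|w\|_{H^1}\|B\|_{C^1}$ is available). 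Without bounded first variation you have neither Allard nor a direct Michael--Simon statement to invoke; the iteration cannot start.

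The paper avoids this by following Sch\"atzle's idea in the interior and the Gr\"uter--Jost reflection device near the boundary, but crucially \emph{at the level of test vector fields}, not by reflecting the varifold. One builds a specific radial field $\Phi_{x_0,\rho}$ (a combination of $(x-x_0)$ and its reflected analogue $\iota_x(\tilde x-x_0)$, cut off at scale $\rho$) which is tangential on $\p\Omega$ by construction, plugs it into \eqref{eq:GibbsThomsonAux}, and estimates the right-hand side $\int_\Omega\chi\big(w\,\nabla\cdot\Phi_{x_0,\rho}+\Phi_{x_0,\rho}\cdot\nabla w\big)\,dx$ by H\"older and the ambient Sobolev embedding for $w\in H^1(\Omega)$. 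This yields a monotonicity-type inequality for $\rho\mapsto\rho^{-(d-1)}|\mu|_{\mathbb{S}^{d-1}}(B_\rho(x_0)\cap\overline\Omega)$ up to controlled error, hence upper Ahlfors regularity with constant $C(1+|\mu|_{\mathbb{S}^{d-1}}(\overline\Omega)+\|w\|_{H^1}^q)$. Only \emph{after} Ahlfors regularity is in hand does one invoke Meyers--Ziemer/Sch\"atzle trace theory to obtain $\int|w|^s\,d|\mu|_{\mathbb{S}^{d-1}}<\infty$, which is where the exponent restriction and the power $1+\tfrac1s$ come from. In short: monotonicity from the Sobolev-form first variation first, trace second; your ordering is reversed. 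Your reflection bookkeeping worry is also sidestepped by this approach, since nothing is pushed forward through a straightening diffeomorphism---the varifold stays where it is and only the test field is reflected.
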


By a recent work of De~Masi~\cite{DeMasi2021}, one may post-process
the previous result to the following statement.

\begin{corollary}[First variation estimate up to the boundary]
\label{cor:boundedFirstVariationGlobally}
In the setting of Proposition~\ref{prop:firstVariationTangential},
the varifold~$\mu$ is in fact of bounded variation on~$\overline{\Omega}$. 
More precisely, there exist $H^\Omega,\,H^{\partial\Omega}$ and $\sigma_\mu$
with the properties
\begin{align}
H^\Omega &= \rho^\Omega \frac{w}{c_0} \frac{\nabla\chi}{|\nabla\chi|}, \quad
\rho^\Omega := \frac{c_0|\nabla\chi|\llcorner\Omega}
{|\mu|_{\mathbb{S}^{d-1}}\llcorner\Omega} \in [0,1],
\\
H^{\partial\Omega} &\in L^\infty(\partial\Omega,d|\mu|_{\mathbb{S}^{d-1}}),
\quad H^{\partial\Omega}(x) \perp \mathrm{Tan}_x\partial\Omega \,
\text{ for } |\mu|_{\mathbb{S}^{d-1}}\llcorner\partial\Omega\text{-a.e.\ } x \in \overline{\Omega},
\\
\sigma_{\mu} &\in \mathrm{M}(\partial\Omega),
\end{align}
such that the first variation~$\delta\mu$ of~$\mu$ is represented by
\begin{align}
\label{eq:repFirstVariationGlobally}
\delta \mu(B) = - \int_{\overline{\Omega}} (H^\Omega {+} H^{\partial\Omega}) \cdot B \,d|\mu|_{\mathbb{S}^{d-1}}
+ \int_{\partial\Omega} B \cdot n_{\partial\Omega} \,d\sigma_{\mu}
\end{align}
for all $B \in C^1(\overline{\Omega};\Rd)$. Furthermore,
there exists $C = C(\Omega) > 0$ (depending only
on the second fundamental form of the domain boundary~$\partial\Omega$)
such that
\begin{align}
\label{eq:boundedFirstVariationGlobally}
\sup\big\{|\delta \mu(B)| \colon B \in C^1(\overline{\Omega}),\,\|B\|_{L^\infty} \leq 1 \big\}
&\leq C\big(|\mu|_{\mathbb{S}^{d-1}}(\overline{\Omega}) 
{+} \|w\|_{L^1(\overline{\Omega},d|\mu|_{\mathbb{S}^{d-1}})}\big),
\\
\label{eq:estimateBoundaryCurvature}
\|H^{\partial\Omega}\|_{L^\infty(\partial\Omega,d|\mu|_{\mathbb{S}^{d-1}})} &\leq C,
\\
\label{eq:estimateBoundaryMeasure}
\sigma_{\mu}(\partial\Omega) &\leq C|\mu|_{\mathbb{S}^{d-1}}(\overline{\Omega}) 
+ \|w\|_{L^1(\overline{\Omega},d|\mu|_{\mathbb{S}^{d-1}})}.
\end{align}
\end{corollary}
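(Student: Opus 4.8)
The plan is to split the full first variation into a tangential part, which is already completely controlled by Proposition~\ref{prop:firstVariationTangential}, and a genuinely normal part localized near $\partial\Omega$, for which the structural input of De~Masi~\cite{DeMasi2021} is decisive. Since $\partial\Omega$ is $C^2$, the signed distance $d_{\partial\Omega}$ is $C^2$ in a tubular neighborhood of $\partial\Omega$, so that $N := -\nabla d_{\partial\Omega}$ (suitably cut off) defines a global $C^1$ extension of the inner unit normal $n_{\partial\Omega}$ with $\|\nabla N\|_{L^\infty}$ controlled by the second fundamental form $\mathrm{II}_{\partial\Omega}$, hence by $C(\Omega)$. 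I would then decompose any $B\in C^1(\overline{\Omega};\Rd)$ as $B = B^{\mathrm{tan}} + \phi N$ with $\phi := B\cdot N$ and $B^{\mathrm{tan}}\cdot n_{\partial\Omega} = 0$ on $\partial\Omega$. For the tangential piece, Proposition~\ref{prop:firstVariationTangential} directly yields $\delta\mu(B^{\mathrm{tan}}) = -\int_{\overline{\Omega}} H^\Omega\cdot B^{\mathrm{tan}}\,d|\mu|_{\mathbb{S}^{d-1}}$ with $H^\Omega = \rho^\Omega\frac{w}{c_0}\frac{\nabla\chi}{|\nabla\chi|}$, $\rho^\Omega\in[0,1]$, which already accounts for the interior mean-curvature term in~\eqref{eq:repFirstVariationGlobally}.

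For the normal piece, the product rule $\nabla(\phi N) = N\otimes\nabla\phi + \phi\,\nabla N$ gives
\[
\delta\mu(\phi N) = \int_{\overline{\Omega}\times\mathbb{S}^{d-1}} \big(N - (s\cdot N)s\big)\cdot\nabla\phi \, d\mu + \int_{\overline{\Omega}\times\mathbb{S}^{d-1}} \phi\,(\mathrm{Id} - s\otimes s):\nabla N \, d\mu .
\]
The second integral is bounded by $\|\phi\|_{L^\infty}\|\nabla N\|_{L^\infty}|\mu|_{\mathbb{S}^{d-1}}(\overline{\Omega})$ and morally encodes the boundary curvature $H^{\partial\Omega}$, read off from $\mathrm{div}_{\partial\Omega}N$ on $\partial\Omega$; since it is governed purely by $\mathrm{II}_{\partial\Omega}$ it will satisfy the geometric bound~\eqref{eq:estimateBoundaryCurvature} with $C=C(\Omega)$ and be perpendicular to $\mathrm{Tan}_x\partial\Omega$. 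The first integral, however, involves the \emph{full} gradient $\nabla\phi$ and is not a~priori controlled by $\|B\|_{L^\infty}$; this genuinely normal contribution, together with the reorganization of the near-boundary interior mass into the clean boundary-supported objects $H^{\partial\Omega}$ and $\sigma_\mu$, is the crux of the statement and the main obstacle.

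To overcome it I would invoke the recent result of De~Masi~\cite{DeMasi2021}: for a varifold confined to a domain with $C^2$ boundary whose tangential first variation is a finite measure, the full first variation is itself a finite measure on $\overline{\Omega}$ and decomposes into a bulk mean-curvature part and a \emph{normal} boundary measure supported on $\partial\Omega$. The hypotheses are supplied by Proposition~\ref{prop:firstVariationTangential} (boundedness of the tangential first variation and the compatibility $c_0|\nabla\chi|\llcorner\Omega \leq |\mu|_{\mathbb{S}^{d-1}}\llcorner\Omega$), so applying De~Masi's theorem produces a measure $\sigma_\mu\in\mathrm{M}(\partial\Omega)$ acting in the $n_{\partial\Omega}$ direction and yields the representation~\eqref{eq:repFirstVariationGlobally}. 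The main work in this step is the correct translation of his structural output into the present normalization --- matching his boundary measure with $\sigma_\mu$ and checking that the interior density he identifies coincides with the curvature $H^\Omega$ already furnished by the Proposition (which also fixes $H^\Omega$ on $\supp|\nabla\chi|$ via $\rho^\Omega$).

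It then remains to track constants. The tangential first-variation bound fed into De~Masi's quantitative estimate is $\|H^\Omega\|_{L^1(\overline{\Omega},d|\mu|_{\mathbb{S}^{d-1}})}$, and since $|H^\Omega| = \rho^\Omega\frac{|w|}{c_0} \leq \frac{|w|}{c_0}$ this is dominated by $c_0^{-1}\|w\|_{L^1(\overline{\Omega},d|\mu|_{\mathbb{S}^{d-1}})}$, the requisite integrability being guaranteed by~\eqref{eq:firstVariationInterior}. Combined with $\|\nabla N\|_{L^\infty}\leq C(\Omega)$, this gives the global bound~\eqref{eq:boundedFirstVariationGlobally}. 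The estimate~\eqref{eq:estimateBoundaryCurvature} is purely geometric as noted above. Finally,~\eqref{eq:estimateBoundaryMeasure} follows by testing~\eqref{eq:repFirstVariationGlobally} against $B=N$ near $\partial\Omega$ (so $B\cdot n_{\partial\Omega}\equiv 1$ on $\partial\Omega$): solving for $\sigma_\mu(\partial\Omega)$ and absorbing the bulk term through $\|H^\Omega\|_{L^1}\lesssim\|w\|_{L^1(d|\mu|_{\mathbb{S}^{d-1}})}$, the boundary-curvature term through~\eqref{eq:estimateBoundaryCurvature}, and $|\delta\mu(N)|$ through~\eqref{eq:boundedFirstVariationGlobally} gives the claim.
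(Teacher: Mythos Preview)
Your proposal is correct and follows essentially the same route as the paper: identify $H^\Omega$ from the tangential Gibbs--Thomson relation of Proposition~\ref{prop:firstVariationTangential}, then invoke De~Masi~\cite[Theorem~1.1]{DeMasi2021} to obtain $H^{\partial\Omega}$ and $\sigma_\mu$ together with the bounds~\eqref{eq:estimateBoundaryCurvature}--\eqref{eq:estimateBoundaryMeasure}, and finally combine the tangential and normal pieces for general~$B$. The paper's write-up is slightly more streamlined in that it cites De~Masi directly for the structure on normal variations without the preliminary product-rule expansion you perform, and it reads off the bound on $\sigma_\mu(\partial\Omega)$ straight from De~Masi's quantitative output rather than by testing against~$N$; but these are differences of presentation, not of substance.
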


\subsection{A closer look at the functional framework}\label{subsec:functionSpaces}
In this subsection, we characterize the difference between the 
velocity spaces $\mathcal{V}_\chi$ and $\mathcal{T}_{\chi}$, defined
in~\eqref{def:surfaceVelocities} and~\eqref{def:tangentVelocities} 
respectively, by expressing the quotient space $\mathcal{T}_{\chi} / \mathcal{V}_\chi$ 
in terms of a distributional trace space and quasi-everywhere trace space 
(see (\ref{eqn:velocityTraceIsomorphism})). As an application, this result will 
show that if $|\nabla \chi|$ is given by the surface measure of a Lipschitz graph, 
then the quotient space collapses to a point and $\mathcal{V}_{\chi} = \mathcal{T}_\chi.$

Both spaces $\mathcal{V}_\chi$ and $\mathcal{T}_{\chi}$ are spaces of velocities, and associated with 
these will be spaces of potentials where one expects to find the chemical 
potential. For this, we recall that the inverse of the 
weak Neumann Laplacian $\Delta_N^{-1} \colon H^{-1}_{(0)} \to H^1_{(0)}$
is defined by $u_F := \Delta_N^{-1}(F)$, $F \in H^{-1}_{(0)}$, where~$u_F \in H^1_{(0)}$
is the unique weak solution of the Neumann problem
\begin{equation}\label{eqn:lapDef}
\begin{aligned}
\Delta u_F &= F \quad \text{in } \Omega, \\
(n_{\p\Omega} \cdot \nabla) u_F &= 0 \quad \text{on } \partial \Omega.
\end{aligned} 
\end{equation}
Recall also that $\Delta_N^{-1}\colon H^{-1}_{(0)} \to H^1_{(0)}$ 
defines an isometric isomorphism (with respect to the Hilbert
space structures on $H^1_{(0)}$ and $H^{-1}_{(0)}$ defined in
Subsection~\ref{subsec:weakSol}), and since $\Delta_N$ is nothing
else but the Riesz isomorphism for the Hilbert space~$H^1_{(0)}$, the relation  
\begin{equation}\label{eqn:invLapRelation}
(u_F , v)_{H^1_{(0)}} = \langle F, v\rangle_{H^{-1}_{(0)},H^1_{(0)}} 
\end{equation}
holds for all $v\in H^1_{(0)}.$ We then introduce a space of potentials
associated with~$\mathcal{V}_\chi$ given by 
\begin{equation}\label{def:GA}
\mathcal{G}_{\chi} := \Delta_N^{-1} (\mathcal{V}_{\chi}) \subset H^1_{(0)}.
\end{equation}
Likewise we can introduce the space of potentials associated to 
the ``maximal tangent space''~$\mathcal{T}_\chi$ given by 
\begin{equation}\label{def:HA} 
\mathcal{H}_\chi : = \Delta_N^{-1} (\mathcal{T}_{\chi}) \subset H^1_{(0)}.
\end{equation}

To understand the relation between the spaces $\mathcal{V}_\chi$ and $\mathcal{T}_\chi$, we will develop annihilator relations for 
$\mathcal{G}_\chi$ and $\mathcal{H}_\chi$ in $H^1_{(0)}$. Throughout the remainder of this subsection, we identify $H^1_{(0)}$ with $H^1(\Omega)/\mathbb{R}$, the Sobolev space quotiented by constants, which allows us to consider any $v\in H^1(\Omega)$ as an element of $H^1_{(0)}$.

By~\cite[Corollary 9.1.7]{adamsHedberg} of Adams and Hedberg, 
$$\mathcal{T}_{\chi} = \{F \in H^{-1}_{(0)} \colon 
\langle F, v\rangle_{H^{-1}_{(0)},H^1_{(0)}} = 0 \text{ for all }
v\in C^1_c(\Omega \setminus \supp|\nabla\chi|) \}.$$ 
Using~\eqref{eqn:invLapRelation} and~\eqref{def:HA}, this implies that the space
\begin{equation}\label{eqn:annihilator0}
H^1_{0,\supp|\nabla\chi|} : = 
\overline{\{v\in C^1_c(\Omega \setminus \supp|\nabla\chi|)\}}^{H^{1}_{(0)}},
\end{equation}
satisfies the annihilator relation with $(\cdot,\cdot)_{H^1_{(0)}}$ 
\begin{equation}\label{eqn:annihilator1}
H^{1}_{0,\supp|\nabla\chi|} = \mathcal{H}_\chi^{\perp}.
\end{equation}
Further by \cite[Corollary 9.1.4]{adamsHedberg}, for the trace 
operator~${\rm Tr}_{\supp|\nabla\chi|}$ defined ${\rm Cap}_2$ quasi-everywhere 
for~$H^1(\Omega)$-functions, (\ref{eqn:annihilator1}) is the same as 
\begin{equation}\label{eqn:HAannihilator}
\ker\left( {\rm Tr}_{\supp|\nabla\chi|}\right) = \mathcal{H}_\chi^{\perp},
\end{equation}
where $u\in H^1_{(0)}\cap \ker\left( {\rm Tr}_{\supp|\nabla\chi|}\right) $ if and only if $ {\rm Tr}_{\supp|\nabla\chi|} u \equiv c$ for some $c\in \mathbb{R}$.

Similarly, one may use the definition (\ref{eq:actionVel}) and the relation~\eqref{eqn:invLapRelation} to show that
\begin{equation}\label{eqn:GAannihilator}
\mathcal{G}_\chi^{\perp} = \left\{u \in H^1_{(0)}\colon 
\int_\Omega \chi\nabla u \cdot B \, dx  = -\int_\Omega \chi u \nabla \cdot B \, dx
\text{ for all } B \in \mathcal{S}_\chi\right\}.
\end{equation}
For $B\in C^1(\overline\Omega;\Rd)$ such that $\int_\Omega \chi \nabla \cdot B \, dx \neq 0,$ 
one can consider fixed $\xi\in C^1(\overline \Omega;\Rd)$ with $\xi \cdot n_{\partial \Omega} = 0$ 
on $\partial \Omega$ such that $\int_\Omega \chi \nabla \cdot \xi \, dx \neq 0$ and use the 
corrected function $\tilde B  : = B - 
\frac{\int_\Omega \chi \nabla \cdot B \, dx}{\int_\Omega \chi \nabla \cdot \xi \, dx} \xi$ 
in (\ref{eqn:GAannihilator}) to see that the above relation is equivalent to 
\begin{align}
\label{eqn:GAannihilator2}
\mathcal{G}_\chi^{\perp} = \bigg\{u \in H^1_{(0)}& \colon 
 \int_\Omega \chi\nabla (u+c) \cdot B \, dx  = -\int_\Omega \chi (u+c) \nabla \cdot B \, dx 
\\ \nonumber
& \text{ for some } c\in \mathbb{R} \text{ and all } B \in C^1(\bar \Omega;\Rd) \text{ with }B \cdot n_{\partial \Omega} = 0 \text{ on }\partial \Omega\bigg\}.
\end{align}
Thus, $\mathcal{G}_{\chi}^\perp$ is the space functions 
in~$H^1_{(0)}$ which have vanishing trace on~$\supp|\nabla\chi|$ 
in a distributional sense.

We now show that $\mathcal{G}_{\chi} \subset \mathcal{H}_\chi$, which is equivalent to $\mathcal{V}_{\chi} \subset \mathcal{T}_{\chi}$. First
note that~\eqref{eqn:annihilator1} implies
\begin{equation}\label{eqn:HchiHarmonicRelation}
\mathcal{H}_\chi  = \big\{ u\in H^1_{(0)} \colon 
\Delta u = 0 \text{ in } \Omega \setminus \supp|\nabla\chi| \big\}.
\end{equation}
As a technical tool, we remark that for fixed $v\in  C^1_c(\Omega \setminus \supp|\nabla\chi|),$ up to a representative, $v \chi \in C^1_c(\Omega).$ To see this, let $\chi = \chi_A$ for $A\subset \Omega$ and note for any $x\in {\rm supp}\, v$ we can find $r>0$ such that $|B(x,r)\cap A| = |B(x,r)|$ or $|B(x,r)\cap (\Omega \setminus A)| = |B(x,r)|$. We construct a finite cover of ${\rm supp}\, v$ given by $\mathcal{C}: = \cup_{i}B(x_i,r_i)$, and define the set $$\mathcal{C}' := \bigcup_{x_i:|B(x_i,r_i)\cap A| = |B(x_i,r_i)|} B(x_i,r_i).$$
We have that 
$$v \chi  = \begin{cases} v & \text{ in }\mathcal{C}',\\
0 & \text{ otherwise},
 \end{cases}$$
 which is smooth as the balls used in $\mathcal{C}'$ are disjoint from those balls such that $|B(x,r)\cap (\Omega \setminus A)| = |B(x,r)|$, completing the claim. 
Now, for $u \in \mathcal{G}_{\chi}$ given by $u = u_{B\cdot \nabla\chi}$ with $B \in \mathcal{S}_{\chi}$, 
by~\eqref{eqn:invLapRelation} we compute
\begin{equation*}
(u, v)_{H^1_{(0)}} = -\int_{\Omega} \nabla \cdot (vB)\chi \, dx 
=- \int_{\Omega} \nabla \cdot ((v\chi)B) \, dx = 0.
\end{equation*}
It follows that $\mathcal{G}_{\chi} \subset \mathcal{H}_{\chi}$ by (\ref{eqn:annihilator0}) and (\ref{eqn:annihilator1}).

Using the quotient space isomorphism $Y/X \simeq X^{\perp}$ for a closed subspace~$X$ 
of $Y$~\cite[Theorem III.10.2]{conwayFunctional} and the subset relation 
$\mathcal{G}_{\chi} \subset \mathcal{H}_{\chi}$, we have 
$\mathcal{H}_\chi \big/ \mathcal{G}_{\chi} \simeq 
\mathcal{G}_\chi^\perp \big/ \mathcal{H}_{\chi}^\perp$. Consequently, 
unifying the results of this subsection, the following characterization 
of the difference between the velocity spaces follows:
\begin{align}
\label{eqn:velocityTraceIsomorphism}
& \mathcal{T}_{\chi}\big/ \mathcal{V}_\chi 
\simeq \mathcal{H}_\chi \big/ \mathcal{G}_{\chi} \simeq \\ \nonumber
&  \left\{u \in H^1_{(0)}\colon 
\int_\Omega \chi\nabla u \cdot B \, dx  = -\int_\Omega \chi u \nabla \cdot B \, dx 
\text{ for all } B \in \mathcal{S}_\chi\right\}\big/\ker {\rm Tr}_{\supp|\nabla\chi|}.
\end{align}
In summary, the gap in the velocity spaces $\mathcal{V}_{\chi}$ 
and $\mathcal{T}_{\chi}$ is exclusively due to a loss in regularity 
of the interface and amounts to the gap between having the trace in a 
distributional sense (see (\ref{eqn:GAannihilator2})) versus a quasi-everywhere sense.

\subsection{On the relation to Le's functional framework}\label{subsec:relationToLe}
We now have sufficient machinery to discuss our solution concept in relation
to the framework developed by Le in \cite{Le2008}. Within Le's work, the 
critical dissipation inequality for $\Gamma_t := \supp|\nabla\chi(\cdot,t)|$, a $C^3$ space-time interface, to be a solution of the Mullins--Sekerka flow
is given by
\begin{equation*}
E[\chi(\cdot,T)] + \int_{0}^T \frac{1}{2}\| \partial_t \chi \|_{H^{-1/2}(\Gamma_t)}^2 
+ \frac{1}{2}\| H_{\Gamma_t}\|_{H^{1/2}(\Gamma_t)}^2 \, dt \leq E[\chi_0],
\end{equation*}
where $\|H_{\Gamma}\|_{H^{1/2}(\Gamma)} = \|\nabla \tilde f\|_{L^2(\Omega)}$ 
for $\tilde f$ satisfying (\ref{def:LeDirichletProb}) with $f = H_{\Gamma}$ (the curvature)  
and $H^{-1/2}(\Gamma)$ again defined by duality and normed by means of the Riesz representation 
theorem (see also Lemma~2.1 of Le~\cite{Le2008}), 
\begin{equation}\label{eqn:LeH-1/2}
H^{-1/2}(\Gamma) \simeq \Delta_N( \{u \in H^1_{(0)}\colon \Delta u = 0 
\text{ in } \Omega \setminus \Gamma \}).
\end{equation}
As this is simply the image under the weak Neumann Laplacian of functions~$u$ 
associated with the problem~\eqref{def:LeDirichletProb}, we can rewrite this as 
\begin{equation}\label{eqn:LeH-1/22}
H^{-1/2}(\Gamma) = \Delta_N(H^{1/2}(\Gamma)).
\end{equation}

Considering our solution concept now, let $(\chi,\mu)$ be a solution in the sense of 
Definition \ref{def:VarifoldSolution} such that~$\Gamma_t : = {\rm supp}|\nabla \chi(\cdot, t)|$ 
is a Lipschitz surface for a.e.\ $t$. By (\ref{eqn:HchiHarmonicRelation}) 
and (\ref{eqn:LeH-1/2}),  $\mathcal{T}_{\chi(\cdot, t)} = H^{-1/2}(\Gamma_t)$. 
Then as the classical trace space is well-defined, the 
isomorphism (\ref{eqn:velocityTraceIsomorphism}) collapses to the identity showing that 
\begin{equation}\label{eqn:structRelationTH}
 \mathcal{T}_{\chi(\cdot, t)} = \Delta_N (\mathcal{G}_{\chi(\cdot, t)}),
\end{equation} verifying the analogue of (\ref{eqn:LeH-1/22}) and implying that $\mathcal{G}_{\chi(\cdot, t)} = H^{1/2}(\Gamma_t).$ Further, this discussion and
(\ref{eqn:curvature=trace}) (letting $c_0 = 1$ for convenience) show that
\begin{align*}
\| \partial_t \chi \|_{\mathcal{T}_{\chi(\cdot,t)}} 
=  \| \partial_t \chi \|_{H^{-1/2}(\Gamma_t)}
\quad \text{ and } \quad\| w(\cdot,t)\|_{\mathcal{G}_{\chi(\cdot,t)}} 
= \| H_{\Gamma_t}\|_{H^{1/2}(\Gamma_t)}.
\end{align*}
Looking to (\ref{eq:DeGiorgiInequality}) and (\ref{eq:normEquivalenceGibbsThomsonPotential}), we see that our solution concept naturally subsumes Le's, preserves structural relations on the function spaces, and works without any 
regularity assumptions placed on $\Gamma.$ 

Though beyond the scope of our paper, a natural question following from 
the discussion of this subsection and the prior is when does the relation (\ref{eqn:structRelationTH}) or the inclusion $\partial_t \chi 
\in \mathcal{V}_{\chi}\subset \mathcal{T}_\chi$ hold. By (\ref{eqn:velocityTraceIsomorphism}), both will
follow if $zero$ distributional trace is equivalent to having $zero$ trace in 
the quasi-everywhere sense. Looking towards results on traces~(see, e.g., \cite{chenLiTorres}, \cite{meyers1978}, and \cite{meyersZiemer1977}), characterization of this condition will be a nontrivial 
result, and applying similar ideas to the Mullins--Sekerka flow may require a fine 
characterization of the singular set from Allard's regularity theory~\cite{Allard1972}.

\subsection{Motivation from the viewpoint of weak-strong uniqueness}
\label{subsec:PDEperspective}
Another major motivation for our weak solution concept,
especially for the inclusion of a sharp energy dissipation principle, is drawn from the 
recent progress on uniqueness properties of weak solutions for various
curvature driven interface evolution problems. More precisely,
it was established that for incompressible Navier--Stokes two-phase flow with 
surface tension~\cite{Fischer2020c} and for multiphase mean curvature flow~\cite{Fischer2020a}
(cf.\ also~\cite{Hensel2021l} or~\cite{Hensel2021}), weak solutions with 
sharp energy dissipation rate are unique within a class of sufficiently regular 
strong solutions (as long as the latter exist, i.e., until they undergo a topology change). 
Such weak-strong uniqueness principles are optimal in the sense that weak
solutions in geometric evolution may in general be non-unique after the first topology change.
Extensions to constant contact angle problems as considered in the present work
are possible as well, see~\cite{Hensel2021m} for Navier--Stokes two-phase flow
with surface tension or~\cite{Hensel2021d} for mean curvature flow.

The weak-strong uniqueness results of the previous works rely on a Gronwall stability estimate
for a novel notion of distance measure between a weak and a sufficiently regular strong
solution. The main point is that this distance measure is in particular able 
to penalize the difference in the location of the two associated interfaces in 
a sufficiently strong sense. 
Let us briefly outline how to construct such a distance measure
in the context of the present work (i.e., interface evolution
in a bounded container with constant contact angle~\eqref{eq:MullinsSekerka6}).
To this end, it is convenient to assume next to~\eqref{eq:comp1} and~\eqref{eq:comp2} 
the two additional compatibility conditions~\eqref{eq:comp3} and~\eqref{eq:comp4}
from Proposition~\ref{prop:RoegerInterpretation}. Under these additional assumptions, 
we claim that the following functional represents a natural candidate for the desired 
error functional:
\begin{align*}
E_{\mathrm{rel}}[\chi,\mu|\mathscr{A}](t) 
&:= |\mu_t|_{\mathbb{S}^{d-1}}(\overline{\Omega}) 
- \int_{\p^*A(t) \cap \Omega} c_0 \frac{\nabla\chi(\cdot,t)}{|\nabla\chi(\cdot,t)|} 
\cdot \xi(\cdot,t) \,d|\nabla\chi(\cdot,t)|
\\&~~~~
- \int_{\p\Omega} (\cos\alpha)c_0\chi(\cdot,t) \,\dH,
\end{align*} 
where $\xi(\cdot,t)\colon\overline{\Omega}\to\{|x| {\leq} 1\}$ denotes a suitable extension of the
unit normal vector field~$n_{\partial\mathscr{A}(t)}$ of~$\p\mathscr{A}(t) \cap \Omega$. Due 
to the compatibility conditions~\eqref{eq:comp1} and~\eqref{eq:comp2} 
as well as the length constraint~$|\xi| \leq 1$,
it is immediate that~$E_{\mathrm{rel}} \geq 0$. The natural
boundary condition for~$\xi(\cdot,t)$ turns out to be 
$(\xi(\cdot,t) \cdot n_{\partial\Omega})|_{\partial\Omega} \equiv \cos\alpha$.
Indeed, this shows by means of an integration by parts that
\begin{align*}
E_{\mathrm{rel}}[\chi,\mu|\mathscr{A}](t)
= |\mu_t|_{\mathbb{S}^{d-1}}(\overline{\Omega}) 
+ \int_{\Omega} c_0\chi(\cdot,t) \nabla \cdot \xi \,dx.
\end{align*}
The merit of the previous representation of~$E_{\mathrm{rel}}$ is that
it allows one to compute the time evolution of~$E_{\mathrm{rel}}$ relying
in a first step only on the De~Giorgi inequality~\eqref{eq:DeGiorgiInequality} 
and using $\nabla \cdot \xi$ as a test function in
the evolution equation~\eqref{eq:evolEquationPhase}. 
Furthermore, the compatibility condition~\eqref{eq:comp4} yields that
\begin{align*}
\int_{\overline{\Omega} {\times} \Sph} \frac{1}{2} |s - \xi|^2 \,d\mu_t^\Omega
\leq \int_{\overline{\Omega} {\times} \Sph} 1  - s \cdot \xi \,d\mu_t^\Omega 
= E_{\mathrm{rel}}[\chi,\mu|\mathscr{A}](t),
\end{align*}
which in turn implies a tilt-excess type control provided by~$E_{\mathrm{rel}}$
at the level of the varifold interface. Further coercivity properties
may be derived based on the compatibility conditions~\eqref{eq:comp1} and~\eqref{eq:comp2}
in form of the associated Radon--Nikod\`ym derivatives
$\rho^{\Omega}_t := \frac{c_0|\nabla\chi(\cdot,t)|\llcorner\Omega}
{|\mu_t^{\Omega}|_{\mathbb{S}^{d-1}}\llcorner\Omega} \in [0,1]$
and $\rho^{\p\Omega}_t := \frac{(\cos\alpha)c_0\chi(\cdot,t)\mathcal{H}^{d-1}
\llcorner\p\Omega}{|\mu_t^{\p\Omega}|_{\mathbb{S}^{d-1}}
+ |\mu_t^{\Omega}|_{\mathbb{S}^{d-1}} \llcorner \p\Omega} \in [0,1]$, respectively.
More precisely, one obtains the representation
\begin{align*}
&E_{\mathrm{rel}}[\chi,\mu|\mathscr{A}](t)
\\&
= \int_{\Omega} 1 - \rho_t^{\Omega} \,d|\mu_t^{\Omega}|_{\mathbb{S}^{d-1}}
+ \int_{\p\Omega} 1 - \rho_t^{\p\Omega} 
\,d\big(|\mu_t^{\Omega}|_{\mathbb{S}^{d-1}} {+} |\mu_t^{\p\Omega}|_{\mathbb{S}^{d-1}}\big)
\\&~~~
+ \int_{\Omega} c_0\Big(1 - \frac{\nabla\chi(\cdot,t)}{|\nabla\chi(\cdot,t)|} \cdot \xi(\cdot,t)\Big) 
\,d|\nabla\chi(\cdot,t)|.
\end{align*}
The last of these right hand side terms ensures tilt-excess type control
at the level of the $BV$~interface
\begin{align*}
c_0 \int_{\Omega} \frac{1}{2} 
\Big|\frac{\nabla\chi(\cdot,t)}{|\nabla\chi(\cdot,t)|} - \xi(\cdot,t) \Big|^2 
\,d|\nabla\chi(\cdot,t)|
\leq E_{\mathrm{rel}}[\chi,\mu|\mathscr{A}](t).
\end{align*}
The other three simply penalize the well-known mass defects 
(i.e., mass moving out from the bulk to the domain boundary, or
the creation of hidden boundaries within the bulk) originating from the 
lack of continuity of the perimeter functional under weak-$*$ convergence in~$BV$.

In summary, the requirements of Definition~\ref{def:VarifoldSolution}
(together with the two additional mild compatibility conditions~\eqref{eq:comp3} and~\eqref{eq:comp4})
allow one to define a functional which on one side penalizes, in various ways, the 
``interface error'' between a varifold and a classical solution, and which 
on the other side has a structure supporting at least in principle the idea
of proving a Gronwall-type stability estimate for it. One therefore may hope that
varifold solutions for Mullins--Sekerka flow in the sense of Definition~\ref{def:VarifoldSolution}
satisfy a weak-strong uniqueness principle together with a weak-strong
stability estimate based on the above error functional. In the  
simplest setting of $\alpha = \smash{\frac{\pi}{2}}$, a $BV$~solution~$\chi$, 
and assuming no boundary contact for the interface of the classical solution~$\mathscr{A}$, 
this is at the time of this writing work in progress~\cite{Fischer2022}.

For the present contribution, however, we content ourselves with the above existence result
(i.e., Theorem~\ref{theo:mainResult}) for varifold solutions to Mullins--Sekerka flow in the sense of 
Definition~\ref{def:VarifoldSolution} together with establishing further properties of these.

%%%%%%%%%%%%%%%%%%%%%%%%%%%%%%%%%%%%%%%%%%%%%%%%%%%%%%%%%%%%%%%%%%%%%%%%%%%%%%%%%%%%%%%%%%%%%%%%
\section{Existence of varifold solutions to Mullins--Sekerka flow}

\subsection{Key players in minimizing movements}
\label{subsec:keyIngredientsMinMov}
To construct weak solutions for the Mullins--Sekerka 
flow~\eqref{eq:MullinsSekerka1}--\eqref{eq:MullinsSekerka6}
in the precise sense of Definition~\ref{def:VarifoldSolution},
it comes at no surprise that we will employ the gradient flow perspective in the form of a
minimizing movements scheme, which we pass to the limit. 
Given an initial condition $\chi_0 \in \mathcal{M}_{m_0}$ (see (\ref{def:manifold})), a fixed time step size $h \in (0,1)$, and $E$ as in (\ref{def:energy}), we let $\chi^h_0:=\chi_0$ 
and choose inductively for each $n\in\mathbb{N}$
\begin{align}
\label{eq:minMovStep}
\chi^h_n \in \argmin_{\widetilde \chi \in \mathcal{M}_{m_0}}
\bigg\{E[\widetilde \chi\,] + \frac{1}{2h}
\big\|\chi^h_{n-1} - \widetilde \chi\,\big\|^2_{H^{-1}_{(0)}}\bigg\},
\end{align}
an approximation via the backward-Euler scheme.
Note that this minimization problem is indeed solvable by the
direct method in the calculus of variations; see, for instance,
the result of Modica~\cite[Proposition 1.2]{Modica1987}
for the lower-semicontinuity of the capillary energy.

By a telescoping argument, it is immediate 
that the associated piecewise constant interpolation
\begin{align}
\label{eq:minMovPiecewiseInterpol}
\chi^h(t) := \chi^h_{n-1} \quad \text{for all } t \in [(n{-}1)h,nh),\,n\in\mathbb{N},
\end{align}
satisfies the energy dissipation estimate
\begin{align}
\label{eq:minMovSuboptimalEnergyDissip}
E[\chi^h(T)] + \int_{0}^{T} \frac{1}{2h^2} 
\big\|\chi^h(t{+}h) - \chi^h(t)\big\|^2_{H^{-1}_{(0)}} \,dt
\leq E[\chi_0] \quad \text{for all } T \in \mathbb{N}h.
\end{align}

Although the previous inequality is already enough for usual compactness arguments,
it is obviously not sufficient, however, to establish the expected sharp energy dissipation
inequality (cf.\ \eqref{eq:energyIdentity}) in the limit as $h \to 0$. 
It goes back to ideas of De~Giorgi how to capture the remaining
half of the dissipation energy at the level of the minimizing movements scheme,
versus, for example, recovering the dissipation from the regularity of a solution to the limit equation. The key
ingredient for this is a finer interpolation than the piecewise constant one,
which in the literature usually goes under the name of De~Giorgi (or variational) interpolation
and is defined as follows:
\begin{align}
\nonumber
&\bar{\chi}^h((n{-}1)h) := {\chi}^h((n{-}1)h) = \chi^h_{n-1}, \quad n \in \mathbb{N},
\\ \label{eq:minMovVariationalInterpol}
&\bar{\chi}^h(t) \in \argmin_{\widetilde \chi \in \mathcal{M}_{m_0}}
\bigg\{E[\widetilde \chi\,] {+} \frac{1}{2(t{-}(n{-}1)h)}
\big\|\bar{\chi}^h((n{-}1)h) - \widetilde \chi\,\big\|^2_{H^{-1}_{(0)}}\bigg\},
\, t \in ((n{-}1)h,nh).
\end{align}
The merit of this second interpolation consists of the following
improved (and now sharp) energy dissipation inequality 
\begin{align}
\label{eq:minMovSharpEnergyDissip}
E[\chi^h(T)] + \int_{0}^{T} \frac{1}{2h^2} \big\|\chi^h(t{+}h) - \chi^h(t)\big\|^2_{H^{-1}_{(0)}} \,dt
+ \int_{0}^{T} \frac{1}{2} \big|\p E[\bar{\chi}^h(t)]\big|_{\mathrm{d}}^2 \,dt \leq E[\chi_0],
\end{align}
with $T \in \mathbb{N}h$~\cite{Ambrosio2005}. 
The quantity~$|\p E[\chi]|_{\mathrm{d}}$ is usually referred to as the metric slope
of the energy~$E$ at a given point~$\chi \in \mathcal{M}_{m_0}$, and
in our context may more precisely be defined by
\begin{align}
\label{eq:minMovMetricSlope}
\big|\p E[\chi]\big|_{\mathrm{d}} := 
\limsup_{\widetilde \chi \in \mathcal{M}_{m_0}\colon \|\chi - \widetilde \chi\|_{H^{-1}_{(0)}} \to 0} 
\frac{\big(E[\chi] - E[\widetilde \chi]\big)_+}{\big\|\chi - \widetilde \chi\big\|_{H^{-1}_{(0)}}}.
\end{align}
We remind the reader that~\eqref{eq:minMovSharpEnergyDissip} is a general
result for abstract minimizing movement schemes requiring only to work on a metric space.
However, as it turns out, we will be able to preserve a formal manifold structure even in the limit.
This in turn is precisely the reason why the ``De~Giorgi metric slope'' appearing in 
our energy dissipation inequality~\eqref{eq:DeGiorgiInequality} is computed
only in terms of inner variations, see~\eqref{def:MSmetricSlope2}.

With these main ingredients and properties of the 
minimizing movements scheme in place, our main task now consists of passing to the limit $h \to 0$
and identifying the resulting (subsequential but unconditional) limit object 
as a varifold solution to Mullins--Sekerka flow~\eqref{eq:MullinsSekerka1}--\eqref{eq:MullinsSekerka6}
in our sense. Furthermore, to obtain a $BV$~solution, we will \textit{additionally assume},
following the tradition of Luckhaus and Sturzenhecker~\cite{LucStu}, that 
for a subsequential limit point~$\chi$ obtained from~\eqref{eqn:chiStrongConvergence}
below, it holds that
\begin{align}
\label{eq:energyConvergenveAssumpMinMov}
\int_0^{T_*} E[\chi^h(t)] \,dt \to \int_0^{T_*} E[\chi(t)] \,dt.
\end{align}

\subsection{Three technical auxiliary results}
For the Mullins--Sekerka equation, a mass preserving flow, it will be helpful 
to construct ``smooth" mass-preserving flows corresponding to infinitesimally 
mass-preserving velocities, i.e., velocities in the test function
class~$\mathcal{S}_\chi$ (see~(\ref{def:variationVelocities})).
Using these flows as competitors in~(\ref{eq:minMovVariationalInterpol}) 
and considering the associated Euler--Lagrange equation, it becomes apparent 
that an approximate Gibbs--Thomson relation holds for infinitesimally mass-preserving 
velocities. To extend this relation to arbitrary variations (tangential at the boundary) 
we must control the Lagrange multiplier arising from the mass constraint. Though the first 
lemma and the essence of the subsequent lemma is contained in the work of Abels and 
R\"oger~\cite{AbelsRoeger} or Chen~\cite{Chen1992}, we include the proofs for both completeness 
and to show that the result is unperturbed if the energy exists at the varifold level.

\begin{lemma}
\label{lem:existenceDiffeos}
Let $\chi \in \mathcal{M}_0$ and $B \in \mathcal{S}_\chi$. Then there exists $\eta > 0$ and a 
family of $C^1$~diffeomorphisms $\Psi_s\colon \overline{\Omega} \to \overline{\Omega}$ 
depending differentiably on $s\in (-\eta,\eta)$ such that $\Psi_0(x) = x$, 
$\partial_s \Psi_s(x) |_{s=0} = B(x),$ and 
\begin{align*}
\int_\Omega \chi\circ \Psi_s^{-1} = m_0 \quad \text{ for all } s \in (-\eta,\eta).
\end{align*}
\end{lemma}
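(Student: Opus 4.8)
The plan is to build $\Psi_s$ by following the flow generated by $B$ and then composing with a small auxiliary flow whose duration is tuned, via the implicit function theorem, to restore the exact volume. The only place where $B\in\mathcal{S}_\chi$ is used is in the infinitesimal volume-preservation $\int_\Omega\chi\,\divv B\,dx=0$, and this identity is precisely what makes the correction second order, so that the prescribed derivative $\p_s\Psi_s|_{s=0}=B$ survives.

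First I would fix an auxiliary field. Writing $A:=\{\chi{=}1\}$ with $\mathcal{L}^d(A)=m_0\in(0,\mathcal{L}^d(\Omega))$, the connectedness of $\Omega$ together with $0<m_0<\mathcal{L}^d(\Omega)$ forces $|\nabla\chi|(\Omega)>0$, so the reduced boundary $\p^*A\cap\Omega$ is nonempty. Selecting a point therein and a compactly supported bump aligned with the measure-theoretic inner normal produces a field $\xi\in C^1(\overline{\Omega};\Rd)$ with $\supp\xi\subset\subset\Omega$ (hence $\xi\cdot n_{\p\Omega}=0$ on $\p\Omega$) and $\int_\Omega\chi\,\divv\xi\,dx\neq0$. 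Since $B,\xi\in C^1(\overline{\Omega};\Rd)$ are tangent to the $C^2$ boundary $\p\Omega$, standard ODE theory and the invariance of $\overline{\Omega}$ under tangent fields yield two families of $C^1$ diffeomorphisms $\Phi_s,\Theta_\tau\colon\overline{\Omega}\to\overline{\Omega}$ solving $\p_s\Phi_s=B\circ\Phi_s$, $\Phi_0=\Id$ and $\p_\tau\Theta_\tau=\xi\circ\Theta_\tau$, $\Theta_0=\Id$, depending differentiably on their parameters for $|s|,|\tau|$ small (small enough that the Jacobians stay positive, so that $|\det|=\det$).

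I would then set up the correction by defining, near the origin,
\[
F(s,\tau):=\int_\Omega\chi\,\det D(\Theta_\tau\circ\Phi_s)\,dx-m_0=\mathcal{L}^d\big((\Theta_\tau\circ\Phi_s)(A)\big)-m_0.
\]
Differentiating under the integral sign (justified by the $C^1$ dependence of the Jacobian on $(s,\tau)$ and dominated convergence over the compact set $\overline{\Omega}$), and using that $\tfrac{d}{d\epsilon}\det D\Psi_\epsilon|_{\epsilon=0}=\divv(\p_\epsilon\Psi_\epsilon|_{\epsilon=0})$ whenever $\Psi_0=\Id$, I obtain $F(0,0)=0$ together with
\[
\p_\tau F(0,0)=\int_\Omega\chi\,\divv\xi\,dx\neq0,\qquad \p_s F(0,0)=\int_\Omega\chi\,\divv B\,dx=0,
\]
the last equality being exactly the defining constraint of $\mathcal{S}_\chi$. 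The implicit function theorem then supplies a $C^1$ map $s\mapsto\tau(s)$ on some $(-\eta,\eta)$ with $\tau(0)=0$, $F(s,\tau(s))\equiv0$, and $\tau'(0)=-\p_sF(0,0)/\p_\tau F(0,0)=0$. Setting $\Psi_s:=\Theta_{\tau(s)}\circ\Phi_s$ gives a $C^1$ family of diffeomorphisms of $\overline{\Omega}$ with $\Psi_0=\Id$; the chain rule yields $\p_s\Psi_s|_{s=0}=\tau'(0)\,\xi+B=B$, while $\int_\Omega\chi\circ\Psi_s^{-1}\,dx=\mathcal{L}^d(\Psi_s(A))=F(s,\tau(s))+m_0=m_0$ for all $s\in(-\eta,\eta)$.

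I expect the genuine content to sit in two spots: the computation $\p_sF(0,0)=0$, where the structure of $\mathcal{S}_\chi$ enters and propagates through $\tau'(0)=0$ to the required normalization $\p_s\Psi_s|_{s=0}=B$; and the construction of an admissible $\xi$ with $\int_\Omega\chi\,\divv\xi\neq0$, which rests on the nonvanishing of the perimeter of $A$ inside $\Omega$. The remaining ingredients—invariance of $\overline{\Omega}$ under the two flows, and differentiation under the integral—are routine given $B,\xi\in C^1(\overline{\Omega};\Rd)$ and $\p\Omega\in C^2$.
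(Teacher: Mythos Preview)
Your proposal is correct and follows essentially the same route as the paper: construct an auxiliary tangential field $\xi$ with $\int_\Omega\chi\,\divv\xi\,dx\neq0$, compose the flows of $B$ and $\xi$, and apply the implicit function theorem to the volume defect to obtain a correction $\tau(s)$ with $\tau'(0)=0$ (this last point being precisely where $B\in\mathcal{S}_\chi$ enters). The only cosmetic differences are the order of composition and that you supply an explicit construction of $\xi$ via a bump at a point of $\p^*A\cap\Omega$, whereas the paper simply asserts the existence of such a $\xi$.
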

\begin{proof}
Fix $\xi \in C^{\infty}(\overline \Omega)$ such 
that $(\xi \cdot n_{\partial \Omega})|_{\partial\Omega} \equiv 0$ 
and $\int_\Omega  \chi \nabla\cdot\xi \, dx \neq 0.$ Naturally associated to $B$ and 
$\xi$ are flow-maps $\beta_s$ and $\gamma_r$ solving $\partial_s \beta_s(x) = B(\beta_s(x))$ 
and $\partial_r \gamma_r(x) = \xi(\gamma_r(x))$, each with initial condition given by the identity 
map, i.e., $\beta_0(x) = x$. Define the function $f$, which is locally differentiable 
near the origin, by
\begin{align*}
f(s,r) := \int_\Omega \chi \circ(\beta_s \circ \gamma_r)^{-1} - m_0  = 
\int_\Omega \chi \left({\rm det} (\nabla (\beta_s \circ \gamma_r) )-1 \right) dx .
\end{align*}
As $f(0,0) = 0$ and $\partial_r f(0,0) = \int_\Omega  \chi \nabla \cdot \xi \, dx\neq 0$ 
by assumption, we may apply the implicit function theorem to find a differentiable function 
$r = r(s)$ with $r(0) = 0$ such that $f(s,r(s)) = 0$ for $s$ near $0.$
We can further compute that (see (\ref{eqn:detTaylor}))
\begin{align*}
\partial_s f(0,0) = \int_\Omega \chi \big(\nabla \cdot B
+ r'(0) \nabla \cdot \xi \big) \, dx.
\end{align*}
Rearranging, we find 
\begin{align*}
r'(0) = - \frac{\int_\Omega \chi \nabla \cdot B\, dx}
{\int_\Omega \chi \nabla\cdot\xi \, dx} = 0,
\end{align*} 
and thus the flow given by $\beta_s \circ \gamma_{r(s)}$ 
satisfies $\partial_s (\beta_s \circ \gamma_{r(s)})|_{s=0} = B + r'(0)\xi = B$, thereby 
providing the desired family of diffeomorphisms.
\end{proof}

\begin{lemma}
\label{lem:AbelsRoeger}
Let $\chi \in \mathcal{M}_0$, $w\in H^1_{(0)}$, and $\mu \in 
\mathrm{M}(\overline{\Omega}{\times}\mathbb{S}^{d-1})$ be an oriented varifold such that
\begin{equation}\label{eqn:varifoldVarSoboHyp}
\delta \mu (B) = \int_\Omega \chi  \nabla \cdot (wB)\, dx \quad \text{ for all }B \in \mathcal{S}_{\chi}.
\end{equation} 
Then there is $\lambda \in \mathbb{R}$ such that
\begin{equation*}
\delta \mu (B) = \int_\Omega \chi \nabla \cdot \big((w {+} \lambda)B\big) \, dx 
\quad \text{ for all }B \in C^1(\overline \Omega;\Rd) 
\text{ with } (B \cdot n_{\partial \Omega})|_{\partial\Omega} \equiv 0
\end{equation*} 
and there exists $C = C(\Omega,d,c_0,m_0)$
\begin{equation}
\label{eq:boundH1NormPotential}
\|w {+} \lambda\|_{H^1(\Omega)} \leq C(1+|\nabla \chi|(\Omega)) 
\left( |\mu|(\overline \Omega) + \|\nabla w\|_{L^2(\Omega)}\right).
\end{equation}
\end{lemma}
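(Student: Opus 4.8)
The plan is to reduce the statement to two essentially independent tasks: an algebraic extension step that introduces and identifies the multiplier $\lambda$, and a quantitative step that bounds $\lambda$. The first is routine; the entire difficulty sits in the second, and within it in the choice of a single good test field.

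For the extension, I would fix once and for all a field $\xi\in C^1(\overline\Omega;\Rd)$ with $(\xi\cdot n_{\partial\Omega})|_{\partial\Omega}\equiv 0$ and $\int_\Omega\chi\nabla\cdot\xi\,dx\neq 0$; such a $\xi$ exists because $0<m_0<\mathcal L^d(\Omega)$ forces $\chi$ to be non-constant, and this is exactly the normalization already used in Lemma~\ref{lem:existenceDiffeos} and around~\eqref{eqn:GAannihilator2}. Given an arbitrary tangential $B\in C^1(\overline\Omega;\Rd)$, I set $\tilde B:=B-\big(\int_\Omega\chi\nabla\cdot B\,dx\big)\big(\int_\Omega\chi\nabla\cdot\xi\,dx\big)^{-1}\xi$, so that $\tilde B\in\mathcal S_\chi$. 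Applying the hypothesis~\eqref{eqn:varifoldVarSoboHyp} to $\tilde B$ and exploiting that both $\delta\mu$ and $B\mapsto\int_\Omega\chi\nabla\cdot(wB)\,dx$ are linear, I solve for $\delta\mu(B)$ and read off
\[
\delta\mu(B)=\int_\Omega\chi\nabla\cdot\big((w{+}\lambda)B\big)\,dx,\qquad
\lambda:=\frac{\delta\mu(\xi)-\int_\Omega\chi\nabla\cdot(w\xi)\,dx}{\int_\Omega\chi\nabla\cdot\xi\,dx}.
\]
This yields the first assertion with an explicit $\lambda$ that is independent of $B$ (and, one checks, independent of the admissible $\xi$, which is what later licenses optimizing over $\xi$).

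For the norm bound, since $w\in H^1_{(0)}$ has vanishing average the constant $\lambda$ is $L^2(\Omega)$-orthogonal to $w$, so $\|w{+}\lambda\|_{L^2}^2=\|w\|_{L^2}^2+\lambda^2\mathcal L^d(\Omega)$, while $\nabla(w{+}\lambda)=\nabla w$; together with the mean-zero Poincaré inequality $\|w\|_{L^2}\leq C(\Omega)\|\nabla w\|_{L^2}$ this reduces everything to estimating $|\lambda|$. The numerator is harmless: the first-variation structure gives $|\delta\mu(\xi)|\leq C(d)\|\nabla\xi\|_{L^\infty}|\mu|(\overline\Omega)$, and expanding $\nabla\cdot(w\xi)=\xi\cdot\nabla w+w\nabla\cdot\xi$, using $0\leq\chi\leq1$, Cauchy--Schwarz and Poincaré, gives $|\int_\Omega\chi\nabla\cdot(w\xi)\,dx|\leq C(\Omega)\|\xi\|_{C^1}\|\nabla w\|_{L^2}$. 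Hence
\[
|\lambda|\leq C(\Omega,d)\,\frac{\|\xi\|_{C^1(\overline\Omega)}}{\big|\int_\Omega\chi\nabla\cdot\xi\,dx\big|}\,\big(|\mu|(\overline\Omega)+\|\nabla w\|_{L^2}\big),
\]
so the stated estimate is equivalent to producing an admissible $\xi$ with $\|\xi\|_{C^1}/|\int_\Omega\chi\nabla\cdot\xi|\leq C(\Omega,d,c_0,m_0)\,(1{+}|\nabla\chi|(\Omega))$.

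This construction is the main obstacle. One first notes that no fixed field (nor any finite family) can work uniformly: a Lyapunov-convexity argument furnishes, for any prescribed tangential fields, a competitor $\chi\in\mathcal M_{m_0}$ on which all the volume-change rates $\int_\Omega\chi\nabla\cdot\xi\,dx$ vanish. I would therefore build $\xi$ depending on $\chi$, starting from a regularization of the inner normal $\tfrac{\nabla\chi}{|\nabla\chi|}$ carried by $|\nabla\chi|\llcorner\Omega$, corrected in a boundary collar so as to be tangential along $\partial\Omega$; using $\int_\Omega\chi\nabla\cdot\xi\,dx=-\int_\Omega\xi\cdot\tfrac{\nabla\chi}{|\nabla\chi|}\,d|\nabla\chi|$, such a field realizes a volume-change rate bounded below by $c(\Omega,m_0)>0$ — non-degeneracy here being guaranteed by the relative isoperimetric inequality, which keeps $|\nabla\chi|(\Omega)$ away from zero on $\mathcal M_{m_0}$ — at the price of a $C^1$-norm controlled by $C(\Omega,d,c_0)(1{+}|\nabla\chi|(\Omega))$, the perimeter entering precisely through the regularization needed to defeat cancellation of an oscillating normal. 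Substituting into the previous display gives the claim. This is in spirit the field-construction carried out by Chen~\cite{Chen1992} and Abels--R\"oger~\cite{AbelsRoeger}; the only genuinely new point to verify is that it is unaffected by the energy living at the varifold level, which it is, since $\mu$ enters the estimate solely through the benign numerator term $\delta\mu(\xi)$.
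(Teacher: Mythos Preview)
Your reduction is exactly the paper's: the algebraic step producing $\lambda$ via the corrected field $\tilde B=B-(\int_\Omega\chi\nabla\cdot B)(\int_\Omega\chi\nabla\cdot\xi)^{-1}\xi$ is identical, as is the observation that the whole estimate boils down to exhibiting a tangential $\xi$ with $\|\xi\|_{C^1}\big/\big|\int_\Omega\chi\nabla\cdot\xi\big|\leq C(\Omega,d,m_0)(1{+}|\nabla\chi|(\Omega))$.

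The only substantive difference is the construction of $\xi$. You sketch a direct mollification of the inner normal with a boundary-collar correction; the paper instead sets $\xi=\nabla\phi_\epsilon$, where $\phi_\epsilon$ solves the Neumann problem $\Delta\phi_\epsilon=\chi_\epsilon-m_\epsilon$ for $\chi_\epsilon=\chi*\rho_\epsilon$. This choice buys two simplifications: tangentiality on $\partial\Omega$ is automatic from the Neumann condition (no collar correction needed), and the denominator is computed explicitly as $\int_\Omega\chi(\chi_\epsilon-m_\epsilon)\,dx\geq (1-m_0)m_0|\Omega|-C(\Omega)\epsilon(1{+}|\nabla\chi|(\Omega))$, so the lower bound becomes a one-line $L^1$-estimate rather than an appeal to the relative isoperimetric inequality. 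Schauder theory then gives $\|\phi_\epsilon\|_{C^2}\leq C(\Omega)/\epsilon$, and choosing $\epsilon\sim(1{+}|\nabla\chi|(\Omega))^{-1}$ closes the estimate. Your route would work too once the mollification and boundary correction are made precise, but the Poisson-problem construction is cleaner and avoids having to control cancellation in $\int\xi\cdot d\nabla\chi$ by hand.
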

\begin{proof}
For $\xi \in C^{\infty}(\overline \Omega)$ such that 
$(\xi \cdot n_{\partial \Omega})|_{\partial\Omega} \equiv 0$ 
and $\int_\Omega  \chi \nabla\cdot\xi \, dx \neq 0,$ we have that 
$\tilde B : = B - \frac{\int_\Omega \chi \nabla \cdot B\, dx}
{\int_\Omega \chi \nabla \cdot \xi \, dx} \xi$ belongs to $\mathcal{S}_\chi$, 
and plugging this into (\ref{eqn:varifoldVarSoboHyp}) and rearranging, one finds
\begin{equation*}
\delta \mu (B) - \int_\Omega \chi \nabla \cdot (wB) \, dx 
= \lambda \int_{\Omega} \chi \nabla \cdot B \, dx,
\end{equation*}
where 
\begin{equation}\label{eqn:lambdaDef}
\lambda  = \frac{\delta \mu (\xi ) - \int_\Omega \chi \nabla \cdot (w\xi) \, dx}
{\int_\Omega \chi \nabla \cdot \xi \, dx}.
\end{equation}
To conclude the lemma, it suffices to make a careful selection of $\xi$ such that
\begin{equation}\label{eqn:lambdaDes}
|\lambda| \leq C(1+|\nabla \chi|(\Omega)) 
\left( |\mu|(\overline \Omega) + \|\nabla w\|_{L^2(\Omega)}\right).
\end{equation}
Let $\rho_\epsilon$ be a standard mollifier for $\epsilon>0$, and let 
$\chi_\epsilon : =\chi *\rho_\epsilon$ with $m_\epsilon : = \dashint_\Omega \chi_\epsilon \, dx.$ 
We solve the Poisson problem 
\begin{equation*}
\left\{\begin{aligned}
\Delta \phi_\epsilon  &= \chi_\epsilon - m_\epsilon &&\text{ in }\Omega, \\
(n_{\partial\Omega} \cdot \nabla)\phi_\epsilon &= 0 && \text{ on }\partial \Omega, \\
\dashint_{\Omega} \phi_\epsilon \, dx  &= 0.
\end{aligned}\right.
\end{equation*}
As $\|\chi_\epsilon - m_\epsilon\|_{C^1}\leq C(\Omega)/\epsilon$, we can 
apply Schauder estimates to find
\begin{equation}\label{eqn:lazySchauder}
 \|\phi_\epsilon\|_{C^2(\Omega)}\leq C(\Omega)/\epsilon.
 \end{equation}
Noting the $L^1$ estimate
\begin{align*}
\|\chi_\epsilon - \chi\|_{L^1(\Omega)}\leq C(\Omega)\epsilon \left(1+ |\nabla \chi|(\Omega)\right)
\end{align*}
and
 $m_\epsilon \leq m_0,$ we have
\begin{equation}\label{eqn:lbForDiv}
\begin{aligned}
\int_\Omega \chi \nabla \cdot \phi_\epsilon \, dx 
& =\int_{\Omega} \chi (\chi_\epsilon - m_\epsilon)\, dx 
= (1-m_\epsilon)m_0|\Omega| + \int_\Omega \chi (\chi_\epsilon-\chi)\, dx \\
& \geq (1-m_0)m_0|\Omega| - C(\Omega)\epsilon \left(1+ |\nabla \chi|(\Omega)\right) \geq C(m_0,\Omega)
\end{aligned}
\end{equation}
where we have now fixed $\epsilon= \frac{(1-m_0)m_0|\Omega|}{4C(\Omega)(1+ |\nabla \chi|(\Omega))} .$
Choosing $\xi = \nabla  \phi_\epsilon$, by (\ref{eqn:lazySchauder}), (\ref{eqn:lbForDiv}), 
the Poincar\'e inequality for $w$, and the first variation formula
\begin{align*}
\delta \mu (\xi) = \int_{\overline \Omega {\times} \mathbb{S}^{d-1}} 
\left(\mathrm{Id} - s \otimes s\right):\nabla \xi(x) \, d \mu(x,s),
\end{align*}
we conclude (\ref{eqn:lambdaDes}) from (\ref{eqn:lambdaDef}).
\end{proof}

We finally state and prove a result which is helpful for the 
derivation of approximate Gibbs--Thomson laws from the optimality condition~\eqref{eq:minMovVariationalInterpol}
of De~Giorgi interpolants and is also needed in the proof
of Lemma~\ref{lem:metricSlope}.

\begin{lemma}
\label{lem:firstVarDeGiorgi}
Let $\chi \in \mathcal{M}_0$ and $B \in \mathcal{S}_\chi$,
and let $(\Psi_s)_{s \in (-\eta,\eta)}$ be an associated
family of diffeomorphisms from Lemma~\ref{lem:existenceDiffeos}.
Then, for any $\phi \in \smash{H^{1}_{(0)}}$ it holds
\begin{align}
\label{eq:dualityBound}
\bigg|\int_{\Omega} \phi\frac{\chi\circ\Psi^{-1}_s - \chi}{s} \,dx
+ \langle B\cdot\nabla\chi, \phi \rangle_{H^{-1}_{(0)},H^1_{(0)}}\bigg|
\leq \|\phi\|_{H^{1}_{(0)}} o_{s \to 0}(1).
\end{align}
In particular, taking the supremum over $\phi\in H^1_{(0)}$ 
with $\|\phi\|_{H^1_{(0)}}\leq 1$ in (\ref{eq:dualityBound}) 
implies $\frac{\chi\circ\Psi^{-1}_s - \chi}{s}\to -B \cdot \nabla \chi$
strongly in $H^{-1}_{(0)}$ as $s \to 0$.
\end{lemma}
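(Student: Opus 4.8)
The plan is to reduce the statement to a single difference-quotient computation via the change of variables $x=\Psi_s(y)$ and then to extract the uniform-in-$\phi$ rate from an averaged continuity estimate. Since $\langle B\cdot\nabla\chi,\phi\rangle = -\int_\Omega\chi\nabla\cdot(\phi B)\,dx$ by~\eqref{eq:actionVel}, and since the change of variables gives
\begin{equation*}
\int_\Omega\phi\,(\chi\circ\Psi_s^{-1})\,dx = \int_\Omega\chi\,(\phi\circ\Psi_s)\,|\det\nabla\Psi_s|\,dy =: g(s),
\end{equation*}
it suffices to prove that $\tfrac{1}{s}(g(s)-g(0)) - \int_\Omega\chi\nabla\cdot(\phi B)\,dx = \|\phi\|_{H^1_{(0)}}\,o_{s\to 0}(1)$ with the rate independent of $\phi$.

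First I would carry out the computation for smooth $\phi$. Writing $f_\tau := \chi\circ\Psi_\tau^{-1}$, $V_\tau := (\partial_\tau\Psi_\tau)\circ\Psi_\tau^{-1}$ and $d_\tau := \big(\partial_\tau|\det\nabla\Psi_\tau|/|\det\nabla\Psi_\tau|\big)\circ\Psi_\tau^{-1}$ (defining the Jacobian rate intrinsically through $\Psi_\tau$, since the velocity is only continuous and its Eulerian divergence need not be classically defined), the map $g$ is $C^1$ and, differentiating under the integral and changing variables back, its derivative is
\begin{equation*}
g'(\tau) = \int_\Omega f_\tau\big(V_\tau\cdot\nabla\phi + \phi\,d_\tau\big)\,dx =: J(\tau).
\end{equation*}
Since $\Psi_0 = \mathrm{Id}$ and $\partial_s\Psi_s|_{s=0} = B$ by Lemma~\ref{lem:existenceDiffeos}, one has $f_0 = \chi$, $V_0 = B$ and $d_0 = \mathrm{tr}(\nabla B) = \nabla\cdot B$, so that $J(0) = \int_\Omega\chi\nabla\cdot(\phi B)\,dx$ is exactly the desired limit. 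The fundamental theorem of calculus then yields $\tfrac{1}{s}(g(s)-g(0)) - J(0) = \tfrac{1}{s}\int_0^s\big(J(\tau)-J(0)\big)\,d\tau$.

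The key step is to bound $J(\tau)-J(0)$ uniformly in $\phi$. Splitting into the two natural contributions and applying Cauchy--Schwarz gives
\begin{equation*}
|J(\tau)-J(0)| \leq \|f_\tau V_\tau - \chi B\|_{L^2}\,\|\nabla\phi\|_{L^2} + \|f_\tau d_\tau - \chi\,\nabla\cdot B\|_{L^2}\,\|\phi\|_{L^2}.
\end{equation*}
I would control the first coefficient using $V_\tau\to B$ uniformly (from the joint regularity of the flow) together with $\|f_\tau-\chi\|_{L^2}\to 0$; the latter is where the $\{0,1\}$-valued structure enters, as $|f_\tau-\chi|^2 = |f_\tau-\chi|$ reduces the $L^2$ convergence to the $L^1$ continuity of $\tau\mapsto\chi\circ\Psi_\tau^{-1}$. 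The second coefficient is handled analogously from $d_\tau\to\nabla\cdot B$ uniformly, and the factor $\|\phi\|_{L^2}$ is converted to $\|\nabla\phi\|_{L^2} = \|\phi\|_{H^1_{(0)}}$ by the Poincar\'e inequality for mean-zero functions. Hence $|J(\tau)-J(0)| \leq \omega(\tau)\,\|\phi\|_{H^1_{(0)}}$ with $\omega(\tau)\to 0$ as $\tau\to 0$ and $\omega$ independent of $\phi$; averaging then gives the bound $\|\phi\|_{H^1_{(0)}}\,o_{s\to 0}(1)$.

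Finally I would remove the smoothness assumption on $\phi$: both $\phi\mapsto g(s)-g(0)$ and $\phi\mapsto\int_0^s J(\tau)\,d\tau$ are continuous linear functionals on $H^1_{(0)}$ (the former bounded by $C\|\phi\|_{L^2}$ after changing variables, the latter by $C|s|\,\|\phi\|_{H^1_{(0)}}$), so the integral identity, and with it~\eqref{eq:dualityBound}, extends from the dense subspace $C^\infty(\overline\Omega)\cap H^1_{(0)}$ to all of $H^1_{(0)}$. The mass constraint $\int_\Omega\chi\circ\Psi_s^{-1} = m_0$ ensures $\tfrac{\chi\circ\Psi_s^{-1}-\chi}{s}$ has vanishing mean and thus defines an element of $H^{-1}_{(0)}$; taking the supremum in~\eqref{eq:dualityBound} over $\|\phi\|_{H^1_{(0)}}\leq 1$ identifies the left-hand side with $\big\|\tfrac{\chi\circ\Psi_s^{-1}-\chi}{s} + B\cdot\nabla\chi\big\|_{H^{-1}_{(0)}}$ and yields the asserted strong convergence. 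I expect the main obstacle to be precisely the uniform-in-$\phi$ continuity estimate of the previous paragraph: the low-order term forces one to trade $\|\phi\|_{L^2}$ against $\|\phi\|_{H^1_{(0)}}$ via Poincar\'e, while the $L^2$ (not merely $L^1$) convergence $f_\tau\to\chi$ is what makes the leading term admissible when tested against $\nabla\phi\in L^2$.
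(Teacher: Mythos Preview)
Your proof is correct and proceeds along the same conceptual line as the paper: both arguments start from the change of variables $g(s)=\int_\Omega\chi\,(\phi\circ\Psi_s)\,|\det\nabla\Psi_s|$, reduce the claim to controlling a difference quotient against $\|\phi\|_{H^1_{(0)}}$, and use the Taylor expansion $\det\nabla\Psi_s-1=s\,\nabla\cdot B+o(s)$ together with an $L^2$-continuity statement. The organizational difference is that you differentiate $g$ once, write the error as an average $\tfrac{1}{s}\int_0^s(J(\tau)-J(0))\,d\tau$, and bound $J(\tau)-J(0)$ by Cauchy--Schwarz; the paper instead splits $g(s)-g(0)$ directly into the two pieces $I$ and $II$ and handles each by a separate change of variables. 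The one genuine distinction is in how the $L^2$ rate is obtained: you use the $\{0,1\}$-valued structure via $|f_\tau-\chi|^2=|f_\tau-\chi|$ to reduce to $L^1$ continuity of $\tau\mapsto\chi\circ\Psi_\tau^{-1}$, whereas the paper undoes the change of variables once more to shift the translation onto the bounded coefficient $\chi\,\nabla\cdot B$ and then invokes continuity of translation in $L^2(\Omega)$. Your trick is slightly more direct and makes transparent why the argument would fail for a general $L^\infty$ function in place of $\chi$; the paper's version avoids the explicit density step you need at the end. Both are equally valid and of comparable length.
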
 

\begin{proof}
To simplify the notation, we denote $\chi_s : = \chi\circ \Psi_s^{-1}$.
Heuristically, one expects~\eqref{eq:dualityBound} by virtue
of the formal relation $\partial_s\chi_s|_{s=0} = - (B\cdot\nabla)\chi$ 
mentioned after~(\ref{def:MSmetricSlope2}). A rigorous argument is given as follows.

Using the product rule, we first expand~\eqref{eq:actionVel} as
\begin{equation}\label{eqn:expand}
\begin{aligned}
& \int_{\Omega} \phi\frac{\chi_s - \chi}{s} \,dx
+ \langle B\cdot\nabla\chi, \phi \rangle_{H^{-1}_{(0)},H^1_{(0)}}
\\& 
=  \int_{\Omega} \phi\frac{\chi_s - \chi}{s} \,dx
- \int_{\Omega} \chi \big((B\cdot\nabla)\phi + \phi\nabla\cdot B\big) \,dx.
\end{aligned}
\end{equation}
Recalling $\chi_s = \chi \circ \Psi^{-1}_s$, using the change of variables formula for the map $x~\mapsto~\Psi_s (x)$, and adding zero entails
\begin{equation}\label{eqn:expandInsert}
\begin{aligned}
\int_{\Omega} \phi\frac{\chi_s - \chi}{s} \,dx
= \int_{\Omega} \chi \frac{\phi\circ\Psi_s - \phi}{s} \,dx
+ \int_{\Omega} \chi (\phi\circ\Psi_s) \frac{|\det\nabla\Psi_s| - 1}{s} \,dx.
\end{aligned}
\end{equation}
Inserting (\ref{eqn:expandInsert}) into (\ref{eqn:expand}), we have that 
\begin{equation}\label{eq:error1}
\int_{\Omega} \phi\frac{\chi_s - \chi}{s} \,dx
+ \langle B\cdot\nabla\chi, \phi \rangle_{H^{-1}_{(0)},H^1_{(0)}} = I + II,
\end{equation}
 where
\begin{align*}
I &:= \int_{\Omega} \chi \frac{\phi\circ\Psi_s - \phi}{s} \,dx
- \int_{\Omega} \chi (B\cdot\nabla)\phi \,dx,
\\ 
II &:= \int_{\Omega} \chi (\phi\circ\Psi_s) \frac{|\det\nabla\Psi_s| - 1}{s}
- \int_{\Omega} \chi \phi\nabla\cdot B \,dx.
\end{align*}

To estimate $II$, we first Taylor expand $\nabla\Psi_s(x) = \mathrm{Id} + s\nabla B(x) + F_s(x)$
where, by virtue of the regularity of~$s\mapsto\Psi_s$ and~$B$, 
the remainder satisfies the upper bound $\sup_{x \in \Omega} |F_s(x)| \leq so_{s\to 0}(1)$.
In particular, from the Leibniz formula we deduce
\begin{equation}\label{eqn:detTaylor}
\det\nabla\Psi_s(x) - 1 = s(\nabla\cdot B)(x) + f_s(x),
\end{equation} where the remainder satisfies
the same qualitative upper bound as~$F_s(x)$. Note that by restricting to sufficiently small $s$,
we may ensure that $\det\nabla\Psi_s=|\det\nabla\Psi_s|$. Hence, using (\ref{eqn:detTaylor}), then adding zero to reintroduce the determinant for a change of variables, and applying the continuity of translation (by a diffeomorphism) in $L^2(\Omega)$, we have
\begin{equation}\label{eqn:IIestimate}
\begin{aligned}
II& = \|\phi\|_{L^2(\Omega)}o_{s \to 0}(1) + 
\int_{\Omega} \chi \nabla\cdot B(\phi\circ\Psi_s - \phi)\,dx \\
& =  \|\phi\|_{L^2(\Omega)}o_{s \to 0}(1) + \int_{\Omega} \phi \Big((\chi \nabla\cdot B)\circ\Psi_s^{-1} - \chi \nabla\cdot B\Big)\,dx \\
&~~~ 
+ \int_{\Omega} \chi\nabla \cdot B\left( \phi\circ \Psi_s\right) (1 {-} |\det\nabla\Psi_\lambda(x)|)\,dx \\
&\leq \|\phi\|_{L^2(\Omega)}o_{s \to 0}(1).
\end{aligned}
\end{equation}

To estimate~$I$, we first make use of the fundamental theorem 
of calculus along the trajectories determined by $\Psi_s$, reintroduce 
the determinant by adding zero as in (\ref{eqn:IIestimate}), and 
apply $\frac{d}{d\lambda} \Psi_\lambda (x) = B(x)+o_{\lambda\to 0}(1)$ to see
\begin{equation*}
\begin{aligned}
\int_{\Omega} \chi \frac{\phi\circ\Psi_s - \phi}{s} \,dx
&= \dashint_{0}^{s} \int_{\Omega} \chi \nabla\phi\big(\Psi_\lambda(x)\big)
\cdot\frac{d}{d\lambda}\Psi_\lambda(x)\,dx d\lambda
\\&
= \dashint_{0}^{s} \int_{\Omega} \chi \nabla\phi\big(\Psi_\lambda(x)\big)
\cdot B(x) |\det\nabla\Psi_\lambda(x)|\,dx d\lambda 
\\&~~~
+  \|\nabla \phi\|_{L^2(\Omega)}o_{s\to 0}(1).
\end{aligned}
\end{equation*}
Hence, by undoing the change of variables in the first term
on the right hand side of the previous identity, 
an argument analogous to the one for~$II$ guarantees
\begin{equation}\label{eqn:Iestimate}
I \leq \|\nabla \phi\|_{L^2(\Omega)}o_{s \to 0}(1).
\end{equation}
Looking to (\ref{eq:error1}), we use the two respective estimates 
for~$I$ and~$II$ given in (\ref{eqn:Iestimate}) and~(\ref{eqn:IIestimate}). By an
application of Poincar\'e's inequality, we arrive at~\eqref{eq:dualityBound}.
\end{proof}

\subsection{Proof of Theorem~\ref{theo:mainResult}}\label{subsec:proofMainResult}
We proceed in several steps.

\textit{Step 1: Approximate solution and approximate energy inequality.} 
For time discretization parameter $h \in (0,1)$ and initial condition $\chi_0 \in BV(\overline\Omega;\{0,1\})$, 
we define the sequence $\{\chi_n^h\}_{n \in \mathbb{N}_0}$ as in (\ref{eq:minMovStep}), and recall 
the piecewise constant function $\chi^h$ in (\ref{eq:minMovPiecewiseInterpol}) and the De~Giorgi 
interpolant $\bar{\chi}^h$ in (\ref{eq:minMovVariationalInterpol}).
We further define the linear interpolant~$\hat{\chi}^h$ by
\begin{equation}\label{eq:minMovLinearInterpol}
\hat{\chi}^h(t)  = \frac{nh-t}{h}\chi_{n-1}^h 
+ \frac{t - (n{-}1)h}{h} \chi_h^n \quad \text{for all } t \in [(n{-}1)h,nh),\,n\in\mathbb{N}.
\end{equation}
To capture fine scale behavior of the energy in the limit, we will introduce 
measures~$\mu^h = \mathcal{L}^1 \llcorner (0,T_*) \otimes (\mu^h_t)_{t\in (0,T_*)} \in 
\mathrm{M}((0,T_*) {\times} \overline\Omega {\times} \mathbb{S}^{d-1})$ 
so that for each $t \in (0,T_*)$ the total mass of the mass measure 
$|\mu^h_t|_{\mathbb{S}^{d-1}} \in \mathrm{M}(\overline\Omega)$ associated
with the oriented varifold $\mu^h_t \in \mathrm{M}(\overline\Omega {\times} \mathbb{S}^{d-1})$
is naturally associated to the energy of the De~Giorgi interpolant at time~$t$. 
More precisely, we define varifolds associated to the varifold lift of~$\bar \chi^h$ 
in the interior and on the boundary by
\begin{equation}
\begin{aligned}
\label{eq:approxVarifolds}
\mu^{h,\Omega} &:= \mathcal{L}^1 \llcorner (0,T_*) \otimes (\mu^{h,\Omega}_t)_{t\in (0,T_*)},
\\
\mu^{h,\Omega}_t &:= c_0|\nabla \bar \chi^h(\cdot,t)|\llcorner{\Omega} 
\otimes (\delta_{\frac{\nabla\bar\chi^h(\cdot,t)}{|\nabla\bar\chi^h(\cdot,t)|}(x)})_{x \in \Omega},
\end{aligned}
\end{equation}
respectively
\begin{equation}
\begin{aligned}
\label{eq:approxVarifoldsBoundary} 
\mu^{h,\partial\Omega} &:= \mathcal{L}^1 \llcorner (0,T_*) 
\otimes (\mu^{h,\partial\Omega}_t)_{t\in (0,T_*)},
\\
\mu^{h,\partial \Omega}_t &:= (\cos\alpha)c_0\bar\chi^h(\cdot,t)\llcorner{\partial \Omega} 
\otimes (\delta_{n_{\partial\Omega}(x)})_{x \in \partial\Omega},
\end{aligned}
\end{equation}
where $n_{\partial \Omega}$ denotes the inner normal on~$\partial \Omega$
and where we again perform an abuse of notation and do not distinguish
between~$\bar \chi^h(\cdot,t)$ and its trace along~$\partial\Omega$.
We finally define the total approximate varifold by
\begin{align}
\label{def:approxTotalVarifold}
\mu^h : = \mu^{h,\Omega} + \mu^{h,\partial \Omega}.
\end{align}

The remainder of the first step is concerned with the proof of the following
approximate version of the energy dissipation inequality~\eqref{eq:DeGiorgiInequality}:
for $\tau$ and $T$ such that $0<\tau<T<T_*$, and $h \in (0,T-\tau)$, 
we claim that
\begin{equation}
\label{eqn:discDissipation2}
\begin{aligned}
& |\mu^h_T|_{\mathbb{S}^{d-1}}(\overline\Omega) 
+ \int_0^{\tau} \frac{1}{2}\|\partial_t \hat \chi^h\|^2_{H^{-1}_{(0)}} 
+ \frac{1}{2}\bigg\|\frac{\bar \chi^{h}(t) - \bar\chi^h(\lfloor t/h \rfloor h) }
{t - \lfloor t/h \rfloor h}\bigg\|^2_{H^1_{(0)}} \, dt 
\leq  E[\chi_0].
\end{aligned}
\end{equation}
As a first step towards~\eqref{eqn:discDissipation2}, we claim
for all $n \in \mathbb{N}$ (cf. \cite{Ambrosio2005} and \cite{Chambolle2021})
\begin{equation}
\label{eqn:stepDissipation}
\begin{aligned}
&E[\bar \chi^h(nh)]
+ \frac{h}{2}\left\|\frac{\chi_n - \chi_{n-1}}{h}\right\|_{H^{-1}_{(0)}}^2 
+ \frac{1}{2}\int_{(n{-}1)h}^{nh} \bigg\| \frac{\bar \chi^{h}(t) 
- \bar\chi^h((n{-}1)h) }{t- h(n-1)}\bigg\|_{H^{-1}_{(0)}}^2 \, dt
\\ &
\leq E[\bar \chi^h((n{-}1)h)]
\end{aligned}
\end{equation}
and
\begin{align}
\label{eq:upperBoundEnergyDeGiorgiInterpolant}
E[\bar\chi^h(t)] \leq E[\bar\chi^h((n{-}1)h)]
\quad\text{for all } n \in \mathbb{N} \text{ and all } t \in ((n{-}1)h,nh).
\end{align}
In particular, using the definition~(\ref{eq:minMovLinearInterpol}) and then telescoping 
over~$n$ in~(\ref{eqn:stepDissipation}) provides
for all $n \in \mathbb{N}$ the discretized dissipation inequality
\begin{equation}
\label{eqn:discDissipation}
\begin{aligned}
&E[\bar \chi^h (nh)] 
+ \frac{1}{2}\int_0^{nh} \frac{1}{2}\|\partial_t \hat \chi^h\|^2_{H^{-1}_{(0)}} 
+ \frac{1}{2}\bigg\|\frac{\bar \chi^{h}(t) - \bar\chi^h(\lfloor t/h \rfloor h) }
{t - \lfloor t/h \rfloor h}\bigg\|^2_{H^1_{(0)}} \, dt 
\leq E[\chi_0].
\end{aligned}
\end{equation}

The bound~\eqref{eq:upperBoundEnergyDeGiorgiInterpolant}
is a direct consequence of the minimality of the 
interpolant~(\ref{eq:minMovVariationalInterpol}) at~$t$.
To prove~\eqref{eqn:stepDissipation}, and thus also~\eqref{eqn:discDissipation}, 
we restrict our attention to the interval $(0,h)$ and temporarily drop the 
superscript $h.$ We define the function 
\begin{equation}
\label{eq:defDeGiorgiValue}
f(t) : = E[\bar \chi(t)] + \frac{1}{2t} \left\|\bar \chi (t) - \chi_0\right\|_{H^{-1}_{(0)}}^2,
\quad t \in (0,h),
\end{equation}
and prove $f$ is locally Lipschitz in $(0,h)$ with
\begin{equation}\label{eqn:fDerivative}
\frac{d}{dt} f(t) = -\frac{1}{2t^2}\|\bar \chi (t) - \chi_0\|_{H^{-1}_{(0)}}^2
\quad\text{for a.e.\ } t \in (0,h).
\end{equation}

To deduce (\ref{eqn:fDerivative}), we first show 
\begin{align}
\label{eq:monotonicityDistance}
&(0,h] \ni t \mapsto \|\bar \chi (t) - \chi_0\|_{H^{-1}_{(0)}}
&&\text{is non-decreasing},
\\& \label{eq:monotonicityDeGiorgiValue}
(0,h] \ni t \mapsto f(t) &&\text{is non-increasing}.
\end{align}
Indeed, for $0 < s < t \leq h$ we obtain from minimality 
of the interpolant~(\ref{eq:minMovVariationalInterpol}) at~$s$,
then adding zero, and then from minimality of the 
interpolant~(\ref{eq:minMovVariationalInterpol}) at~$t$ that
\begin{align*}
f(s) &\leq E[\bar\chi(t)] + \frac{1}{2s}\|\bar \chi (t) - \chi_0\|_{H^{-1}_{(0)}}^2
\\&
\leq E[\bar\chi(s)] + \frac{1}{2t}\|\bar \chi (s) - \chi_0\|_{H^{-1}_{(0)}}^2
+ \Big(\frac{1}{2s} - \frac{1}{2t}\Big)\|\bar \chi (t) - \chi_0\|_{H^{-1}_{(0)}}^2.
\end{align*}
Recalling definition~\eqref{eq:defDeGiorgiValue},
this is turn immediately implies~\eqref{eq:monotonicityDistance}.
For a proof of~\eqref{eq:monotonicityDeGiorgiValue},
we observe for $s,t \in (0,h]$ by minimality of the 
interpolant~(\ref{eq:minMovVariationalInterpol}) at $t$ that
\begin{equation*}
f(t)-f(s)\leq \frac{1}{2t}\|\bar \chi (s) - \chi_0\|_{H^{-1}_{(0)}}^2 
- \frac{1}{2s}\|\bar \chi (s) - \chi_0\|_{H^{-1}_{(0)}}^2.
\end{equation*} 
Rearranging one finds for $0< s < t \leq h$
\begin{equation}
\label{eq:LipschitzBound1}
\frac{f(t)-f(s)}{t-s}\leq -\frac{1}{2ts}\|\bar \chi (s) - \chi_0\|_{H^{-1}_{(0)}}^2
\leq 0,
\end{equation}
proving~\eqref{eq:monotonicityDeGiorgiValue}.

Likewise using minimality of the interpolant~(\ref{eq:minMovVariationalInterpol}) at $s$, 
one also concludes for $0 < s < t < h$ the lower bound
\begin{equation}
\label{eq:LipschitzBound2}
\frac{f(t)-f(s)}{t-s} \geq -\frac{1}{2ts}\|\bar \chi (t) - \chi_0\|_{H^{-1}_{(0)}}^2.
\end{equation}
As the discontinuity set of a monotone function is at most countable,
we infer~(\ref{eqn:fDerivative}) from~\eqref{eq:LipschitzBound1},
\eqref{eq:LipschitzBound2} and~\eqref{eq:monotonicityDistance}.
Integrating (\ref{eqn:fDerivative}) on $(s,t)$, using 
optimality of the interpolant~(\ref{eq:minMovVariationalInterpol}) at~$s$
in the form of $f(s) \leq E[\bar\chi(0)] = E[\chi_0]$,
and using monotonicity of~$f$ from~\eqref{eq:monotonicityDeGiorgiValue}, we have
\begin{equation*}
E[\bar \chi(h)] + \frac{1}{2h} \left\|\bar \chi (h) - \chi_0\right\|_{H^{-1}_{(0)}}^2 
+ \int_s^t \frac{1}{2\tau^2}\|\bar \chi (\tau) - \chi_0\|_{H^{-1}_{(0)}}^2 d\tau \leq E[\chi_0].
\end{equation*}
Sending $s \downarrow 0$ and $t \uparrow h$, we recover (\ref{eqn:stepDissipation}) 
and thus also~\eqref{eqn:discDissipation}.

It remains to post-process~\eqref{eqn:discDissipation} to~\eqref{eqn:discDissipation2}.
Note first that by definitions~\eqref{eq:approxVarifolds}--\eqref{def:approxTotalVarifold}
it holds $|\mu^h_t|_{\mathbb{S}^{d-1}}(\overline\Omega) = E[\bar\chi^h(t)]$
for all $t \in (0,T_*)$. 
We claim that for all $h \in (0,1)$
\begin{align}
\label{eq:monotonicityEnergyDeGiorgiInterpolant}
(0,T_*) \ni t \mapsto |\mu^h_t|_{\mathbb{S}^{d-1}}(\overline{\Omega})
\text{ is non-increasing}.
\end{align}
Indeed, for $(n{-}1)h < s < t \leq nh$ we simply get from the
minimality of the De~Giorgi interpolant~(\ref{eq:minMovVariationalInterpol}) 
at time~$t$ 
\begin{align*}
&E[\bar\chi^h(t)] + \frac{1}{2(t - (n{-}1)h)}\|\bar\chi^h(t)
- \chi^h((n{-}1)h)\|^2_{H^{-1}_{(0)}}
\\
&\leq E[\bar\chi^h(s)] + \frac{1}{2(t - (n{-}1)h)}\|\bar\chi^h(s)
- \chi^h((n{-}1)h)\|^2_{H^{-1}_{(0)}}
\end{align*}
so that~\eqref{eq:monotonicityEnergyDeGiorgiInterpolant}
follows from~\eqref{eq:monotonicityDistance} and~\eqref{eq:upperBoundEnergyDeGiorgiInterpolant}.
Restricting our attention to $h\in (0,T-\tau)$, there is $n_0\in \mathbb{N}$ such that $\tau< n_0 h < T ,$ and by (\ref{eq:monotonicityEnergyDeGiorgiInterpolant}) and positivity of the integrand, we may bound the left-hand side of (\ref{eqn:discDissipation2}) by (\ref{eqn:discDissipation}) with $n= n_0$ completing the proof of (\ref{eqn:discDissipation2}).

\textit{Step 2: Approximate Gibbs--Thomson law.}
Naturally associated to the De Giorgi interpolant~(\ref{eq:minMovVariationalInterpol}) 
is the potential $w^h \in L^2(0,T_*; H^1_{(0)} )$ satisfying
\begin{equation}
\label{def:w_h}
\begin{aligned}
\Delta w^h(\cdot,t) &=  \frac{\bar{\chi}^h(t) -\bar \chi^h (\lfloor t/h \rfloor) }{t - \lfloor t/h \rfloor} 
&& \text{ in }\Omega, 
\\
(n_{\partial\Omega} \cdot \nabla) w^h(\cdot,t) &= 0  
&& \text{ on }\partial\Omega,
\end{aligned}
\end{equation}
for $t \in (0,T_*).$ Note that this equivalently expresses~\eqref{eqn:discDissipation2}
in the form of
\begin{equation}
\label{eqn:discDissipation3}
\begin{aligned}
& |\mu^h_T|_{\mathbb{S}^{d-1}}(\overline\Omega) 
+ \int_0^{\tau} \frac{1}{2}\|\partial_t \hat \chi^h\|^2_{H^{-1}_{(0)}} 
+ \frac{1}{2}\|\nabla w^h\|^2_{L^2(\Omega)} \, dt 
\leq  E[\chi_0].
\end{aligned}
\end{equation}

By the minimizing property~(\ref{eq:minMovVariationalInterpol})
of the De~Giorgi interpolant, Allard's first variation formula~\cite{Allard1972},
and Lemma~\ref{lem:firstVarDeGiorgi}, it follows that the De~Giorgi interpolant 
furthermore satisfies the approximate Gibbs--Thomson relation 
\begin{align}
\label{eq:approxGibbsThomson}
&  \int_{\overline\Omega {\times} \mathbb{S}^{d-1}}  
\left(\mathrm{Id} - s \otimes s\right) :\nabla B(x) \, d\mu_t^h(x,s)
= \int_\Omega \bar\chi^h(t) \nabla \cdot (w^h(\cdot,t)B) \, dx
\end{align} 
for all $t \in (0,T_*)$ and all $B \in \mathcal{S}_{\bar\chi^h(t)}$.

Applying the result of Lemma~\ref{lem:AbelsRoeger}
to control the Lagrange multiplier arising from the mass constraint, 
and using the uniform bound on the energy from the dissipation 
relation~(\ref{eqn:discDissipation}) and the estimate~\eqref{eq:upperBoundEnergyDeGiorgiInterpolant}, 
we find there exist functions $\lambda^h  \in  L^2(0,T)$ and a constant 
$C = C(\Omega,d,c_0,m_0,\chi_0)>0$ such that for all $t$ in $(0,T_*)$
\begin{equation}\label{eqn:discGT}
\int_{\overline\Omega {\times} \mathbb{S}^{d-1}}  
\left(\mathrm{Id} - s \otimes s\right) :\nabla B(x) \, d\mu_t^h(x,s)
=  \int_\Omega \bar\chi^h(t) \nabla \cdot \big((w^h(\cdot,t) + \lambda^h(t)) B \big) \, dx
\end{equation}
for all $B \in C^1(\overline\Omega;\Rd)$ with $(B\cdot n_{\partial\Omega})|_{\partial\Omega}\equiv 0$,
and
\begin{equation}\label{bdd:w+lambda}
\|w^h(\cdot, t) + \lambda^h(t) \|_{H^1(\Omega)} 
\leq C (1 + \|\nabla w^h(\cdot, t)\|_{L^2(\Omega)}).
\end{equation}

\textit{Step 3: Compactness, part I: Limit varifold.}
Based on the uniform bound on the energy from the dissipation 
relation~(\ref{eqn:discDissipation}) and the estimate~\eqref{eq:upperBoundEnergyDeGiorgiInterpolant}, 
we have by weak-$*$ compactness of finite Radon measures, up to
selecting a subsequence $h \downarrow 0$,
\begin{align}
\label{eqn:varifoldLimit}
\mu^{h,\Omega} &\wksto \mu^\Omega
&&\text{in } \mathrm{M}((0,T_*){\times}\overline{\Omega}{\times}\mathbb{S}^{d-1}),
\\ \label{eqn:varifoldBoundaryLimit}
\mu^{h,\partial\Omega} &\wksto \mu^{\partial\Omega}
&&\text{in } \mathrm{M}((0,T_*){\times}\partial\Omega{\times}\mathbb{S}^{d-1}).  
\end{align}
Define $\mu := \mu^{\Omega} + \mu^{\partial\Omega}$.

To ensure the required structure of the limit measures~$\mu^{\Omega}$
and~$\mu^{\partial\Omega}$, one may argue as follows. Thanks to the monotonicity~\eqref{eq:monotonicityEnergyDeGiorgiInterpolant},
one may apply \cite[Lemma~2]{Hensel2021l} to obtain that the limit measure~$\mu$ 
can be sliced in time as
\begin{equation}
\label{eqn:musliced}
\mu =  \mathcal{L}^1\llcorner{(0,T_*)} \otimes (\mu_t)_{t \in (0,T_*)},
\,\mu_t \in {\rm M}(\overline{\Omega} {\times} \mathbb{S}^{d-1})
\text{ for all } t \in (0,T_*),
\end{equation} 
and that the standard lower semi-continuity for measures can be applied 
to almost every slice in time --- precisely given by
\begin{equation}
\label{eqn:varifoldSlicingLSC}
|\mu_t|_{\mathbb{S}^{d-1}}(\overline\Omega)
\leq \liminf_{h\to 0}|\mu^h_t|_{\mathbb{S}^{d-1}}(\overline\Omega)
< \infty 
\quad \text{for a.e.\ } t \in (0,T_*).
\end{equation} 

Next, we show that $\mu^{\partial \Omega}$ satisfies
\begin{align}
\label{eqn:muBoundarySliced}
\mu^{\partial\Omega} = \mathcal{L}^1\llcorner{(0,T_*)} \otimes (\mu^{\partial\Omega}_t)_{t \in (0,T_*)},
\,\mu^{\partial\Omega}_t \in {\rm M}(\partial\Omega {\times} \mathbb{S}^{d-1})
\text{ for all } t \in (0,T_*),
\end{align}
together with~(\ref{eq:compSlice2}). To see that~(\ref{eqn:muBoundarySliced}) 
holds, recall that $|\mu^{h,\partial \Omega}_t|_{\mathbb{S}^{d-1}}$ is simply given by
the measure $g_h(\cdot,t) \mathcal{H}^{d-1}\llcorner\Omega$ 
for the trace $g_h(\cdot,t) : = c_0 \cos(\alpha)\bar \chi^h(\cdot,t)$. 
Due to $\|g_h\|_{L^\infty(\partial\Omega \times (0,T))}\leq c_0$, up to a subsequence, 
$g_h$ weakly converges to some $g$ in 
$L^2((0,T_*) {\times} \partial\Omega;\mathcal{L}^1\otimes \mathcal{H}^{d-1})$. 
From this, \eqref{eqn:varifoldBoundaryLimit} and~\eqref{eqn:varifoldSlicingLSC}, it 
follows that~$\mu^{\partial\Omega}$ has the structure~\eqref{eqn:muBoundarySliced}
and~(\ref{eq:compSlice2}) with
$|\mu_t^{\partial\Omega}|_{\mathbb{S}^{d-1}} = g(\cdot,t)\mathcal{H}^{d-1}\llcorner\partial\Omega$. 

Finally, we have to argue that
\begin{align}
\label{eqn:muBulkSliced}
\mu^{\Omega} = \mathcal{L}^1\llcorner{(0,T_*)} \otimes (\mu^{\Omega}_t)_{t \in (0,T_*)},
\,\mu^{\Omega}_t \in {\rm M}(\Omega {\times} \mathbb{S}^{d-1})
\text{ for all } t \in (0,T_*),
\end{align}
together with~(\ref{eq:compSlice1}).
However, \eqref{eqn:muBulkSliced} and~(\ref{eq:compSlice1}) 
directly follow from~\eqref{eqn:musliced}, \eqref{eqn:muBoundarySliced},
\eqref{eqn:varifoldSlicingLSC}, and the definition $\mu = \mu^\Omega + \mu^{\partial\Omega}$.

\textit{Step 4: Compactness, part II: Limit potential.}
Compactness for $w^h$ and $\lambda^h$ follows immediately 
from bounds~(\ref{eqn:discDissipation3}) and~(\ref{bdd:w+lambda})
as well as the Poincar\'e inequality, 
showing there is $w\in L^2(0,T;H^1_{(0)})$ and $\lambda \in L^2(0,T)$ 
such that, up to a subsequence, 
$w^h \wkto w$ in $L^2(0,T;H^1_{(0)})$ and $\lambda^h \wkto \lambda$ in $L^2(0,T)$.
In particular, $w^h + \lambda^h \wkto w + \lambda$ in $L^2(0,T;H^1(\Omega))$.

\textit{Step 5: Compactness, part III: Limit phase indicator.}
To obtain compactness of $\hat \chi^h$ and $\bar \chi^h$, we will use 
the classical Aubin--Lions--Simon compactness theorem (see \cite{aubin1963}, \cite{lions1969}, and \cite{simonCompact}). 
By~(\ref{eqn:discDissipation2}), the fundamental theorem of calculus, and 
Jensen's inequality, it follows that
\begin{equation}\label{eqn:ALShyp}
\int_0^{T_*-\delta} \| \chi^h(t {+} \delta) - \chi^h(t)\|_{H^{-1}_{(0)}}^2\, dt
\to 0 \quad \text{ uniformly in } h \text{ as }\delta\to 0
\end{equation}
(see, e.g., \cite[Lemma 4.2.7]{stinson-phd}). Looking to the dissipation (\ref{eqn:discDissipation2}) 
and recalling the definition of~$w^h$ in~(\ref{def:w_h}), one sees that
\begin{equation}\label{eqn:barchichi}
\int_0^{T_*}\| \bar \chi^h(t) - \chi^h(t)\|_{H^{-1}_{(0)}}^2\, dt\leq C_1 h^2.
\end{equation}
We \textit{claim} (\ref{eqn:ALShyp}) is also satisfied for $\bar \chi^h$ 
for a given sequence $h \to 0$. Fix $\epsilon > 0$, and choose $h_1>0$ such 
that $C_1h_1^2<\epsilon$. Choose $\delta_1>0$ such that the left hand side 
of~(\ref{eqn:ALShyp}) is bounded by $\epsilon$ uniformly in $h$ for $0<\delta <\delta_1$. 
By continuity of translation, which follows from density of smooth functions, we can suppose 
\begin{equation*}
\int_0^{T_*-\delta} \| \bar \chi^h(t+\delta) - \bar \chi^h(t)\|_{H^{-1}_{(0)}}^2\, dt 
< \epsilon \quad \text{ for }h>h_1 \text{ and }0<\delta<\delta_1.
\end{equation*}
Then by the triangle inequality one can directly estimate that for all $h$ 
(in the sequence) and $0<\delta <\delta_1$, we have
\begin{equation*}
\int_0^{T_*-\delta} \| \bar \chi^h(t+\delta) -\bar  \chi^h(t)\|_{H^{-1}_{(0)}}^2\, dt <3\epsilon,
\end{equation*}
proving the claim.
  With (\ref{eqn:ALShyp}), we may apply the Aubin--Lions--Simon compactness 
	theorem to $\bar \chi^h$ and $\hat \chi^h$ in the embedded spaces 
	$BV(\Omega) \hookrightarrow \hookrightarrow L^{p}(\Omega) \hookrightarrow H^{-1}_{(0)}$ 
	for some $6/5<p<1^*$ to obtain $\chi \in L^2(0,T_*;BV(\Omega;\{0,1\}))\cap H^1(0,T_*;H^{-1}_{(0)})$ 
	such that
\begin{equation}\label{eqn:chiStrongConvergence}
\chi^h, \bar \chi^h,\hat \chi^h \to \chi \quad \text{ in }L^2(0,T_*;L^2(\Omega))
\end{equation} 
(where we have used the Lebesgue dominated convergence theorem to 
move up to $L^2$ convergence). To see that the target of each approximation 
is in fact correctly written as a single function $\chi$, both $\chi^h$ and 
$\bar \chi^h$ must converge to the same limit by~(\ref{eqn:barchichi}).
Further, by the fundamental theorem of calculus, we have
\begin{equation}
\begin{aligned}
\|\chi^h(t) - \hat \chi^h(t)\|_{H^{-1}_{(0)}}  &=
\|\chi^h(ih) - \hat \chi^h(t)\|_{H^{-1}_{(0)}} 
\\&
\leq \int_{ih}^t \|\partial_t \hat \chi^h\|\, dt 
\leq h^{1/2}\|\partial_t \hat \chi^h\|_{L^2(0,T;H^{-1}_{(0)})},
\end{aligned}
\end{equation}
for some $i\in \mathbb{N}_0,$ which shows that $\chi^h$ and $\hat \chi^h$ 
also converge to the same limit, thereby justifying (\ref{eqn:chiStrongConvergence}). 
Finally, note the dimension dependent embedding was introduced for technical convenience 
to ensure that $L^p(\Omega)\hookrightarrow \smash{H^{-1}_{(0)}}$ is well defined, but can be 
circumnavigated (see, e.g., \cite{kroemerLaux2021}). 

We finally note that the distributional formulation of the initial 
condition survives passing to the limit for $\hat \chi^h$ as 
\begin{equation}\label{eqn:distributionalInitialCondition}
\int_0^{T_*} \left(\langle\partial_t \chi , \zeta \rangle_{H^{-1}_{(0)},H^1_{(0)}}  
+ \int_\Omega \chi \partial_t \zeta \, dx\right) dt
= - \int_\Omega \chi_0 \zeta(x,0) \, dx
\end{equation}
for all $\zeta \in C^1_c(\overline\Omega\times [0,T_*))\cap H^1(0,T_*;H^1_{(0)}).$ 
As the trace of a function in $H^1(0,T;H^{-1}_{(0)})$ exists in $H^{-1}_{(0)}$ (see \cite{LM-v1}), (\ref{eqn:distributionalInitialCondition}) implies 
\begin{equation}\label{eqn:traceInitialCondition}
{\rm Tr}|_{t=0}\chi = \chi_0 \quad \text{ in } H^{-1}_{(0)}.
\end{equation}

\textit{Step 6: Compatibility conditions for limit varifold.}
Returning to the definition of $\mu^{h,\Omega}$,
for any $\phi \in C^1_c(\overline\Omega {\times} (0,T_*);\Rd)$
with $(\phi\cdot n_{\partial\Omega})|_{\partial\Omega{\times}(0,T_*)} \equiv 0$,
\begin{equation*}
\begin{aligned}
\int_0^{T_*} \int_{\overline\Omega {\times}\mathbb{S}^{d-1}} \phi(x)\cdot s 
\, d\mu^{h,\Omega}_t(x,s) dt 
&= c_0\int_0^{T_*}\int_{ \Omega} \phi\cdot d\nabla \bar \chi^h \, dt 
\\&
= - c_0\int_0^{T_*}\int_{ \Omega} \bar\chi^h\nabla\cdot\phi \,dx dt.
\end{aligned}
\end{equation*}
Using the convergence from (\ref{eqn:chiStrongConvergence})
as well as the conclusions from Step~3 of this proof, we can pass to the 
limit on the left- and right-hand side of the above equation, undo the divergence 
theorem, and localize in time to find (using also a straightforward approximation argument) 
that for a.e.\ $t \in (0,T)$, it holds
\begin{equation}\label{eqn:compatibilityIntegralForm}
\int_{\overline\Omega {\times}\mathbb{S}^{d-1}} \phi(x)\cdot s \, d\mu^\Omega_t(x,s)   
= \int_{ \Omega} \phi\cdot c_0d\nabla  \chi 
\end{equation}
for all $\phi \in C^1_c(\overline \Omega;\Rd)$ 
with $(\phi\cdot n_{\partial\Omega})|_{\partial\Omega} \equiv 0$
(the null set indeed does not 
depend on the choice of the test vector field~$\phi$ as $C^1_c(\overline \Omega;\Rd)$
normed by $f \mapsto \|f\|_\infty + \|\nabla f\|_\infty$ is separable). Taking the supremum 
in the above equation over $\phi \in C_c(U)$ 
for any $U \subset \Omega$ one has $c_0|\nabla \chi|(U)\leq |\mu_t^\Omega|(U)$, which 
by outer regularity implies (\ref{eq:comp1}). 

Let now $\phi \in C_c(\partial\Omega {\times} (0,T_*))$
and fix a $C^1(\Omega)$ extension~$\xi$ of the vector
field $(\cos\alpha) n_{\partial\Omega} \in C^1(\partial\Omega)$ (e.g., by 
multiplying the gradient of the signed distance function
for~$\Omega$ by a suitable cutoff localizing to a small enough
tubular neighborhood of~$\partial\Omega$). Recalling 
the definition of $\mu^{h,\partial\Omega}$, we obtain
\begin{equation*}
\begin{aligned}
&\int_0^{T_*} \int_{\overline\Omega {\times}\mathbb{S}^{d-1}} 
\phi(x)\xi(x)\cdot s \, d\mu^{h,\Omega}_t(x,s) dt 
+ \int_0^{T_*} \int_{\partial\Omega} \phi \, d|\mu^{h,\Omega}_t|_{\mathbb{S}^{d-1}} dt 
\\
&= c_0\int_0^{T_*}\int_{ \Omega} \phi\xi\cdot d\nabla \bar \chi^h \, dt 
+ c_0\int_{0}^{T_*} \int_{\partial\Omega} \bar\chi^h\phi\xi\cdot n_{\partial\Omega}
\,d\mathcal{H}^{d-1}dt
\\&
= - c_0\int_0^{T_*}\int_{ \Omega} \bar\chi^h\nabla\cdot(\phi\xi) \,dx dt,
\end{aligned}
\end{equation*}
which as before implies for a.e.\ $t \in (0,T_*)$
\begin{align*}
&\int_{\overline\Omega {\times}\mathbb{S}^{d-1}} 
\phi(x)\xi(x)\cdot s \, d\mu^{\Omega}_t(x,s) 
+ \int_{\partial\Omega} \phi \, d|\mu^{\Omega}_t|_{\mathbb{S}^{d-1}} 
\\&
= \int_{ \Omega} \phi\xi\cdot c_0d\nabla \bar \chi^h  
+ \int_{\partial\Omega} \phi (\cos\alpha)c_0\bar\chi^h\,d\mathcal{H}^{d-1}.
\end{align*}
Sending $\xi \to (\cos\alpha) n_{\partial\Omega} \chi_{\partial\Omega}$
and varying $\phi \in C^1_c(\partial\Omega)$ now implies (\ref{eq:comp2}).
Note that together with Step~3 of this proof, we thus
established item~\textit{i)} of Definition~\ref{def:VarifoldSolution}
with respect to the data~$(\chi,\mu)$.

\textit{Step 7: Gibbs--Thomson law in the limit and generalized mean curvature.} 
We can multiply the Gibbs--Thomson relation~(\ref{eqn:discGT}) 
by a smooth and compactly supported test function on $(0,T_*)$,
integrate in time, pass to the limit as $h \downarrow 0$ using the
compactness from Steps~3 to~5 of this proof, and then localize
in time to conclude that for a.e.\ $t \in (0,T_*)$, it holds
\begin{align}
&\int_{\overline\Omega{\times}\mathbb{S}^{d-1}} 
(\mathrm{Id} - s\otimes s):\nabla B(x) \, d\mu_t(x,s) 
= \int_\Omega \chi(\cdot,t) \nabla\cdot\big((w(\cdot,t) {+}\lambda(t))B\big) \, dx, 
\label{eqn:GTasSobolev}
\end{align}
for all $B \in C^1(\overline\Omega;\Rd)$ with $(B\cdot n_{\partial\Omega})|_{\partial\Omega}\equiv 0$
(the null set again does not depend on the choice of~$B$ due to the 
separability of the space $C^1(\overline\Omega;\Rd)$ normed by 
$f \mapsto \|f\|_\infty + \|\nabla f\|_\infty$).
Note that the left-hand side of (\ref{eqn:GTasSobolev}) 
is precisely $\delta \mu_t (B)$ by Allard's first variation formula~\cite{Allard1972}.
Finally, by Proposition~\ref{prop:firstVariationTangential}, 
the Gibbs--Thomson relation~(\ref{eqn:GTasSobolev}) can be expressed 
as in~(\ref{eq:repTangentialFirstVariation}) with the trace of $w+\lambda$ replacing 
$c_0 H_{\chi}$. Directly following the work of Section 4 in R\"oger~\cite{Roeger2005}, 
which applies for compactly supported variations in $\Omega$,  we conclude that the 
trace of $\frac{w+\lambda}{c_0}$ is given by the generalized mean curvature $H_{\chi}$, 
intrinsic to the surface $\supp |\nabla\chi| \llcorner \Omega,$ for almost every $t$ in $(0,T^*)$. 
Recalling the integrability guaranteed by Proposition \ref{prop:firstVariationTangential}, 
(\ref{eq:repTangentialFirstVariation}) and the curvature 
integrability~\eqref{eq:integrabilityWeakMeanCurvature} are satisfied.
In particular, we proved item~\textit{ii)} of Definition~\ref{def:VarifoldSolution}
with respect to the data~$(\chi,\mu)$.

\textit{Step 8: Preliminary optimal energy dissipation relation.}
By the compactness from Steps~3 to~5 of this proof
for the terms arising in (\ref{eqn:discDissipation3}), 
lower semi-continuity of norms, Fatou's inequality, inequality (\ref{eqn:varifoldSlicingLSC}), 
and first taking $h \downarrow 0$ and afterward $\tau \uparrow T$, we obtain
for a.e.\ $T \in (0,T_*)$
\begin{equation}\label{eqn:Dissipation}
|\mu_T|_{\mathbb{S}^{d-1}}(\overline\Omega) 
+ \int_0^{T} \frac{1}{2}\|\partial_t \chi\|^2_{H^{-1}_{(0)}} 
+ \frac{1}{2}\|\nabla w\|^2_{H^1_{(0)}} \, dt \leq E[\chi_0].
\end{equation}
Due to the previous two steps, it remains to upgrade~\eqref{eqn:Dissipation}
to~\eqref{eq:DeGiorgiInequality} to prove that~$(\chi,\mu)$
is a a varifold solution for Mullins--Sekerka 
flow~\eqref{eq:MullinsSekerka1}--\eqref{eq:MullinsSekerka6}
in the sense of Definition~\ref{def:VarifoldSolution}.

\textit{Step 9: Metric slope.} 
Let $t\in (0,T_*)$ be such that (\ref{eqn:GTasSobolev}) holds. 
For $B \in \mathcal{S}_{\chi(\cdot, t)}$, let $w_B \in H^{-1}_{(0)}$
solve the Neumann problem 
\begin{equation}
\begin{aligned}
\label{def:w_B}
\Delta w_B &=  B \cdot \nabla \chi(\cdot, t)
&&\text{in } \Omega,  
\\
(n_{\partial\Omega}\cdot\nabla) w_B &= 0
&&\text{on } \partial\Omega.
\end{aligned}
\end{equation}
Note by definition of the $H^{-1}_{(0)}$ norm, $\|\nabla w_B\|_{L^2(\Omega)} 
= \|B \cdot \nabla \chi(\cdot, t)\|_{H^{-1}_{(0)}}$.
From the Gibbs--Thomson relation~(\ref{eqn:GTasSobolev}), we have
\begin{equation}
\delta \mu_t (B) = \int_\Omega \chi(\cdot,t) \nabla\cdot(w(\cdot,t)B) \, dx 
= \int_{\Omega} \nabla w(\cdot,t) \cdot \nabla w_B \, dx. 
\end{equation}
Computing the norm of the projection of $w_B$ 
onto $\mathcal{G}_{\chi(\cdot,t)}$ (see (\ref{def:GA})) and recalling 
the inequality $\frac{1}{2}(a/b)^2 \geq a - \frac{1}{2}b^2$, we have
\begin{equation}\label{eqn:potentialVmetricSlope}
\begin{aligned}
\frac{1}{2}\|\nabla w(\cdot,t)\|_{L^2(\Omega)}^2 
&\geq \frac{1}{2}\left(\sup_{B \in \mathcal{S}_{\chi(\cdot,t)}} 
\frac{\int_\Omega \nabla w(\cdot,t) \cdot \nabla w_B}{ \|\nabla w_B\|_{L^2(\Omega)}}\right)^2 
\\&
\geq \sup_{B\in \mathcal{S}_\chi}\left\{\delta \mu_t (B) 
- \frac{1}{2}\|B \cdot \nabla \chi(\cdot, t)\|_{\mathcal{V}_{\chi(\cdot,t)}}^2\right\}
\end{aligned}
\end{equation}
for a.e.\ $t \in (0,T_*)$.

\textit{Step 10: Time derivative of phase indicator.} 
In this step, we show $\partial_t\chi(\cdot,t) \in \mathcal{T}_{\chi(\cdot,t)}$
(see~\eqref{def:tangentVelocities}) for a.e.\ $t \in (0,T_*)$. As for the 
metric slope term, cf.\ \eqref{eqn:potentialVmetricSlope}, we use potentials as a
convenient tool.

From the dissipation (\ref{eqn:Dissipation}), $\chi \in H^1(0,T_*;H^{-1}_{(0)})$ 
and there is $u \in H^1(0,T_*;H^{1}_{(0)})$ such that for almost every $t \in (0,T_*)$ 
the equation $\partial_t \chi(t) = \Delta_N u(t)$ holds. For any 
$\zeta \in C_c^\infty (\overline\Omega \times [0,T_*))\cap L^2(0,T;H^1_{(0)})$ 
via a short mollification argument, one can compute the derivative in 
time of $\langle \chi,\zeta \rangle_{H^{-1}_{(0)},H^1_{(0)}} = (\chi, \zeta)_{L^2(\Omega)}$ 
to find (recall~\eqref{eqn:traceInitialCondition})
\begin{equation}\label{eqn:distrEvolEquation}
\begin{aligned}
					&\int_{\Omega} \chi(\cdot,T)\zeta(\cdot,T) \,dx 
					- \int_{\Omega} \chi_{0}\zeta(\cdot,0) \,dx
					\\&
					= \int_{0}^{T} \int_{\Omega} \chi \p_t\zeta \,dxdt
					- \int_{0}^{T} \int_{\Omega} \nabla u \cdot \nabla\zeta \,dxdt
					\end{aligned}
\end{equation}
for almost every $T< T^*.$
To see that (\ref{eqn:distrEvolEquation}) holds for 
general $\zeta \in C_c^\infty (\overline\Omega \times [0,T_*))$ it suffices 
to check the equation for $c(t) = \intbar_\Omega \zeta(\cdot,t)\, dx$. In 
this case, the left hand side of (\ref{eqn:distrEvolEquation}) becomes $m_0(c(T) - c(0))$ 
and the right hand side becomes $m_0 \int_0^T \partial_t c(t) = m_0(c(T) - c(0)),$ 
verifying the assertion. Finally truncating 
a given test function $\zeta$ on the interval $(T,T_*)$, we have that for almost every $T< T^*$, 
equation (\ref{eqn:distrEvolEquation}) holds for all $\zeta \in C^\infty (\overline\Omega \times [0,T))$.

Note that for almost every $t \in (0,T_*)$ 
\begin{align}
\label{eq:LebesguePointPotential}
\lim_{\tau\downarrow 0} \dashint_{t-\tau}^{t+\tau}
\int_{\Omega} |\nabla u(x,t')-\nabla u(x,t)|^2 \,dx dt' = 0.
\end{align}
Furthermore, for a.e.\ $t\in(0,T_*)$ there is a set $A(t)\subset\Omega$
associated to $\chi$ in the sense that $\chi(\cdot,t) = \chi_{A(t)}$. 
As a consequence of~\eqref{eq:comp1}, \eqref{eq:integrabilityWeakMeanCurvature}, 
and~\eqref{eq:repTangentialFirstVariation} (see equation (2.13) of \cite{Schaetzle2001}), 
for almost every $t \in (0,T_*)$, there exists a measurable subset 
$\widetilde A(t) \subset \Omega$ representing a modification of~$A(t)$ in the sense
\begin{equation}\label{eq:ModificationPhase}
\begin{aligned}
\mathcal{L}^d\big(A(t) \Delta \widetilde A(t)\big) &= 0,
\quad \widetilde A(t) \text{ is open}, \\
\partial \widetilde A(t) \cap \Omega = \overline{\partial^*\widetilde A(t)} \cap \Omega
&\subset \supp |\mu_t^\Omega|_{\mathbb{S}^{d-1}} \llcorner \Omega,
\\
\supp |\nabla \chi(\cdot,t)| &= \partial \widetilde A(t).
\end{aligned}
\end{equation} 

We now claim that for almost every $t \in (0,T_*)$ it holds
\begin{align}
\label{eq:harmonicPotential}
\Delta u(\cdot,t) &= 0  &&\text{in } \Omega \setminus \partial \widetilde A(t)
\end{align}
in a distributional sense. In other words by (\ref{def:HA}), (\ref{eqn:HchiHarmonicRelation}), and (\ref{eq:ModificationPhase}), for almost every $t \in (0,T_*)$
\begin{align}
\label{eq:harmonicPotential2}
\partial_t\chi(\cdot,t) \in \mathcal{T}_{\chi(\cdot,t)}.
\end{align}
For a proof of~\eqref{eq:harmonicPotential}, fix $t \in (0,T_*)$ such 
that (\ref{eqn:distrEvolEquation}), \eqref{eq:LebesguePointPotential}, and \eqref{eq:ModificationPhase}
are satisfied. Fix $\zeta \in C^\infty_{c}(\Omega\setminus\overline{\widetilde A(t)};[0,\infty))$,
consider a sequence $s \downarrow t$ so that one may also apply~\eqref{eqn:distrEvolEquation}
for the choices $T=s$. Using~$\zeta$ as a constant-in-time test function in~\eqref{eqn:distrEvolEquation}
for $T=s$ and $T=t$, respectively, it follows from the nonnegativity
of~$\zeta$ with the first item of~\eqref{eq:ModificationPhase} that
\begin{align*}
0 \leq \frac{1}{s{-}t}\int_{\Omega} \chi_{A(s)} \zeta \,dx
= - \dashint_{t}^{s} \int_{\Omega} \nabla u \cdot \nabla \zeta \,dxdt'.
\end{align*}
Hence, for $s \downarrow t$ we deduce from the previous display
as well as~\eqref{eq:LebesguePointPotential} that
\begin{align*}
\int_{\Omega} \nabla u \cdot \nabla \zeta \,dx \leq 0
\quad\text{for all } \zeta \in C^\infty_{c}(\Omega\setminus\overline{\widetilde A(t)};[0,\infty)).
\end{align*}
Choosing instead a sequence $s \uparrow t$ so that one may apply~\eqref{eqn:distrEvolEquation}
for the choices $T=s$, one obtains similarly
\begin{align*}
\int_{\Omega} \nabla u \cdot \nabla \zeta \,dx \geq 0
\quad\text{for all } \zeta \in C^\infty_{c}(\Omega\setminus\overline{\widetilde A(t)};[0,\infty)).
\end{align*}
In other words, \eqref{eq:harmonicPotential} is satisfied 
throughout~$\Omega\setminus\overline{\widetilde A(t)}$
for all nonnegative test functions in~$H^1(\Omega\setminus\overline{\widetilde A(t)})$, 
hence also for all nonpositive test functions in~$H^1(\Omega\setminus\overline{\widetilde A(t)})$, 
and therefore for all smooth and compactly supported test functions 
in~$\Omega\setminus\overline{\widetilde A(t)}$. Adding and subtracting $\smash{\int_{\Omega} \zeta \,dx}$
to the left hand side of~\eqref{eqn:distrEvolEquation}, one may finally show along
the lines of the previous argument that \eqref{eq:harmonicPotential} is also satisfied 
throughout~$\widetilde A(t)$.

\textit{Step~11: Conclusion.}
We may now conclude the proof that $(\chi,\mu)$ is a varifold solution
for Mullins--Sekerka flow in the sense of Definition~\ref{def:VarifoldSolution},
for which it remains to verify item~\textit{iii)}. However, the desired
energy dissipation inequality \`{a} la De~Giorgi~\eqref{eq:DeGiorgiInequality}
now directly follows from~\eqref{eqn:Dissipation}, \eqref{eqn:potentialVmetricSlope}
and~\eqref{eq:harmonicPotential2}.

\textit{Step 12: $BV$~solutions.} 
Let~$\chi$ be a subsequential limit point
as obtained in~\eqref{eqn:chiStrongConvergence}. We now show that if the time-integrated energy convergence assumption~\eqref{eq:energyConvergenveAssumpMinMov} is satisfied then $\chi$ is a $BV$~solution. The main difficulty in proving this is showing that there exists a subsequence $h \downarrow 0$ such that
the De~Giorgi interpolants satisfy
\begin{align}
\label{eq:strictBVConvergence}
\bar\chi^h(\cdot,t) \to \chi(\cdot,t) \text{ strictly in } BV(\Omega;\{0,1\})
\text{ for a.e.\ } t \in (0,T_*)
\text{ as } h \downarrow 0.
\end{align}

Before proving~\eqref{eq:strictBVConvergence}, let us show how this
concludes the result. First, since~\eqref{eq:strictBVConvergence}
in particular means $|\nabla\bar\chi^h(\cdot,t)|(\Omega) \to |\nabla\chi(\cdot,t)|(\Omega)$
for a.e.\ $t \in (0,T_*)$ as $h \downarrow 0$, it follows from
Reshetnyak's continuity theorem, cf.\ \cite[Theorem~2.39]{AmbrosioFuscoPallara}, that
$$\mu^\Omega := \mathcal{L}^1 \llcorner (0,T_*) \otimes
\big(c_0\,d|\nabla\chi(\cdot,t)|\llcorner\Omega
\otimes (\delta_{\frac{\nabla\chi(\cdot,t)}
{|\nabla\chi(\cdot,t)|}(x)})_{x \in \Omega}\big)_{t \in (0,T_*)}$$
is the weak limit of $\mu^{h,\Omega}$, i.e., $\mu^{h,\Omega} \to \mu^{\Omega}$ weakly$^*$ in $\mathrm{M}((0,T_*)
{\times}\Omega{\times}\mathbb{S}^{d-1})$ as $h \to 0$.
Second, due to~\eqref{eq:strictBVConvergence} it follows from 
$BV$~trace theory, cf.\ \cite[Theorem~3.88]{AmbrosioFuscoPallara}, that
\begin{align}
\label{eq:traceConvergence}
\bar\chi^h(\cdot,t) \to \chi(\cdot,t) \text{ strongly in } L^1(\partial\Omega)
\text{ for a.e.\ } t \in (0,T_*)
\text{ as } h \downarrow 0.
\end{align}
Hence, defining $$\mu^{\partial\Omega} := \mathcal{L}^1 \llcorner (0,T_*) \otimes
\smash{\big(c_0(\cos\alpha)\chi(\cdot,t)\,\mathcal{H}^{d-1} \llcorner \partial\Omega
\otimes (\delta_{n_{\partial\Omega}(x)})_{x \in \partial\Omega}\big)_{t \in (0,T_*)}},$$
we have $\mu^{h,\partial\Omega} \to \mu^{\partial\Omega}$ weakly$^*$ in $\mathrm{M}((0,T_*)
{\times}\partial\Omega{\times}\mathbb{S}^{d-1})$ as $h \to 0$.
In summary, the relations~\eqref{eq:BVsol1} and~\eqref{eq:BVsol2} hold true as required.
Furthermore, defining $\mu := \mu^\Omega + \mu^{\partial\Omega}$, it follows from
the arguments of the previous steps that $(\chi,\mu)$ is a varifold solution
in the sense of Definition~\ref{def:VarifoldSolution}. In other words, $\chi$ is a 
$BV$~solution as claimed.

We now prove ~\eqref{eq:strictBVConvergence}.
To this end, we first show that~\eqref{eqn:chiStrongConvergence}
implies that there exists a subsequence $h \downarrow 0$
such that $E[\bar\chi^h(\cdot,t)] \to E[\chi(\cdot,t)]$
for a.e.\ $t \in (0,T_*)$. Indeed, since $E[\bar\chi^h(\cdot,t)]
\leq E[\chi^h(\cdot,t)]$ for all $t \in (0,T_*)$ by the optimality
constraint for a De~Giorgi interpolant (\ref{eq:minMovVariationalInterpol}), we may estimate
using the elementary relation $|a|=a+2a_-$ that
\begin{align*}
&\int_0^{T_*} \big|E[\bar\chi^h(\cdot,t)] - E[\chi(\cdot,t)]\big| \,dt
\\&
= \int_0^{T_*} \big( E[\bar\chi^h(\cdot,t)] - E[\chi(\cdot,t)] \big) \,dt
+ \int_0^{T_*} 2\big( E[\bar\chi^h(\cdot,t)] - E[\chi(\cdot,t)] \big)_- \,dt
\\&
\leq \int_0^{T_*} \big( E[\chi^h(\cdot,t)] - E[\chi(\cdot,t)] \big) \,dt
+ \int_0^{T_*} 2\big( E[\bar\chi^h(\cdot,t)] - E[\chi(\cdot,t)] \big)_- \,dt.
\end{align*}
The first right hand side term of the last inequality vanishes in
the limit ${h \downarrow 0}$ by assumption~\eqref{eq:energyConvergenveAssumpMinMov}.
For the second term on the right-hand side, we note that~\eqref{eqn:chiStrongConvergence}
entails that, up to a subsequence, we have 
$\bar\chi^h(\cdot,t) \to \chi(\cdot,t)$ strongly in $L^1(\Omega)$
for a.e.\ $t \in (0,T_*)$ as $h \downarrow 0$. Hence, the lower-semicontinuity
result of Modica~\cite[Proposition~1.2]{Modica1987} tells us that
$(E[\bar\chi^h(\cdot,t)] - E[\chi(\cdot,t)])_- \to 0$ pointwise a.e.\ in $(0,T_*)$
as $h \downarrow 0$, which in turn by Lebesgue's dominated convergence theorem
guarantees that the second term on the right-hand side of the last inequality vanishes
in the limit $h \downarrow 0$. In summary, for a suitable subsequence $h \downarrow 0$
\begin{align}
\label{eq:pointwiseConvergenceEnergies}
E[\bar\chi^h(\cdot,t)] &\to E[\chi(\cdot,t)]
&&\text{for a.e.\ } t \in (0,T_*),
\\ \label{eq:pointwiseL1Convergence}
\bar\chi^h(\cdot,t) &\to \chi(\cdot,t)
&&\text{strongly in } L^1(\Omega) \text{ for a.e.\ } t \in (0,T_*).
\end{align}

Now, due to~\eqref{eq:pointwiseL1Convergence} and the definition of strict convergence in~$BV(\Omega)$, \eqref{eq:strictBVConvergence} will follow if the total variations converge, i.e.,
\begin{align}
\label{eq:pointwiseConvergenceTV}
|\nabla\bar\chi^h(\cdot,t)|(\Omega) &\to |\nabla\chi(\cdot,t)|(\Omega)
&&\text{for a.e.\ } t \in (0,T_*) \text{ as } h \downarrow 0.
\end{align}
However, this is proven in a more general context 
(i.e., for the diffuse interface analogue
of the sharp interface energy~\eqref{def:energy})
in~\cite[Lemma~5]{Hensel2021d}. More precisely,
thanks to~\eqref{eq:pointwiseConvergenceEnergies} and~\eqref{eq:pointwiseL1Convergence},
one may simply apply the argument of~\cite[Proof of Lemma~5]{Hensel2021d}
with respect to the choices $\psi(u) := c_0 u$, $\sigma(u) := (\cos\alpha)c_0 u$
and $\tau(u) := (\sigma\circ\psi^{-1})(u) = (\cos\alpha) u$
to obtain~\eqref{eq:pointwiseConvergenceTV}, which in turn concludes the proof of Theorem~\ref{theo:mainResult}.
\qed

\subsection{Proofs for further properties of varifold solutions}\label{subsec:proofsFurtherProperties}
In this subsection, we present the proofs for the various
further results on varifold solutions to Mullins--Sekerka flow
as mentioned in Subsection~\ref{subsec:furtherProperties}.

\begin{proof}[Proof of Lemma~\ref{lem:metricSlope}]
The proof is naturally divided into two parts.

\textit{Step 1: Proof of ``$\leq$'' in~\eqref{def:MSmetricSlope2}
without assuming~\eqref{eq:repTangentialFirstVariation}.}
To simplify the notation, we denote $\chi_s : = \chi\circ \Psi_s^{-1}$
resp.\ $\mu_s : = \mu\circ \Psi_s^{-1}$ and 
abbreviate the right-hand side of~(\ref{def:MSmetricSlope2}) as 
\begin{align}
\label{eq:abbrevRHS}
A := \sup_{\substack{\partial_s \chi_s|_{s=0} = -B \cdot \nabla \chi 
\\ \chi_s \to \chi, \ B\in \mathcal{S}_{\chi}}} 
\limsup_{s\downarrow 0} \frac{(E[\mu_s] - E[\mu])_{+}}{\|\chi_s - \chi\|_{H^{-1}_{(0)}}}.
\end{align}

Fixing a flow $\chi_s$ such that $\chi_s \to \chi$ as $s \to 0$ with 
$\partial_s \chi_s|_{s=0} = -B\cdot \nabla \chi$ for some $B \in \mathcal{S}_{\chi}$, 
we claim that the upper bound~\eqref{def:MSmetricSlope2} follows from the assertions
\begin{align}
\label{eq:firstVarAux1}
\lim_{s \to 0} \frac{1}{s}\big(E [\mu_s] - E[\mu]\big) &= \delta\mu(B),
\\ \label{eq:firstVarAux2}
\lim_{s \to 0} \frac{1}{s}\|\chi_s - \chi\|_{H^{-1}_{(0)}} 
&= \|B\cdot\nabla\chi\|_{H^{-1}_{(0)}}.
\end{align}
Indeed, multiplying the definition in~\eqref{eq:abbrevRHS} 
by $1 = s/s$, using the inequality $\frac{1}{2}(a/b)^2 \geq a - \frac{1}{2}b^2$,
and recalling the notation~\eqref{def:normVel}, we find
\begin{equation}\label{eqn:AgeqMetric}
\begin{aligned}
\frac{1}{2}A^2 &\geq \lim_{s \to 0} \frac{E [\mu_s] - E[\mu]}{s} - \frac{1}{2} 
\lim_{s \to 0} \left( \frac{\|\chi_s - \chi\|_{H^{-1}_{(0)}}}{s} \right)^2 \\
 & =\delta \mu (B) - \frac{1}{2} \|B\cdot\nabla\chi\|_{\mathcal{V}_\chi}.
 \end{aligned}
\end{equation}
Recalling Lemma~\ref{lem:existenceDiffeos}, taking the supremum 
over $B \in \mathcal{S}_\chi$ thus yields ``$\leq$'' in~\eqref{def:MSmetricSlope2}.
It therefore remains to establish~\eqref{eq:firstVarAux1} and~\eqref{eq:firstVarAux2}.
However, the former is a classical and well-known result~\cite{Allard1972} 
whereas the latter is established in Lemma~\ref{lem:firstVarDeGiorgi}.

\textit{Step 2: Proof of ``$\geq$'' 
in~\eqref{def:MSmetricSlope2} assuming~\eqref{eq:repTangentialFirstVariation}.}
To show equality under the additional assumption of~\eqref{eq:repTangentialFirstVariation}, 
we may suppose that $|\partial E[\mu]|_{\mathcal{V}_\chi}<\infty$. 
First, we note that $B \cdot \nabla\chi \mapsto \delta \mu (B)$ is a well defined operator 
on $\{B \cdot \nabla \chi:B \in \mathcal{S}_{\chi}\}\subset \mathcal{V}_\chi $ (see definition (\ref{def:surfaceVelocities})). To see this, let $B\in \mathcal{S}_\chi$ be any function such that
 $B\cdot \nabla \chi = 0$ in $\mathcal{V}_\chi \subset H^{-1}_{(0)}$.
Recall that $\int_\Omega \chi \nabla\cdot B\, dx = 0$ by the definition of $\mathcal{S}_{\chi}$ in (\ref{def:variationVelocities}) to 
find that for all $\phi \in C^1(\overline{\Omega})$, one has 
$$\int_{\Omega} \phi B \cdot \frac{\nabla \chi}{|\nabla \chi|}\, d |\nabla \chi|
= \int_{\Omega} \Big(\phi - \dashint_{\Omega} \phi \,dx\Big) 
B \cdot \frac{\nabla \chi}{|\nabla \chi|}\, d |\nabla \chi| = 0.$$ 
The above equation implies that $B \cdot \frac{\nabla \chi}{|\nabla \chi|} = 0$ 
for $|\nabla \chi|-$almost every $x$ in $\Omega$, which by the representation of the first variation in terms of the curvature in (\ref{eq:repTangentialFirstVariation}) 
shows $\delta \mu(B) = 0.$ Linearity shows the operator is well-defined 
on $\{B \cdot \nabla \chi:B \in \mathcal{S}_{\chi}\}$.

With this in hand, the bound $|\partial E[\mu]|_{\mathcal{V}_\chi}<\infty$ 
implies that the mapping $B\cdot\nabla\chi\mapsto\delta\mu(B)$ can be extended to a bounded 
linear operator $L\colon\mathcal{V}_\chi \to \mathbb{R},$ which is identified with an 
element $L \in \mathcal{V}_\chi$ by the Riesz isomorphism theorem. 
Consequently, 
\begin{align}
\frac{1}{2}|\partial E[\mu]|_{\mathcal{V}_\chi}^2 
= \sup_{B \in \mathcal{S}_\chi} \left\{(L,B \cdot \nabla \chi)_{\mathcal{V}_{\chi}} 
- \frac{1}{2}\| B \cdot \nabla \chi\|_{\mathcal{V}_\chi}^2 \right\}
= \frac{1}{2}\|L\|_{\mathcal{V}_\chi}^2.
\end{align}
But recalling~\eqref{eq:firstVarAux1} and~\eqref{eq:firstVarAux2}, one also has that
\begin{align*}
A &= \sup_{\substack{\partial_s \chi_s|_{s=0} 
= -B \cdot \nabla \chi \\ \chi_s \to \chi, \ B\in \mathcal{S}_{\chi}}}  
\frac{\big(\lim_{s \to 0}\frac{1}{s}(E[\mu_s] - E[\mu])\big)_+}
{\lim_{s \to 0}\frac{1}{s}\|\chi_s - \chi\|_{H^{-1}_{(0)}}}
= \sup_{\ B\in \mathcal{S}_{\chi}}  
\frac{(\delta \mu (B))_{+}}{\| B \cdot \nabla \chi\|_{\mathcal{V}_{\chi}}} 
= \|L\|_{\mathcal{V}_{\chi}}.
\end{align*}
The previous two displays complete the proof that $A = |\partial E[\mu]|_{\mathcal{V}_\chi}.$
\end{proof}

\begin{proof}[Proof of Lemma~\ref{lem:PDEinterpretation}]
We proceed in three steps.

\textit{Proof of item~i):}
We first observe that by~\eqref{eq:DeGiorgiInequality} 
\begin{align*}
\int_{0}^{T_*} \frac{1}{2} \big\|\partial_t\chi\big\|_{H^{-1}_{(0)}(\Omega)}^2 \,dt = \int_{0}^{T_*} 
\frac{1}{2} \big\|\partial_t\chi\big\|_{\mathcal{T}_{\chi(\cdot,t)}}^2 \, dt
\leq E[\mu_0] < \infty,
\end{align*}
which in turn simply means that there exists
$u \in L^2(0,T_{*};\mathcal{H}_{\chi(\cdot,t)}) \subset L^2(0,T_{*};H^1_{(0)})$ such that
\begin{align}
\label{eq:propertiesPotentialMetricDerivative}
\partial_t\chi(\cdot,t) = \Delta_N u(\cdot,t) 
\quad\text{for a.e.\ } t \in (0,T_*).
\end{align}
In other words, recalling~\eqref{eqn:invLapRelation},
\begin{align}
\label{eq:evolEquationPhase2}
\int_{0}^{T_*} \int_{\Omega} \chi \p_t\zeta \,dxdt
= \int_{0}^{T_*} \int_{\Omega} \nabla u \cdot \nabla\zeta \,dxdt
\quad\text{for all } \zeta \in C^1_{cpt}(\overline{\Omega} {\times} (0,T_{*})).
\end{align}
By standard PDE~arguments and the requirement 
$\chi \in C([0,T_*);\smash{H^{-1}_{(0)}(\Omega)})$
such that ${\rm Tr}|_{t=0}\chi = \chi_0$ in $H^{-1}_{(0)}$,
one may post-process the previous display to~\eqref{eq:evolEquationPhase}.					
				
\textit{Proof of item~ii):} Thanks to~\eqref{eq:repTangentialFirstVariation},
we may apply the argument from the proof of Lemma~\ref{lem:metricSlope}
to infer that the map $B\cdot\nabla\chi(\cdot,t) \mapsto \delta\mu_t(B)$, $B \in \mathcal{S}_{\chi(\cdot,t)}$,
is well-defined and extends to a unique bounded and linear functional
$L_t\colon\mathcal{V}_{\chi(\cdot,t)} \to \mathbb{R}$. Recalling the definition
$\mathcal{G}_{\chi(\cdot,t)} = \Delta_N^{-1}(\mathcal{V}_{\chi(\cdot,t)}) \subset H^1_{(0)}$
and the fact that the weak Neumann Laplacian $\Delta_N\colon H^1_{(0)} \to H^{-1}_{(0)}$
is nothing else but the Riesz isomorphism between~$H^1_{(0)}$ and its dual~$H^{-1}_{(0)}$,
it follows from~\eqref{def:MSmetricSlope1} and~\eqref{eq:DeGiorgiInequality} that
there exists a potential $w_0 \in L^2(0,T_*;\mathcal{G}_{\chi(\cdot,t)})$	
such that
\begin{align*}
\delta\mu_t(B) = L_t\big(B\cdot\nabla\chi(\cdot,t)\big) 
= - \langle B\cdot\nabla\chi(\cdot,t), w_0(\cdot,t)\rangle_{H^{-1}_{(0)},H^1_{(0)}}
\end{align*}
for almost every $t \in (0,T_*)$ and all $B \in \mathcal{S}_{\chi(\cdot,t)}$,
as well as
\begin{align}
\label{eq:proofAux1}
\frac{1}{2}\|w_0(\cdot,t)\|^2_{\mathcal{G}_{\chi(\cdot,t)}} 
= \frac{1}{2}\big|\partial E [\mu_t]\big|^2_{\mathcal{V}_{\chi(\cdot,t)}}
\end{align}
for almost every $t \in (0,T_*)$. In particular, 
\begin{align}
\delta\mu_t(B) = \int_{\Omega} \chi(\cdot,t) \nabla\cdot\big(
B w_0(\cdot,t)\big) \,dx
\end{align}	
for almost every $t \in (0,T_*)$ and all $B \in \mathcal{S}_{\chi(\cdot,t)}$.
Due to Lemma~\ref{lem:AbelsRoeger}, there exists a measurable 
Lagrange multiplier $\lambda\colon (0,T_*)\to\mathbb{R}$
such that 
\begin{align}
\label{eq:proofAux2}
\delta\mu_t(B) = \int_{\Omega} \chi(\cdot,t) \nabla\cdot\big(
B \big(w_0(\cdot,t) {+} \lambda(t)\big)\big) \,dx
\end{align}
for almost every $t \in (0,T_*)$ and all $B \in C^1(\overline{\Omega};\Rd)$
with $(B \cdot n_{\p\Omega})|_{\p\Omega} \equiv 0$, and that there
exists a constant $C = C(\Omega,d,c_0,m_0) > 0$ such that
\begin{align}
\label{eq:proofAux3}
|\lambda(t)| \leq C \big(1 + |\nabla\chi(\cdot,t)|(\Omega)\big)
\big(|\mu_t|_{\mathbb{S}^{d-1}}(\overline{\Omega}) + \|\nabla w_0(\cdot,t)\|_{L^2(\Omega)}\big)
\end{align}
for almost every $t \in (0,T_*)$. Due to \eqref{eq:proofAux1}--\eqref{eq:proofAux3},
\eqref{eq:comp1} and~\eqref{eq:DeGiorgiInequality}, it follows
that the potential $w:=w_0+\lambda$ satisfies the desired
properties~\eqref{eq:GibbsThomson}--\eqref{eq:L2control}.

\textit{Proof of item~iii):} The De~Giorgi type energy dissipation inequality
in the form of~\eqref{eq:DeGiorgiInequality2} now
directly follows from~\eqref{eq:normEquivalenceGibbsThomsonPotential} 
and~\eqref{eq:propertiesPotentialMetricDerivative}.
\end{proof}

\begin{proof}[Proof of Remark~\ref{rem:existencePDEPerspective}]
A careful inspection of the proof of Theorem~\ref{theo:mainResult}
shows that the conclusions~\eqref{eqn:GTasSobolev} and~\eqref{eqn:Dissipation}
are indeed independent of an assumption on the value of the
ambient dimension $d \geq 2$. The same is true for the estimate~\eqref{eq:boundH1NormPotential} 
of Lemma~\ref{lem:AbelsRoeger}. The claim then immediately follows
from these observations and the first step of the proof of Lemma~\ref{lem:PDEinterpretation}.
\end{proof}

\begin{proof}[Proof of Lemma~\ref{lem:propertiesVarifoldSolutions}]
We start with a proof of the two consistency claims
and afterward give the proof of the compactness statement.

\textit{Step 1: Classical solutions are $BV$~solutions.}
Let~$\mathscr{A}$ be a classical solution for Mullins--Sekerka flow in the sense 
of~\emph{\eqref{eq:MullinsSekerka1}--\eqref{eq:MullinsSekerka6}}.
Define $\chi(x,t) := \chi_{\mathscr{A}(t)}(x)$ for all $(x,t) \in \Omega \times [0,T_*)$.
As one may simply define the associated 
varifold by means of $|\mu_t^\Omega|_{\mathbb{S}^{d-1}} := c_0|\nabla\chi(\cdot,t)|
\llcorner \Omega = c_0\mathcal{H}^{d-1}\llcorner(\partial^*\mathscr{A}(t) \cap \Omega)$,
$|\mu_t^{\partial\Omega}|_{\mathbb{S}^{d-1}} := c_0(\cos\alpha)
\chi(\cdot,t)\mathcal{H}^{d-1}\llcorner\Omega = c_0(\cos\alpha)
\mathcal{H}^{d-1}\llcorner(\partial^*\mathscr{A}(t) \cap \partial\Omega)$,
and \eqref{eq:BVsol2} with $\frac{\nabla\chi(\cdot,t)}{|\nabla\chi(\cdot,t)|}=n_{\partial\mathscr{A}(t)}$,
item~\textit{i)} of Definition~\ref{def:VarifoldSolution} is trivially satisfied.
Note that the varifold energy $|\mu_t|_{\mathbb{S}^{d-1}}(\overline{\Omega})$
simply equals the $BV$ energy functional $E[\chi(\cdot,t)]$ from~\eqref{def:energy}.

Due to the smoothness of the geometry and the validity of 
the contact angle condition~\eqref{eq:MullinsSekerka6} in the pointwise strong sense,
an application of the classical first variation formula together with
an integration by parts along each of the smooth manifolds
$\partial^*\mathscr{A}(t) \cap \Omega$
and $\partial^*\mathscr{A}(t) \cap \p\Omega$ ensures
(recall the notation from Subsection~\ref{subsec:strongFormulation})
\begin{align}
\nonumber
\delta E[\chi(\cdot,t)](B)
&= c_0 \int_{\partial^*\mathscr{A}(t) \cap \Omega} 
\nabla^\mathrm{tan}\cdot B \,d\mathcal{H}^{d-1}
+ c_0 (\cos\alpha) \int_{\partial^*\mathscr{A}(t) \cap \partial\Omega} 
\nabla^\mathrm{tan}\cdot B \,d\mathcal{H}^{d-1}
\\& \label{eq:classicalFirstVar}
= - c_0 \int_{\partial^*\mathscr{A}(t) \cap \Omega} 
H_{\partial\mathscr{A}(t)} \cdot B \,d\mathcal{H}^{d-1}
\end{align}
for all tangential variations $B \in C^1(\overline{\Omega};\Rd)$.
Hence, the identity \eqref{eq:repTangentialFirstVariation} holds with
$H_{\chi(\cdot,t)} = H_{\partial\mathscr{A}(t)} \cdot n_{\partial\mathscr{A}(t)}$
and the asserted integrability of~$H_{\chi(\cdot,t)}$ follows
from the boundary condition~\eqref{eq:MullinsSekerka3} and a standard 
trace estimate for the potential~$\bar u(\cdot,t)$. 
%\ks{*** why not just smooth geometry ***}

It remains to show~\eqref{eq:DeGiorgiInequality}. Starting point for this
is again the above first variation formula, now in the form of
\begin{align}
\label{eq:energyEvol1}
\frac{d}{dt} E[\chi(\cdot,t)] = - c_0 \int_{\partial^*\mathscr{A}(t) \cap \Omega}
H_{\partial\mathscr{A}(t)} \cdot V_{\partial\mathscr{A}(t)} \,d\mathcal{H}^{d-1}.
\end{align}
Plugging in~\eqref{eq:MullinsSekerka2} and~\eqref{eq:MullinsSekerka3},
integrating by parts in the form of~\eqref{eq:defJump},
and exploiting afterward~\eqref{eq:MullinsSekerka1} and~\eqref{eq:MullinsSekerka5},
we arrive at
\begin{align}
\label{eq:energyEvol2}
\frac{d}{dt} E[\chi(\cdot,t)] = - \int_{\Omega} |\nabla\bar{u}(\cdot,t)|^2 \,dx.
\end{align}
The desired inequality~\eqref{eq:DeGiorgiInequality} will follow once we prove
\begin{align}
\label{eq:identification1}
\|\partial_t\chi(\cdot,t)\|_{H^{-1}_{(0)}} &= \|\nabla\bar{u}(\cdot,t)\|_{L^2(\Omega)},
\\
\label{eq:identification2}
\big|\partial E [\mu_t]\big|_{\mathcal{V}_{\chi(\cdot,t)}}
&= \|\nabla\bar{u}(\cdot,t)\|_{L^2(\Omega)}.
\end{align}

Exploiting that the geometry underlying~$\chi$ is smoothly evolving,
i.e., $(\partial_t\chi)(\cdot,t) = -V_{\partial\mathscr{A}(t)}\cdot (\nabla\chi\llcorner\Omega)(\cdot,t)$,
the claim~\eqref{eq:identification1} is a consequence of the identity 
\begin{align*}
\int_{\Omega} \nabla\Delta^{-1}_N(\partial_t\chi)(\cdot,t) \cdot \nabla \phi \,dx
&= - \langle (\partial_t\chi)(\cdot,t), \phi \rangle_{H^{-1}_{(0)},H^1_{(0)}}
\\&
= - \int_{\partial^*\mathscr{A}(t) \cap \Omega} n_{\partial\mathscr{A}(t)} \cdot
\jump{\nabla \bar{u}(\cdot,t)} \phi \,d\mathcal{H}^{d-1}
\\&
= \int_{\Omega} \nabla\bar{u}(\cdot,t)\cdot\nabla\phi \,dx
\end{align*}
valid for all $\phi \in H^1_{(0)}$, where in the process we again made use
of~\eqref{eq:MullinsSekerka2}, \eqref{eq:MullinsSekerka1}, \eqref{eq:MullinsSekerka5},
and an integration by parts in the form of~\eqref{eq:defJump}.

For a proof of~\eqref{eq:identification2}, we first note that thanks to~\eqref{eq:classicalFirstVar}
and~\eqref{eq:MullinsSekerka3} it holds
\begin{align*}
\delta E[\chi(\cdot,t)](B) &= - \int_{\partial^*\mathscr{A}(t) \cap \Omega} 
\bar{u}(\cdot,t) n_{\partial\mathscr{A}(t)} \cdot B \,d\mathcal{H}^{d-1}
\\&
= \int_{\Omega} \nabla\bar{u}(\cdot,t)\cdot\nabla\Delta_N^{-1}(B\cdot\nabla\chi(\cdot,t)) \,dx
\end{align*}
for all $B \in \mathcal{S}_{\chi(\cdot,t)}$. Hence, in view of~\eqref{def:MSmetricSlope1}
it suffices to prove that for each fixed $t \in (0,T_*)$ there exists 
$\bar{B}(\cdot,t) \in \mathcal{S}_{\chi(\cdot,t)}$ such that
$\bar u(\cdot,t) = \Delta_N^{-1} (\bar{B}(\cdot,t)\cdot\nabla\chi(\cdot,t))$.

To construct such a $\bar B$, first note that from~\eqref{eq:MullinsSekerka1}, \eqref{eq:MullinsSekerka5} and~\eqref{eq:defJump}, we have $\Delta_N \bar{u}(\cdot,t) = (n_{\partial\mathscr{A}(t)} \cdot
\jump{\nabla \bar{u}(\cdot,t)}) \mathcal{H}^{d-1} \llcorner (\partial^*\mathscr{A}(t) \cap \Omega)$
in the sense of distributions.
Consequently, $\bar B(\cdot,t) \in C^1(\overline{ \Omega};\Rd)$ satisfying
\begin{equation}\label{eqn:Brequire}
\begin{aligned}
n_{\partial\mathscr{A}(t)} \cdot \bar{B}(\cdot,t) & =n_{\partial\mathscr{A}(t)} \cdot \jump{\nabla \bar{u}(\cdot,t)} \quad \text{ on } \partial^*\mathscr{A}(t) \cap \Omega,  \\
n_{\partial \Omega} \cdot \bar B(\cdot, t) &= 0 \quad ~~~~~~~~~~~~~~~~~~~~~\, \text{ on }  \partial \Omega,
\end{aligned}
\end{equation}
for which one has (using~\eqref{eq:MullinsSekerka1} and~\eqref{eq:massPreservation})
\begin{align*}
\int_{\Omega} \chi(\cdot,t) \nabla\cdot\bar{B}(\cdot,t) \,dx
=  - \int_{\partial\mathscr{A}(t) \cap \Omega} 
n_{\partial\mathscr{A}(t)} \cdot \jump{\nabla \bar{u}(\cdot,t)}  \,d\mathcal{H}^{d-1}
= 0,
\end{align*}
showing that $\bar B(\cdot,t) \in \mathcal{S}_{\chi(\cdot, t)},$ will satisfy the claim.

To this end, one first constructs a $C^1$ vector field $\mathcal{B}$ 
defined on $\overline{\partial\mathscr{A}(t) \cap \Omega}$ with the properties
that $n_{\partial\mathscr{A}(t)} \cdot \jump{\nabla \bar{u}(\cdot,t)}
= n_{\partial\mathscr{A}(t)} \cdot \mathcal{B}$ and $\mathcal{B}\cdot n_{\partial \Omega}=0$. Such $\mathcal{B}$ can be constructed by using a partition of unity on the manifold with boundary $\overline{\partial \mathscr{A}(t)\cap \Omega}$. Away from the boundary, set $\mathcal{B} := (n_{\partial\mathscr{A}(t)} \cdot \jump{\nabla \bar{u}(\cdot,t)})n_{\partial\mathscr{A}(t)}$, and near the boundary one can define $\mathcal{B}$ as an appropriate lifting of $$ \frac{(n_{\partial\mathscr{A}(t)} \cdot \jump{\nabla \bar{u}(\cdot,t)})}{\cos(\pi/2 - \alpha)}\tau_{\partial \mathscr{A}(t) \cap \partial \Omega},$$ where $\tau_{ \partial \mathscr{A}(t)\cap \partial \Omega}$ is the unit length vector field tangent to $\partial \Omega$ normal to the contact points manifold $\overline{\partial \mathscr{A}(t)\cap \Omega} \cap \partial \Omega $ and with $n_{\partial \mathscr{A}(t)}\cdot \tau_{\partial \mathscr{A}(t) \cap \partial \Omega } = \cos(\pi/2-\alpha).$ With the vector field~$\mathcal{B}$ in place,
any tangential $\bar{B}(\cdot,t ) \in C^1(\overline{\Omega};\Rd)$ extending $\mathcal{B}$ satisfies (\ref{eqn:Brequire}),
which in turn concludes the proof of the first step.

%\textit{Step 2: Smooth $BV$~solutions are classical solutions.}
%Let $\chi$ be a smooth $BV$~solution, i.e., it satisfies on one side
%the corresponding requirements from Definition~\ref{def:VarifoldSolution}
%and on the other side can be represented as $\chi(x,t) = \chi_{\mathscr{A}(t)}(x)$
\textit{Step 2: Smooth varifold solutions are classical solutions.}
Let $(\chi,\mu)$ be a varifold solution with smooth geometry, i.e., satisfying Definition~\ref{def:VarifoldSolution}
%with the additional condition that $\smash{\frac{1}{c_0}}
%\mu_t^\Omega \in \mathrm{M}(\overline{\Omega}{\times}\mathbb{S}^{d-1})$ 
%is an integer oriented varifold for all $t \in (0,T_*)$,
and such that the indicator~$\chi$ can be represented as $\chi(x,t) = \chi_{\mathscr{A}(t)}(x)$,
where $\mathscr{A}=(\mathscr{A}(t))_{t \in [0,T_{*})}$
is a time-dependent family of smoothly evolving subsets $\mathscr{A}(t) \subset \Omega$,
$t \in [0,T_*)$, as in the previous step. It is convenient for what follows
to work with the two potentials~$u$ and~$w$ satisfying the conclusions
of Lemma~\ref{lem:PDEinterpretation}, i.e., 
\eqref{eq:evolEquationPhase}--\eqref{eqn:curvature=trace}.

Fix $t \in (0,T_*)$ such that~\eqref{eqn:curvature=trace} holds.
By regularity of $\supp|\nabla\chi(\cdot,t)| \llcorner \Omega
= \partial\mathscr{A}(t) \cap \Omega$ and the fact that
$H_{\chi(\cdot,t)}$ is the generalized mean curvature vector 
of~$\supp|\nabla\chi(\cdot,t)| \llcorner \Omega$
in the sense of R\"{o}ger~\cite[Definition~1.1]{Roeger2004},
it follows from R\"{o}ger's result~\cite[Proposition~3.1]{Roeger2004} that
\begin{align}
\label{eq:trueCurvature=RoegerCurvature}
H_{\partial\mathscr{A}(t)} &= H_{\chi(\cdot,t)}
&& \mathcal{H}^{d-1}\text{-a.e.\ on }
\partial\mathscr{A}(t) \cap \Omega.
\end{align}
Hence, we deduce from~\eqref{eqn:curvature=trace} that
\begin{align}
\label{eq:MullinsSekerka3Aux}
w(\cdot,t)n_{\partial\mathscr{A}(t)} &= c_0 H_{\partial\mathscr{A}(t)}
&& \mathcal{H}^{d-1}\text{-a.e.\ on } \partial\mathscr{A}(t) \cap \Omega.
\end{align}
Recalling $w\in \mathcal{G}_\chi \subset \mathcal{H}_\chi$ and (\ref{eqn:HchiHarmonicRelation}) 
shows that $w$ is harmonic in $\mathscr{A}$ and in the interior of $\Omega\setminus \mathscr{A}$. 
One may then apply standard elliptic regularity theory for the Dirichlet problem \cite{EvansPDE} to 
obtain a continuous representative for $w$ and further conclude that (\ref{eq:MullinsSekerka3Aux}) 
holds everywhere on $\partial\mathscr{A}(t) \cap \Omega$.

Next, we take care of the contact angle condition~\eqref{eq:MullinsSekerka6}.
To this end, we denote by $\tau_{\p\mathscr{A}(t) \cap \Omega}$
a vector field on the contact points manifold, $\p(\overline{\partial \mathscr{A}(t)\cap \Omega}) \subset \partial \Omega $, that is tangent 
to the interface $\p\mathscr{A}(t) \cap \Omega$, normal to the contact points
manifold, and which points away from $\p\mathscr{A}(t) \cap \Omega$. We further denote by
$\tau_{\p\mathscr{A}(t) \cap \partial\Omega}$ a vector field along the contact points manifold 
which now is tangent to~$\partial\Omega$, again normal to the contact points
manifold, and which this time points towards $\p\mathscr{A}(t) \cap \partial\Omega$.
Note that by these choices,
at each point of the contact points manifold the vector fields $\tau_{\p\mathscr{A}(t) \cap \Omega}$, 
$n_{\partial\mathscr{A}(t)}$, $\tau_{\p\mathscr{A}(t) \cap \partial\Omega}$ and $n_{\p\Omega}$ 
lie in the normal space of the contact points manifold, 
and that the orientations were precisely chosen such that
$\tau_{\p\mathscr{A}(t) \cap \Omega} \cdot \tau_{\p\mathscr{A}(t) \cap \partial\Omega}
= n_{\partial\mathscr{A}(t)} \cdot n_{\p\Omega}$. With these constructions in place,
we obtain from the classical first variation formula and an integration by parts
along  $\partial\mathscr{A}(t) \cap \Omega$ 
and $\partial\mathscr{A}(t) \cap \p\Omega$ that
\begin{align}
\nonumber
&\delta E[\chi(\cdot,t)](B)
\\& \nonumber
= c_0 \int_{\partial\mathscr{A}(t) \cap \Omega} 
\nabla^\mathrm{tan}\cdot B \,d\mathcal{H}^{d-1}
+ c_0 (\cos\alpha) \int_{\partial\mathscr{A}(t) \cap \partial\Omega} 
\nabla^\mathrm{tan}\cdot B \,d\mathcal{H}^{d-1}
\\& \nonumber
= - c_0 \int_{\partial\mathscr{A}(t) \cap \Omega} 
H_{\partial\mathscr{A}(t)} \cdot B \,d\mathcal{H}^{d-1}
\\&~~~ \nonumber
+ c_0 \int_{\p(\partial\mathscr{A}(t) \cap \Omega)} 
 (\tau_{\p\mathscr{A}(t) \cap \Omega} - 
(\cos\alpha)\tau_{\p\mathscr{A}(t) \cap \p\Omega}) \cdot B \,d\mathcal{H}^{d-2}
\\& \label{eq:classicalFirstVar2}
= - c_0 \int_{\partial\mathscr{A}(t) \cap \Omega} 
H_{\partial\mathscr{A}(t)} \cdot B \,d\mathcal{H}^{d-1}
\\&~~~ \nonumber
+ c_0 \int_{\p(\partial\mathscr{A}(t) \cap \Omega)} 
 (\tau_{\p\mathscr{A}(t) \cap \Omega}  \cdot \tau_{\p\mathscr{A}(t) \cap \p\Omega} - 
 \cos\alpha) (\tau_{\p\mathscr{A}(t) \cap \p\Omega} \cdot B) \,d\mathcal{H}^{d-2}
\end{align}
for all tangential variations $B \in C^1(\overline{\Omega})$.
Recall now that we assume that~\eqref{eq:repTangentialFirstVariation} even holds
with $\delta\mu_t$ replaced on the left hand side by $\delta E[\chi(\cdot,t)]$. 
In particular, $\delta\mu_t(B) = \delta E[\chi(\cdot,t)](B)$ for all
$B \in \mathcal{S}_{\chi(\cdot,t)}$ so that the argument from
the proof of Lemma~\ref{lem:PDEinterpretation}, item~\textit{$ii)$}, shows
\begin{align*}
\delta E[\chi(\cdot,t)](B) = 
- \int_{\partial^*\mathscr{A}(t) \cap \Omega} w(\cdot,t) 
n_{\partial\mathscr{A}(t)} \cdot B \,d\mathcal{H}^{d-1}
\end{align*}
for all tangential $B \in C^1(\overline\Omega;\Rd)$.
Because of~\eqref{eq:MullinsSekerka3Aux}, we thus infer from~\eqref{eq:classicalFirstVar2} that
the contact angle condition~\eqref{eq:MullinsSekerka6} indeed holds true.

Note that for each $t$ in $(0,T^*)$, the potential $u$ satisfies
\begin{align*}
\partial_t\chi(\cdot,t) = -V_{\partial\mathscr{A}(t)} 
\cdot (\nabla\chi\llcorner\Omega)(\cdot,t) = \Delta_N u.
\end{align*}
In particular, by the assumed regularity of $\supp|\nabla\chi(\cdot,t)| \llcorner \Omega
= \partial\mathscr{A}(t) \cap \Omega$, 
%there is this time no need
%to pass to a modification of~$\mathscr{A}(t)$
\begin{align}
\label{eq:MullinsSekerka1Aux}
\Delta u(\cdot,t) &= 0
&& \text{in } \Omega \setminus \p\mathscr{A}(t).
\end{align}
Furthermore, we claim that
\begin{equation}
\begin{aligned}
\label{eq:MullinsSekerka2Aux}
V_{\partial\mathscr{A}(t)} &= - \big(n_{\partial\mathscr{A}(t)}\cdot
\jump{\nabla u(\cdot,t)}\big)n_{\partial\mathscr{A}(t)}
&& \text{on } \p\mathscr{A}(t) \cap \Omega,
\\ 
(n_{\p\Omega} \cdot \nabla) u(\cdot,t) &= 0
&&\text{on } \p\Omega \setminus \overline{\partial \mathscr{A}(t)\cap \Omega}.
\end{aligned}
\end{equation}
To prove~\eqref{eq:MullinsSekerka2Aux}, 
we suppress for notational convenience the time variable and show
for any open set $\mathcal{O}\subset \Omega$ which does 
not contain the contact points manifold 
$\partial(\overline{\partial \mathscr{A}(t)\cap \Omega}) \subset \partial\Omega$ that
\begin{equation}\label{eqn:uAuxRegularity}
u \in H^3(\mathcal{O}\cap \mathscr{A}(t)) \cap 
H^3(\mathcal{O}\cap (\Omega\setminus\overline{\mathscr{A}(t)})).
\end{equation} 
With this, $\nabla u$ will have continuous representatives in 
$\overline{\mathscr{A}(t)}$ and $\overline{\Omega\setminus \mathscr{A}(t)}$ excluding contact points, 
from which (\ref{eq:MullinsSekerka2Aux}) will follow by applying the integration by parts 
formula~\eqref{eq:defJump}. Note that typical estimates apply for the Neumann problem 
if $\mathcal{O}$ does not intersect $\partial \mathscr{A}(t)$, and consequently to 
conclude (\ref{eqn:uAuxRegularity}), it suffices to prove regularity in the case of a 
flattened and translated interface $\partial \mathscr{A}(t)$ with $u$ truncated, that is, 
for $u$ satisfying
\begin{equation}
\label{eq:auxNeumannProblemPotential}
\begin{aligned}
\Delta u  &= -V \mathcal{H}^{d-1}\llcorner \{x_d = 0\} &&\text{ in } B(0,1), \\
u  &=0 &&\text{ on }\partial B(0,1),
\end{aligned}
\end{equation}
with $V$ smooth.
The above equation can be differentiated for all 
multi-indices $\beta \subset \mathbb{N}^{d-1}$
representing tangential directions, showing 
that $ \partial^\beta u \in H^1(B(0,1)) $. Rearranging (\ref{eq:MullinsSekerka1Aux})
to extract $\partial_d^2 u$ from the Laplacian, 
we have that $ u$ belongs to $H^2(B(0,1)\setminus\{x_d=0\}).$ To control the higher derivatives, 
note by the comment regarding multi-indices, we already have 
$\partial_i\partial_j\partial_d u \in L^2(\Omega)$ for all $i,j \neq d$. 
Furthermore, differentiating~\eqref{eq:auxNeumannProblemPotential}
with respect to the $i$-th direction, where $i \in \{1,\ldots,d{-}1\}$, 
and repeating the previous argument shows $\partial_d^2\partial_i u \in L^2(B(0,1)\setminus \{x_d = 0\})$.
Finally, differentiating (\ref{eq:MullinsSekerka1Aux}) with respect to the $d$-th direction
away from both~$\partial\mathscr{A}$ and~$\partial\Omega$ and then extracting
$\partial_d^3 u$ from $\Delta \partial_d u$, we get $\partial_d^3 u \in L^2(B(0,1)\setminus \{x_d = 0\})$,
finishing the proof of (\ref{eqn:uAuxRegularity}).

%We note that by the same argument as in \textit{Step~10} of the proof of
%Theorem~\ref{theo:mainResult}, it follows that
%(by the assumed regularity of $\supp|\nabla\chi(\cdot,t)| \llcorner \Omega
%= \partial\mathscr{A}(t) \cap \Omega$, there is this time no need
%to pass to a modification of~$\mathscr{A}(t)$) 
%\begin{align}
%\label{eq:MullinsSekerka1Aux}
%\Delta u(\cdot,t) &= 0
%&& \text{in } \Omega \setminus \p\mathscr{A}(t).
%\end{align}
%Furthermore, we claim that
%\begin{equation}
%\begin{aligned}
%\label{eq:MullinsSekerka2Aux}
%V_{\partial\mathscr{A}(t)} &= - \big(n_{\partial\mathscr{A}(t)}\cdot
%\jump{\nabla u(\cdot,t)}\big)n_{\partial\mathscr{A}(t)}
%&& \text{on } \p\mathscr{A}(t) \cap \Omega,
%\\ 
%(n_{\p\Omega} \cdot \nabla) u(\cdot,t) &= 0
%&&\text{on } \p\Omega.
%\end{aligned}
%\end{equation}
%Due to $(\partial_t\chi)(\cdot,t) = -V_{\partial\mathscr{A}(t)} \cdot (\nabla\chi\llcorner\Omega)(\cdot,t)$,
%which itself follows from the assumed smooth evolution of the geometry,
%the identities~\eqref{eq:MullinsSekerka2Aux} immediately follow from~\eqref{eq:evolEquationPhase},
%\eqref{eq:MullinsSekerka1Aux}, and the integration by parts formula~\eqref{eq:defJump}.

Finally, we show
\begin{align}
\label{eq:samePotentials}
\|(\nabla u - \nabla w)(\cdot,t)\|_{L^2(\Omega)} = 0.
\end{align}
Note that~\eqref{eq:samePotentials} is indeed sufficient to conclude
that the smooth $BV$~solution $\chi$ is a classical solution
because we already established~\eqref{eq:MullinsSekerka3Aux},
\eqref{eq:MullinsSekerka2Aux}, \eqref{eq:MullinsSekerka1Aux}
and~\eqref{eq:MullinsSekerka6}. For a proof of~\eqref{eq:samePotentials},
we simply exploit smoothness of the evolution
in combination with~\eqref{eq:energyEvol1},
\eqref{eq:MullinsSekerka3Aux}, \eqref{eq:MullinsSekerka1Aux},
\eqref{eq:MullinsSekerka2Aux}, \eqref{eq:MullinsSekerka6} and~\eqref{eq:defJump}
to obtain
\begin{align*}
\frac{d}{dt} E[\chi(\cdot,t)] = - \int_{\Omega} \nabla u(\cdot,t) \cdot \nabla w(\cdot,t) \,dx.
\end{align*}
Subtracting the previous identity (in integrated form) from~\eqref{eq:DeGiorgiInequality2}
and noting that $E[\chi(\cdot,T) \leq E[\mu_T]$ 
(due to the definitions~\eqref{def:energy} and~\eqref{def:varifoldEnergy} as well as 
the compatibility conditions~\eqref{eq:comp1} and~\eqref{eq:comp2}), we get
\begin{align}
\label{eq:identityEnergies}
0 \leq \int_{0}^{T} \int_{\Omega} \frac{1}{2}|\nabla u-\nabla w|^2 \,dxdt
\leq E[\chi(\cdot,T)] - E[\mu_T] \leq 0
\end{align}
for a.e.\ $T \in (0,T_*)$, which in turn proves~\eqref{eq:samePotentials}.

Note from~\eqref{eq:identityEnergies} that $E[\chi(\cdot,T)] = E[\mu_T]$
for a.e.\ $T \in (0,T_*)$. For general constants $a+b = c+d$, $a\leq c$, 
and $b\leq d$ implies $a=c$ and $b=d$, and we use this with coincidence 
of the varifold and BV energies to find that both (\ref{eq:comp1}) and (\ref{eq:comp2}) 
hold with equality. This implies that~\eqref{eq:identityMassMeasures1} holds. Assuming now that
$\smash{\frac{1}{c_0}}\mu_t^\Omega
\in \mathrm{M}(\overline{\Omega}{\times}\mathbb{S}^{d-1})$ 
is an integer rectifiable oriented varifold, we consider the Radon--Nikod\`ym 
derivative of both sides of (\ref{eq:comp2}) with respect 
to $\mathcal{H}^{d-1}\llcorner \partial \Omega$ to find
\begin{align*}
(\cos\alpha)c_0\chi(\cdot,t)
					= \frac{|\mu_t^{\p\Omega}|_{\mathbb{S}^{d-1}}}{\mathcal{H}^{d-1}\llcorner\partial \Omega}(\cdot,t)
					{+} c_0 m(\cdot,t) ,
\end{align*}
for some integer-valued function $m\colon\partial \Omega \to \mathbb{N}\cup \{0\}.$ 
Necessarily, $m \equiv 0$, concluding~\eqref{eq:identityMassMeasures2}.

\textit{Step 3: Compactness of solution space.}
It is again convenient to work explicitly with potentials. 
More precisely, for each $k \in \mathbb{N}$ we fix a potential~$w_k$
subject to item~\textit{ii)} of Lemma~\ref{lem:PDEinterpretation}
with respect to the varifold solution $(\chi_k,\mu_k)$. By virtue
of~\eqref{eq:DeGiorgiInequality} and~\eqref{eq:normEquivalenceGibbsThomsonPotential}, 
we have for all $k \in \mathbb{N}$				 
\begin{align*}
&E[(\mu_k)_T] {+} \frac{1}{2} \int_{0}^{T} 
\big\|(\partial_t\chi_k)(\cdot,t)\big\|_{H^{-1}_{(0)}}^2 
+ \|w_k(\cdot,t)\|^2_{L^2(\Omega)} \, dt   
\leq E[\mu_{k,0}].
\end{align*}

By assumption, we may select a subsequence $k\to\infty$
such that $\chi_{k,0} \wksto \chi_0$ in $BV(\Omega;\{0,1\})$
to some $\chi_0 \in BV(\Omega;\{0,1\})$. Since we also assumed
tightness of the sequence $(|\nabla\chi_{k,0}|\llcorner\Omega)_{k \in \mathbb{N}}$,
it follows that along the previous subsequence we also
have $|\nabla\chi_{k,0}|(\Omega) \to |\nabla\chi_0|(\Omega)$.
In other words, $\chi_{k,0}$ converges strictly in $BV(\Omega;\{0,1\})$ along 
$k\to\infty$ to $\chi_0$, which in turn implies convergence of
the associated traces in $L^1(\partial\Omega;d\mathcal{H}^{d-1})$.
In summary, we may deduce $E[\mu_{k,0}] = E[\chi_{k,0}]
\to E[\chi_0] = E[\mu_0]$ for the subsequence $k\to\infty$.

For the rest of the argument, a close inspection reveals
that one may simply follow the reasoning from \textit{Step~2}
to \textit{Step~11} of the proof of Theorem~\ref{theo:mainResult}
as these steps do not rely on the actual procedure generating
the sequence of (approximate) solutions but only on consequences
derived from the validity of the associated sharp energy dissipation inequalities. 
\end{proof}

\begin{proof}[Proof of Proposition~\ref{prop:RoegerInterpretation}]
We divide the proof into three steps.

\textit{Proof of item~i):} We start by recalling some notation
from the proof of Theorem~\ref{theo:mainResult}. For $h > 0$,
we denoted by~$\bar{\chi}^h$ the De~Giorgi interpolant~\eqref{eq:minMovVariationalInterpol}.
From the definition~\eqref{eq:approxVarifolds} of the approximate oriented 
varifold $\mu^{\Omega,h} \in \mathrm{M}((0,T_*){\times}\overline{\Omega}{\times}\mathbb{S}^{d-1})$ 
and the measure $|\mu^{\partial\Omega,h}|_{\mathbb{S}^{d-1}} \in \mathrm{M}((0,T_*){\times}\partial\Omega)$,
respectively, and an integration by parts, it then follows that
\begin{align*}
&\int_{(0,T_*) \times \overline{\Omega} \times \mathbb{S}^{d-1}} 
s \cdot \zeta(t)\eta(x) \,d\mu^{\Omega,h}(t,x,s)
\\&
= c_0\int_{0}^{T_*} \zeta(t) \int_{\Omega} 
\frac{\nabla\bar{\chi}^h(\cdot,t)}{|\nabla\bar{\chi}^h(\cdot,t)|}
\cdot \eta(\cdot) \,d|\nabla\bar{\chi}^h(\cdot,t)| dt
\\&
= - c_0\int_{0}^{T_*} \zeta(t) \int_{\Omega} \bar\chi^h(x,t) (\nabla\cdot\eta)(x) \,dx dt
\end{align*}
for all $\eta \in C^\infty(\overline{\Omega};\Rd)$
with $n_{\partial\Omega}\cdot \eta = 0$ on~$\partial\Omega$ and all $\zeta \in C^\infty_c(0,T_*)$, and also
\begin{align*}
&\int_{(0,T_*) \times \overline{\Omega} \times \mathbb{S}^{d-1}} 
s \cdot \zeta(t)\xi(x) \,d\mu^{\Omega,h}(t,x,s)
+ \int_{(0,T_*) \times \partial\Omega} \zeta(t)\,d|\mu^{\partial\Omega,h}|_{\mathbb{S}^{d-1}}(t,x)
\\&= c_0\int_{0}^{T_*} \zeta(t) \int_{\Omega} 
\frac{\nabla\bar{\chi}^h(\cdot,t)}{|\nabla\bar{\chi}^h(\cdot,t)|}
\cdot \xi(\cdot) \,d|\nabla\bar{\chi}^h(\cdot,t)| dt
\\&~~~
+ c_0 \int_{0}^{T_*} \zeta(t) \int_{\partial\Omega}
\bar\chi^h(\cdot,t)\cos\alpha \,d\mathcal{H}^{d-1} dt
\\&
= c_0\int_{0}^{T_*} \zeta(t) \int_{\Omega} 
\frac{\nabla\bar{\chi}^h(\cdot,t)}{|\nabla\bar{\chi}^h(\cdot,t)|}
\cdot \xi(\cdot) \,d|\nabla\bar{\chi}^h(\cdot,t)| dt
\\&~~~
+ c_0 \int_{0}^{T_*} \zeta(t) \int_{\partial\Omega} 
\bar\chi^h(\cdot,t)n_{\partial\Omega}\cdot\xi(\cdot) \,d\mathcal{H}^{d-1} dt
\\&
= - c_0\int_{0}^{T_*} \zeta(t) \int_{\Omega} \bar\chi^h(x,t) (\nabla\cdot\xi)(x) \,dx dt
\end{align*}
for all $\xi \in C^\infty(\overline{\Omega};\Rd))=$
with $n_{\partial\Omega}\cdot\xi= \cos\alpha$ on~$\partial\Omega$ and all $\zeta \in C^\infty_c(0,T_*)$.

It therefore follows from the convergences $\bar\chi^h \to \chi$
in $L^2(0,T_*;L^2(\Omega))$, $\mu^{\Omega,h} \wksto \mathcal{L}^1 \llcorner (0,T_*)
\otimes (\mu^{\Omega}_t)_{t \in (0,T_*)}$ in $\mathrm{M}((0,T_*){\times}\overline{\Omega}
{\times}\mathbb{S}^{d-1})$ and $|\mu^{\partial\Omega,h}|_{\mathbb{S}^{d-1}}
\wksto \mathcal{L}^1 \llcorner (0,T_*) \otimes 
(|\mu^{\partial\Omega}_t|_{\mathbb{S}^{d-1}})_{t \in (0,T_*)}$
in $\mathrm{M}((0,T_*){\times}\partial\Omega)$, respectively,
that first taking the limit in the previous two displays for a
suitable subsequence $h \downarrow 0$ and then undoing
the integration by parts in the respective right-hand sides gives
\begin{align*}
&\int_{0}^{T_*} \zeta(t) \int_{\overline{\Omega} \times \mathbb{S}^{d-1}} 
s \cdot \eta(x) \,d\mu^{\Omega}_t(x,s) dt
\\&
= c_0\int_{0}^{T_*} \zeta(t) \int_{\Omega} 
\frac{\nabla\chi(\cdot,t)}{|\nabla\chi(\cdot,t)|}
\cdot \eta(\cdot) \,d|\nabla\chi(\cdot,t)| dt
\end{align*}
for all $\eta \in C^\infty(\overline{\Omega};\Rd)$
with $n_{\partial\Omega}\cdot\eta = 0$ on~$\partial\Omega$ and all $\zeta \in C^\infty_c(0,T_*)$, and 
\begin{align*}
&\int_{0}^{T_*} \zeta(t) \int_{\overline{\Omega} \times \mathbb{S}^{d-1}} 
s \cdot \xi(x) \,d\mu_t^{\Omega}(x,s) dt
+ \int_{0}^{T_*} \zeta(t) \int_{\partial\Omega} 
1\,d|\mu^{\partial\Omega}_t|_{\mathbb{S}^{d-1}} dt
\\&= c_0\int_{0}^{T_*} \zeta(t) \int_{\Omega} 
\frac{\nabla\chi(\cdot,t)}{|\nabla\chi(\cdot,t)|}
\cdot \xi(\cdot) \,d|\nabla\chi(\cdot,t)| dt
\\&~~~
+ c_0 \int_{0}^{T_*} \zeta(t) \int_{\partial\Omega}
\chi(\cdot,t)\cos\alpha \,d\mathcal{H}^{d-1} dt
\end{align*}
for all $\xi \in C^\infty(\overline{\Omega};\Rd)$
with $n_{\partial\Omega}\cdot \xi= \cos\alpha$ on~$\partial\Omega$ and all $\zeta \in C^\infty_c(0,T_*)$.

The two compatibility conditions~\eqref{eq:comp3} and~\eqref{eq:comp4}
now immediately follow from the previous two equalities and a localization
argument in the time variable.

\textit{Proof of item~ii):} Let $w \in L^2(0,T_*;H^1(\Omega))$
be the potential from item~\textit{ii)} of Lemma~\ref{lem:PDEinterpretation}.
Thanks to Step~7 of the proof of Theorem~\ref{theo:mainResult},
the relation~\eqref{eq:repTangentialFirstVariation} indeed not
only holds for $B \in \mathcal{S}_{\chi(\cdot,t)}$ but also for all 
$B \in C^1(\overline{\Omega};\Rd)$ with $B \cdot n_{\p\Omega}=0$ along~$\p\Omega$.
Hence, due to~\eqref{eq:GibbsThomson} and the trace estimate~\eqref{eq:firstVariationInterior} 
from Proposition~\ref{prop:firstVariationTangential}
for the potential~$w$, it follows
\begin{equation}\label{eqn:curvature=trace2}
c_0 H_\chi(\cdot,t) = w(\cdot,t)
\end{equation} for almost every $t \in (0,T_*)$
up to sets of ($|\nabla\chi(\cdot,t)| \llcorner \Omega$)-measure zero.
The asserted estimate~\eqref{eq:quantitativeIntCurvature} follows in turn
from the trace estimate~\eqref{eq:firstVariationInterior},
the properties~\eqref{eq:normEquivalenceGibbsThomsonPotential} and~\eqref{eq:L2control}, the compatibility condition (\ref{eq:comp1}),
and the energy estimate~\eqref{eq:DeGiorgiInequality}.

\textit{Proof of item~iii):} Post-processing the first variation
estimate~\eqref{eq:boundedFirstVariationGlobally} from 
Corollary~\ref{cor:boundedFirstVariationGlobally} by means
of the trace estimate~\eqref{eq:firstVariationInterior} for $s=2$
and the energy estimate~\eqref{eq:DeGiorgiInequality}
yields~\eqref{eq:quantitativeGlobalFirstVarEstimate}.
\end{proof}

\begin{proof}[Proof of Proposition~\ref{prop:firstVariationTangential}]
We split the proof into three steps. 
\iffalse
The argument is essentially
a combination of ideas due to Sch\"atzle~\cite[Proof of Lemma~2.1]{Schaetzle2001} 
as well as Gr\"uter and Jost~\cite[Proof of Theorem~3.1]{Grueter1986};
see also Kagaya and Tonegawa~\cite[Proof of Theorem 3.2]{Kagaya2017}. 
\fi
In the first and second steps, we develop estimates for an approximation of the $(d-1)$-density of the varifold using ideas introduced by Gr\"uter and Jost~\cite[Proof of Theorem~3.1]{Grueter1986} (see also Kagaya and Tonegawa~\cite[Proof of Theorem 3.2]{Kagaya2017}), that were originally used to derive monotonicity formula for varifolds with integrable curvature near a domain boundary. In the third step, we combine this approach with Sch\"atzle's \cite{Schaetzle2001} work, which derived a monotonicity formula in the interior, to obtain a montonicity formula up to the boundary.

\textit{Step 1: Preliminaries.} Since $\p\Omega$ is compact and of class~$C^2$,
we may choose a localization scale~$r=r(\p\Omega) \in (0,1)$ such that~$\p\Omega$
admits a regular tubular neighborhood of width~$2r$. More precisely, the map
\begin{align*}
\Psi_{\p\Omega} \colon \p\Omega \times (-2r,2r) \to 
\{y \in \Rd \colon \dist(y,\p\Omega) < 2r\},
\quad (x,s) \mapsto x + s\n_{\p\Omega}(x)
\end{align*}
is a $C^1$-diffeomorphism such that $\|\nabla\Psi_{\p\Omega}\|_{L^\infty},
\|\nabla\Psi^{-1}_{\p\Omega}\|_{L^\infty} \leq C$. The inverse
splits in form of $\Psi^{-1}_{\p\Omega}=(P_{\p\Omega},s_{\p\Omega})$,
where $s_{\p\Omega}$ represents the signed distance to~$\p\Omega$
oriented with respect to~$\n_{\p\Omega}$ and~$P_{\p\Omega}$
represents the nearest point projection onto~$\p\Omega$:
$P_{\p\Omega}(x) = x - s_{\p\Omega}(x)\n_{\p\Omega}(P_{\p\Omega}(x))
= x - s_{\p\Omega}(x)(\nabla s_{\p\Omega})(x)$ for all~$x\in\Rd$
such that $\dist(x,\p\Omega)<2r$.

In order to extend the argument of Sch\"atzle~\cite[Proof of Lemma~2.1]{Schaetzle2001} up to the boundary of $\partial \Omega$, we employ the reflection technique of
Gr\"uter and Jost~\cite[Proof of Theorem~3.1]{Grueter1986}. To this end,
we introduce further notation. First, we denote by
\begin{align}
\label{def:reflectedPoint}
\tilde x := 2 P_{\p\Omega}(x) - x,
\quad x \in \Rd \text{ such that } \dist(x,\p\Omega) < 2r
\end{align}
the reflection of the point~$x$ across~$\p\Omega$ in normal direction.
Further, we define the ``reflected ball"
\begin{align}
\label{def:reflectedBall}
\tilde B_\rho(x_0) &:= \{x \in \Rd \colon \dist(x,\p\Omega) < 2r,\, |\tilde x - x_0| < \rho\},
\\&~~~~~~ \nonumber 
\rho \in (0,r), \,x_0 \in \overline{\Omega} \colon \dist(x_0,\p\Omega) < r.
\end{align}
Finally, we set
\begin{align}
\label{def:tangentialCorrection}
\iota_x(y) &:= \big(\mathrm{Id} {-} \n_{\p\Omega}(P_{\p\Omega}(x)) 
\otimes \n_{\p\Omega}(P_{\p\Omega}(x))\big)y
\\&~~~~~~~~~~~~~\nonumber
- \big(y\cdot\n_{\p\Omega}(P_{\p\Omega}(x))\big)\n_{\p\Omega}(P_{\p\Omega}(x)),
\\&~~~~~~ \nonumber
y \in \Rd, \, x \in \Rd \colon \dist(x,\p\Omega) < 2r,
\end{align}
which reflects a vector across the tangent space at $P_{\p\Omega}(x)$ on $\partial \Omega.$

\textit{Step 2: A preliminary monotonicity formula.}
Let $\rho \in (0,r)$ and $x_0 \in \overline{\Omega}$ such that $\dist(x_0,\p\Omega) < r$.
Let $\eta\colon [0,\infty) \to [0,1]$ be smooth and nonincreasing such that
$\eta \equiv 1$ on $[0,\frac{1}{2}]$, $\eta \equiv 0$ on $[1,\infty)$,
and $|\eta'| \leq 4$ on $[0,\infty)$.
Consider then the functional
\begin{align*}
I_{x_0}(\rho) := \int_{\overline{\Omega}} \eta\left(\frac{|x{-}x_0|}{\rho}\right) 
+ \tilde\eta_{x_0,\rho}(x) \,d|\mu|_{\mathbb{S}^{d-1}} \geq 0,
\end{align*}
where $\tilde\eta_{x_0,\rho}\colon\overline{\Omega} \to [0,1]$
represents the $C^1$-function
\begin{align}
\label{eq:noAbuseOfNotation}
\tilde\eta_{x_0,\rho}(x) := 
\begin{cases}
\eta\big(\frac{|\tilde x {-}x_0|}{\rho}\big) &\text{if } \dist(x,\p\Omega) < 2r,
\\
0 & \text{else.}
\end{cases}
\end{align}

A close inspection of the argument given by Kagaya and 
Tonegawa~\cite[Estimate~(4.11) and top of page~151]{Kagaya2017}
reveals that we have the bound
\begin{equation}
\label{eq:monotonicity1}
\begin{aligned}
\frac{d}{d\rho} \big(\rho^{-(d-1)} I_{x_0}(\rho)\big)
&\geq -C\big(\rho^{1-(d-1)}I'_{x_0}(\rho) + \rho^{-(d-1)}I_{x_0}(\rho)\big)
\\&~~~
- \rho^{-d}\int_{\overline{\Omega}} (\mathrm{Id} - s \otimes s) : \nabla \Phi_{x_0,\rho} \,d\mu,
\end{aligned}
\end{equation}
where the test vector field~$\Phi_{x_0,\rho}$ is given by
(recall the definition~\eqref{eq:noAbuseOfNotation} of~$\tilde\eta_{x_0,\rho}$)
\begin{align}
\label{eq:testVectorMonotonicity}
\Phi_{x_0,\rho}(x) := \eta\left(\frac{|x{-}x_0|}{\rho}\right) (x-x_0) 
+ \tilde\eta_{x_0,\rho}(x) \iota_x(\tilde x - x_0).
\end{align}
For any~$q \in [d,\infty)$, we may further post-process~\eqref{eq:monotonicity1} 
by an application of the chain rule to the effect of
\begin{align}
\nonumber
&\frac{d}{d\rho} \big(\rho^{-(d-1)} I_{x_0}(\rho)\big)^\frac{1}{q}
\\ \label{eq:monotonicity2}
&\geq -\frac{C}{q}\big(\rho^{-(d-1)} I_{x_0}(\rho)\big)^{\frac{1}{q}-1}
\big(\rho^{1-(d-1)}I'_{x_0}(\rho) + \rho^{-(d-1)}I_{x_0}(\rho)\big)
\\&~~~ \nonumber
- \frac{1}{q}\big(\rho^{-(d-1)} I_{x_0}(\rho)\big)^{\frac{1}{q}-1} \rho^{-d}
\int_{\overline{\Omega}} (\mathrm{Id} - s \otimes s) : \nabla \Phi_{x_0,\rho} \,d\mu.
\end{align}
Since we do not yet know that the generalized mean curvature vector 
field of the interface~$\supp|\nabla\chi|\llcorner\Omega$
is $q$-integrable, we can not simply proceed as in~\cite[Proof of Theorem~3.1]{Grueter1986}
or~\cite[Proof of Theorem 3.2]{Kagaya2017} (at least with respect to the second right hand side
term of the previous display).

To circumvent this technicality, define $$f(\rho) : =\max\{(\rho^{-(d-1)} I_{x_0}(\rho))^{1/q},1\}.$$ Noting that by choice of $q\geq d$ the first right-hand side term of (\ref{eq:monotonicity2}) is bounded from below by the derivative of the product $\rho^{1-\frac{d-1}{q}}(I_{x_0}(\rho))^{1/q}$, we integrate (\ref{eq:monotonicity2}) over an interval $(\sigma,\tau)$ where $(\rho^{-(d-1)} I_{x_0}(\rho))^{1/q} \geq 1$ to find 
\begin{equation}\label{eq:monotonicity3}
(1+\frac{C}{q}\tau)f(\tau) - (1+\frac{C}{q}\sigma)f(\sigma) \geq - \frac{1}{q} \int_\sigma^\tau \bigg| \rho^{-d} \int_{\overline{\Omega}} 
(\mathrm{Id} - s \otimes s) : \nabla \Phi_{x_0,\rho} \,d\mu \bigg| .
\end{equation}
The same bound trivially holds, over intervals $(\sigma, \tau)$ where $f(\rho) = 1$, and consequently telescoping, we have the montonicity formula (\ref{eq:monotonicity3}) for all $0<\sigma <\tau \ll 1.$

\textit{Step 3: Local trace estimate for the chemical potential.}
We first post-process the preliminary monotonicity formula~\eqref{eq:monotonicity3}
by estimating the associated second right-hand side term involving the first variation.
To this end, we recall for instance from~\cite[p.\ 147]{Kagaya2017} that
the test vector field~$\Phi_{x_0,\rho}$ from~\eqref{eq:testVectorMonotonicity} 
is tangential along~$\p\Omega$. In particular, it represents an admissible choice 
for testing~\eqref{eq:GibbsThomsonAux}:
\begin{align*}
\int_{\overline{\Omega}} (\mathrm{Id} - s \otimes s) : \nabla \Phi_{x_0,\rho} \,d\mu
= \int_{\Omega} \chi \big( w(\nabla \cdot \Phi_{x_0,\rho}) + \Phi_{x_0,\rho} \cdot \nabla w \big) \,dx.
\end{align*}

We distinguish between two cases. If $\rho < \dist(x_0,\p\Omega)$, then
$\tilde\eta_{x_0,\rho} \equiv 0$ and by plugging in~\eqref{eq:testVectorMonotonicity} 
as well as the bounds for~$\eta$
\begin{align*}
\bigg|\rho^{-d}\int_{\Omega} \chi \big( w(\nabla \cdot \Phi_{x_0,\rho}) 
+ \Phi_{x_0,\rho} \cdot \nabla w \big) \,dx\bigg|
\leq C\rho^{-d} \int_{\Omega \cap B_{\rho}(x_0)} \rho|\nabla w| + |w| \,dx. 
\end{align*}
If instead $\rho \geq \dist(x_0,\p\Omega)$, straightforward arguments show
\begin{align*}
|\tilde\eta_{x_0,\rho} \iota_x (\tilde{x} - x_0)| &\leq C\rho,
\\
|\nabla\tilde\eta_{x_0,\rho}| |\iota_{x}(\tilde x {-} x_0)| &\leq C,
\\
|\nabla \iota_x(\tilde x {-} x_0)| &\leq C,
\\
\supp \tilde\eta_{x_0,\rho} \subset \tilde B_{\rho}(x_0) &\subset B_{5\rho}(x_0),
\end{align*}
and therefore for $\rho \geq \dist(x_0,\p\Omega)$
\begin{align*}
\bigg|\rho^{-d}\int_{\Omega} \chi \big( w(\nabla \cdot \Phi_{x_0,\rho}) 
+ \Phi_{x_0,\rho} \cdot \nabla w \big) \,dx\bigg|
\leq C\rho^{-d} \int_{\Omega \cap B_{5\rho}(x_0)} \rho|\nabla w| + |w| \,dx. 
\end{align*}

To control the right-hand side of the display we argue for dimension $d=3$ and note that embeddings are stronger in dimension $d=2$ (see also~\cite[Proof of Lemma~2.1]{Schaetzle2001}).
Extending~$w$ in $H^{1}(\Omega)$ to a function
in $H^{1}(\{x\colon\dist(x,\p\Omega)<2r\}\cup \Omega)$, we let $2^* = 6$ be the dimension dependent Sobolev exponent and apply H\"older's inequality to find
\begin{equation}\nonumber
\begin{aligned}
\rho^{-d} \int_{\Omega \cap B_{5\rho}(x_0)} \rho|\nabla w| + |w| \,dx& \leq \rho^{1-d/2} \|\nabla w\|_{L^2(B_{5\rho}(x_0))} + \rho^{-d/2^*} \|w\|_{L^{2^{*}}(B_{5\rho}(x_0))} \\
&\leq C\rho^{-1/2}\|w\|_{H^1(\Omega)},
\end{aligned}
\end{equation}
as the exponents on $\rho$ coincide.

Thus for all $0<\rho<\frac{r}{5}$ due to the previous case study and estimate
above
\begin{align}
\label{eq:curvatureEstimate}
\bigg| \rho^{-d} \int_{\overline{\Omega}} (\mathrm{Id} - s \otimes s) : \nabla \Phi_{x_0,\rho} \,d\mu \bigg|
\leq C\|w\|_{H^{1}(\Omega)}\rho^{\beta-1}
\end{align}
for some $\beta \in (0,1)$ (accounting also for $d=2$). Inserting~\eqref{eq:curvatureEstimate} back into~\eqref{eq:monotonicity3} finally yields that the function
\begin{align*}
\rho &\mapsto (1+\frac{C}{q}\rho)\max\Big\{\big(\rho^{-(d-1)} I_{x_0}(\rho)\big)^\frac{1}{q}, 1\Big\}+ C\beta^{-1}\|w\|_{H^{1}(\Omega)}\rho^{\beta}
\end{align*}
is nondecreasing in $(0,\frac{r}{5})$. In particular, since $\eta\equiv 1$ on $[0,\frac{1}{2}]$, we obtain one-sided Alhfor's regularity for the varifold as
\begin{equation}\label{eqn:upperAhlfors}
\begin{aligned}
\sup_{x_0 \in \overline{\Omega}:\,\dist(x_0,\p\Omega)<r}
&\sup_{0<\rho<\frac{r}{5}} \Big(\frac{\rho}{2}\Big)^{-(d-1)}
|\mu|_{\mathbb{S}^{d-1}}\big(\overline{\Omega}\cap B_{\frac{\rho}{2}}(x_0)\big)
\\&
\leq C_{q,r,d}\big(1{+}\max\big\{|\mu|_{\mathbb{S}^{d-1}}(\overline{\Omega}),
\|w\|_{H^{1}(\Omega)}^q\big\}\big)
\end{aligned}
\end{equation}
for some $C_{q,r,d} \geq 1$ for all $q \geq d$. The estimate (\ref{eqn:upperAhlfors}) is sufficient to apply the trace theory (as in \cite{meyersZiemer1977}) for the BV function $|w|^s$, and the asserted local
estimate for the $L^s$-norm of the trace of the potential~$w$
on~$\supp|\mu|_{\mathbb{S}^{d-1}}$ now follows as 
in Sch\"atzle~\cite[Proof of Theorem~1.3]{Schaetzle2001}. 
\end{proof}

\begin{proof}[Proof of Corollary~\ref{cor:boundedFirstVariationGlobally}]
In view of the Gibbs--Thomson law~\eqref{eq:GibbsThomsonAux}
and by defining the Radon--Nikod\`ym derivative $\rho^{\Omega} := 
\smash{\frac{c_0|\nabla\chi|\llcorner\Omega}
{|\mu|_{\mathbb{S}^{d-1}}\llcorner\Omega}}\in [0,1]$, we recall the fact
that $H^\Omega:=\rho^{\Omega} \smash{\frac{w}{c_0}} 
\smash{\frac{\nabla\chi}{|\nabla\chi|}} \in L^1(\overline{\Omega},d|\mu|_{\mathbb{S}^{d-1}})$ 
represents the generalized mean curvature
vector of~$\mu$ with respect to tangential variations, i.e.,
\begin{align*}
\delta \mu(B) = \int_{\overline{\Omega} {\times} \Sph}
(\mathrm{Id} {-} s\otimes s) : \nabla B \,d\mu
= - \int_{\overline{\Omega}} H^\Omega \cdot B \,d|\mu|_{\mathbb{S}^{d-1}}
\end{align*}
for all $B \in C^1(\overline{\Omega})$, 
$(B \cdot n_{\partial\Omega})|_{\partial\Omega} \equiv 0$.
A recent result of De~Masi~\cite[Theorem~1.1]{DeMasi2021} therefore ensures 
that there exists $H^{\partial\Omega} \in L^\infty(\partial\Omega,d|\mu|_{\mathbb{S}^{d-1}})$
with the property that $H^{\partial\Omega}(x) \perp \mathrm{Tan}_x\partial\Omega$
for $|\mu|_{\mathbb{S}^{d-1}}\llcorner\partial\Omega\text{-a.e.\ } x \in \overline{\Omega}$,
and a bounded Radon measure $\sigma_{\mu} \in \mathrm{M}(\partial\Omega)$
such that
\begin{align*}
\delta \mu(B) = - \int_{\overline{\Omega}} (H^\Omega {+} H^{\partial\Omega}) \cdot B \,d|\mu|_{\mathbb{S}^{d-1}}
+ \int_{\partial\Omega} B \cdot n_{\partial\Omega} \,d\sigma_{\mu}
\end{align*}
for all ``normal variations'' $B \in C^1(\overline{\Omega})$ in the sense that
$B(x) \perp \mathrm{Tan}_x\partial\Omega$ for all $x \in \partial\Omega$.
There moreover exists a constant $C=C(\Omega)>0$ (depending only on the second fundamental
form of the domain boundary~$\partial\Omega$) such that
\begin{align*}
\|H^{\partial\Omega}\|_{L^\infty(\partial\Omega,d|\mu|_{\mathbb{S}^{d-1}})} &\leq C,
\\
\sigma_{\mu}(\partial\Omega) &\leq C|\mu|_{\mathbb{S}^{d-1}}(\overline{\Omega}) 
+ \|H^\Omega\|_{L^1(\overline{\Omega},d|\mu|_{\mathbb{S}^{d-1}})}.
\end{align*}
In particular, by a splitting argument into ``tangential''
and ``normal components'' of a general variation $B \in C^1(\overline{\Omega})$,
we deduce that the varifold~$\mu$ is of bounded first variation in~$\overline{\Omega}$ 
with representation~\eqref{eq:repFirstVariationGlobally}.
The asserted bounds~\eqref{eq:boundedFirstVariationGlobally}--\eqref{eq:estimateBoundaryMeasure}
are finally consequences of the two bounds from the previous
display, the representation of the first variation from~\eqref{eq:repFirstVariationGlobally},
and the definition of~$H^\Omega$.
\end{proof}

%%%%%%%%%%%%%%%%%%%%%%%%%%%%%%%%%%%%%%%%%%%%%%%%%%%%%%%%%%%%%%%%%%%%%%%%%%%%%%%%%%%%%%%%%%%%%%%%
\section*{Acknowledgements}
The authors thank Helmut Abels for pointing out his paper \cite{AbelsRoeger} to them and Olli Saari for insightful discussion. 
This project has received funding from the Deutsche Forschungsgemeinschaft 
(DFG, German Research Foundation) under Germany's Excellence 
Strategy -- EXC-2047/1 -- 390685813.

\bibliographystyle{abbrv}
\bibliography{mullins_sekerka_existence}

\end{document}